\documentclass[a4paper,11pt, dvipsnames]{article} 
\usepackage{fourier} 
\usepackage{amssymb}
\usepackage{amsmath}
\usepackage{amsthm}
\usepackage[margin=1.30in]{geometry}
\usepackage[all]{xy}
\usepackage[english]{babel}
\usepackage{tikz}
\usepackage{tikz-cd}
\usetikzlibrary{matrix}
\usetikzlibrary{calc,intersections}
\usepackage{url}
\usepackage[shortlabels]{enumitem}
\usepackage{multirow}

\usepackage{hyperref}
\usepackage{color}
\usepackage{marvosym} 

\theoremstyle{plain}
\newtheorem{introtheorem}{Theorem}

\newtheorem{introprop}[introtheorem]{Proposition}
\newtheorem{thm}{Theorem}[section]
\newtheorem{prop}[thm]{Proposition}
\newtheorem{lemma}[thm]{Lemma}
\newtheorem{cor}[thm]{Corollary}
\theoremstyle{definition}
\newtheorem{df}[thm]{Definition}
\newtheorem{notn}[thm]{Notation}
\newtheorem*{notn*}{Notation}

\newtheorem{remark}[thm]{Remark}

\newtheorem{example}[thm]{Example}

\DeclareMathOperator{\AV}{AV}
\DeclareMathOperator{\End}{End}
\DeclareMathOperator{\Hom}{Hom}

\DeclareMathOperator{\Tr}{Tr}
\DeclareMathOperator{\Pic}{Pic}
\DeclareMathOperator{\ord}{ord}

\def\Q{\mathbb{Q}}
\def\Z{\mathbb{Z}}
\def\C{\mathbb{C}}
\def\R{\mathbb{R}}
\def\F{\mathbb{F}}
\def\p{\mathfrak{p}}

\newcommand{\cA}{\mathcal{A}}

\newcommand{\cF}{\mathcal{F}}
\newcommand{\cG}{\mathcal{G}}
\newcommand{\cI}{\mathcal{I}}
\newcommand{\cL}{\mathcal{L}}
\newcommand{\cM}{\mathcal{M}}
\newcommand{\cO}{\mathcal{O}}

\newcommand{\cR}{\mathcal{R}}
\newcommand{\cW}{\mathcal{W}}

\newcommand{\into}{\hookrightarrow}
\newcommand{\vphi}{\varphi}
\newcommand{\set}[1]{\left\lbrace#1\right\rbrace }
\newcommand{\AVcs}[1]{\AV^{\text{cs}}({#1})}
\newcommand{\Mcs}[1]{\cM^{\text{cs}}({#1})}
\newcommand{\Palpha}[2]{\mathcal{P}^{\alpha}_{{#1}}({#2})}
\newcommand{\Pone}[2]{\mathcal{P}^{1}_{{#1}}({#2})}

\renewcommand{\bar}{\overline}

\title{Polarizations of abelian varieties over finite fields \\ via canonical liftings}
\author{Jonas Bergstr\"om, Valentijn Karemaker, and Stefano Marseglia}
\date{\today}

\AtEndDocument{\bigskip{\footnotesize\noindent
    \textsc{
    Matematiska institutionen, Stockholms Universitet, SE-106 91 Stockholm, Sweden
    }\\
    \textit{E-mail address}: 
    \text{jonasb@math.su.se}\\
    \\
    \textsc{
    Mathematisch Instituut, Universiteit Utrecht, Postbus 80010, 3508 TA Utrecht,
    The Netherlands
    }\\
    \textit{E-mail address}: 
    \text{v.z.karemaker@uu.nl}\\
    \\
    \textsc{
    Mathematisch Instituut, Universiteit Utrecht, Postbus 80010, 3508 TA Utrecht,
    The Netherlands
    }\\
    \textit{E-mail address}: 
    \text{s.marseglia@uu.nl}
}}

\begin{document}
\maketitle

\begin{abstract}
We describe all polarizations for all abelian varieties over a finite field in a fixed isogeny class corresponding to a squarefree Weil polynomial, when one variety in the isogeny class admits a canonical lifting to characteristic zero, i.e., a lifting for which the reduction morphism induces an isomorphism of endomorphism rings. 
Categorical equivalences between abelian varieties over finite fields and fractional ideals in {\'e}tale algebras enable us to explicitly compute isomorphism classes of polarized abelian varieties satisfying some mild conditions. We also implement algorithms to perform these computations.
\end{abstract}

\section{Introduction}

Polarizations, and in particular principal polarizations, play a key role in the study of abelian varieties. 
Elliptic curves are for instance canonically principally polarized. Among higher-dimensional varieties, the same holds true for Jacobians of curves, which are extensively studied. But while any abelian variety admits a polarization of some degree, the existence and abundance of polarizations of a prescribed degree are less clear.

The extra data provided by polarizations introduce a geometric rigidification which is necessary to define and study the moduli space $\mathcal{A}_{g,d}$ of $g$-dimensional abelian varieties with a polarization of degree $d^2$. 

Over any field, polarizations can be defined as isogenies from the abelian variety to its dual that are induced from ample line bundles. Although very precise, this definition in general does not translate into an explicit description.
Over the complex numbers however, and by extension over any field of characteristic zero, the definition can be made explicit in terms of Riemann forms, which is very convenient for performing computations. Such a description is currently lacking in positive characteristic.

As a result, there are many open problems about polarized abelian varieties in characteristic~$p$,
such as determining which isogeny classes over finite fields that are principally polarizable, and also about their moduli spaces $\mathcal{A}_{g,d}\otimes \F_p$, such as determining the Hecke orbits, maximal subvarieties, and Newton polygon strata.\\

In this paper we aim to \emph{determine} and \emph{compute} polarizations for abelian varieties over finite fields. 
When such a variety lifts to characteristic zero, in such a way that all of its endomorphisms also lift, we again have the full description of polarizations in characteristic zero at our disposal. This type of lifting is called a canonical lifting.

This idea has been exploited by Howe \cite{Howe95}, building on work by Deligne \cite{Del69}, to describe polarizations for \emph{ordinary} abelian varieties; that is, abelian varieties~$A$ with $p$-rank equal to $\dim(A)$. Namely, it follows from Serre-Tate theory that every ordinary abelian variety over a finite field has a canonical lifting. Under some assumptions, the same holds for \emph{almost-ordinary} abelian varieties~$A$, that is, those whose $p$-rank equals  $\dim(A)-1$,  see Oswal-Shankar \cite{OswalShankar19}. We generalize their results in~Proposition~\ref{prop:almord_lift} and Theorem~\ref{thm:OS}. Using these results, we compute isomorphism classes of principally polarized almost-ordinary abelian varieties in Example
\ref{ex:nonord2}.

It is known that abelian varieties of lower $p$-rank generally do not have canonical liftings. Our main theorem (Theorem~\ref{thm:main1}, paraphrased as Theorem~\ref{thm:introA} below) nevertheless gives a complete description of all polarizations (up to polarized isomorphisms) of abelian varieties over a finite field in a fixed isogeny class, as long as there exists at least one variety in this isogeny class which admits a canonical lifting. 
This description crucially relies on an identification between the polarizations in characteristics zero and $p$, which is expressed in terms of a totally real unit element $\alpha$ of the endomorphism ring of the variety.

In order to effectively and algorithmically compute the polarizations, we further need a functorial description of the fixed isogeny class in terms of computable algebraic objects. 
The results of this paper are built on the equivalence of categories provided by Centeleghe-Stix~\cite{CentelegheStix15}. This equivalence is used in Theorem~\ref{thm:introA}, but notably also in Proposition~\ref{prop:comppol}, paraphrased here as Proposition~\ref{prop:introB}.

\begin{introtheorem}\label{thm:introA}
Fix an isogeny class over $\mathbb{F}_p$ corresponding to a squarefree Weil polynomial $h$ and a CM-type $\Phi$ for the CM-algebra $L = \mathbb{Q}[x]/(h)$. Suppose that the isogeny class contains an abelian variety $A_0$, which admits a canonical lifting with CM by $L$ through $\Phi$ over a $p$-adic field, and whose endomorphism ring $S = \End(A_0)$ is Gorenstein and stable under the canonical involution of $L$, i.e., $S = \overline{S}$. There exists a totally real unit element $\alpha \in S^*$ with the following property: 
If $B_0$ is any other abelian variety in the isogeny class with dual $B_0^{\vee}$, let $\mu: B_0 \to B_0^{\vee}$ be any isogeny and $\lambda\in L$ its image under the Centeleghe-Stix equivalence.
Then $\mu$ is a polarization if and only if $\gamma = \alpha^{-1} \lambda$ satisfies $\overline{\gamma} = -\gamma$ (``totally imaginary'') and $\Im(\varphi(\gamma)) > 0$ for every $\varphi \in \Phi$ (``$\Phi$-positive'').
\end{introtheorem}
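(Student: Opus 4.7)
The plan is to transport the question to characteristic zero via the canonical lifting, invoke classical CM theory there, and then compare the resulting parametrization of polarizations with the one coming from the Centeleghe-Stix equivalence in characteristic $p$. The element $\alpha$ will capture the discrepancy between the two parametrizations.

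First, let $\tilde A_0$ denote the canonical lift of $A_0$ to a $p$-adic field $K$. By hypothesis, $\tilde A_0$ has CM by $L$ through $\Phi$ and its endomorphism ring coincides with $S$. Standard results on N\'eron models (or directly the Serre--Tate deformation theorem) imply that reduction induces a bijection
\begin{equation*}
\Hom(\tilde A_0, \tilde A_0^{\vee}) \xrightarrow{\sim} \Hom(A_0, A_0^{\vee})
\end{equation*}
which sends polarizations to polarizations of the same degree. Second, since $\tilde A_0$ is a CM abelian variety in characteristic zero, the classical theory of Riemann forms identifies $\Hom(\tilde A_0, \tilde A_0^{\vee}) \otimes \Q$ with $L$ in such a way that the polarizations correspond exactly to those elements which are totally imaginary and $\Phi$-positive and lie in an integrality lattice determined by $\Phi$.

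Next, the Centeleghe-Stix equivalence provides its own $L$-equivariant bijection between $\Hom(A_0, A_0^{\vee}) \otimes \Q$ and $L$. Two such $L$-linear identifications must differ by multiplication by a single unit $\alpha \in L^*$. The Gorenstein hypothesis together with $S = \bar S$ ensures that both parametrizations send the integral lattice $\Hom(A_0, A_0^{\vee})$ to isomorphic $S$-submodules of $L$, so $\alpha$ can be chosen in $S^*$. The fact that $\alpha$ is totally real follows from comparing how duality acts on both sides: polarizations are symmetric isogenies, a condition which on the CM side reads $\bar\beta = -\beta$ and on the Centeleghe-Stix side reads $\bar\lambda = -\lambda$ with a compatible sign. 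Combining these two symmetries forces $\bar\alpha = \alpha$.

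Finally, for an arbitrary $B_0$ in the isogeny class, one uses that $B_0$ is isogenous to $A_0$: an isogeny $B_0 \to A_0$ transports polarizations and is compatible with the Centeleghe-Stix functor, so the criterion for an isogeny $\mu \colon B_0 \to B_0^{\vee}$ reduces to the criterion for $A_0$, while the element $\alpha$ itself remains unchanged (it depends only on the CM data, not on the choice of $B_0$). The main obstacle is the comparison in the third paragraph: one must track precisely how Centeleghe-Stix describes the dual abelian variety (via a trace-dual fractional ideal twisted by the canonical involution) and match this with the classical CM description of $\tilde A_0^{\vee}$. The Gorenstein hypothesis is crucial here, since it guarantees that the relevant fractional $S$-ideals are invertible, which is what allows the comparison to be performed at the integral level and produces $\alpha \in S^*$ rather than merely in $L^*$.
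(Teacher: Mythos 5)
Your proposal follows essentially the same route as the paper: lift $A_0$ to a $p$-adic field via the canonical lifting, characterize polarizations in characteristic zero via Riemann forms, and define $\alpha$ as the discrepancy between the complex-uniformization parametrization of $\Hom_L(A_0, A_0^{\vee})$ and the one coming from the Centeleghe--Stix functor (this is Proposition~3.5/\ref{prop:redhomS} in the paper). Your remaining steps---showing $\alpha \in S^*$ via the Gorenstein hypothesis, deducing $\bar\alpha = \alpha$ by comparing how duality interacts with both parametrizations (Proposition~\ref{prop:betteralpha}), and transporting the criterion to an arbitrary $B_0$ via an isogeny $f$ using $\cG(f^*\mu) = \cG(f)\overline{\cG(f)}\,\cG(\mu)$ (Proposition~\ref{prop:dual} and Lemma~\ref{lem:isogsymm})---match the paper's argument closely.
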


We note here that we need the assumptions on the endomorphism ring $S$ to ensure that~$\alpha$ is not just any invertible element in $L$ but that it is contained in $S^*$, which is important for the following proposition and theorems. 

\begin{introprop}\label{prop:introB}
We use the notation and assumptions of Theorem~\ref{thm:introA}. To any endomorphism ring $T$ of an abelian variety isogenous to $A_0$ we can associate a finite set~$\mathcal{T} \subseteq L$. Suppose that we have an abelian variety~$B_0$ isogenous to $A_0$, with endomorphism ring $T$. If $B_0$ is isomorphic to its dual, then there exists an element $i_0 \in L^*$ such that the image under the Centeleghe-Stix equivalence of a complete set of representatives of principal polarizations of~$B_0$ up to isomorphism is the set 
\begin{equation}\label{eq:introprobB}
    \{ i_0  u: u \in \mathcal{T} \text{ such that } \alpha^{-1} i_0 u \text{ is totally imaginary and $\Phi$-positive } \}.
\end{equation}
Moreover, the element $i_0$ and finite set $\mathcal{T}$ can be explicitly computed.
\end{introprop}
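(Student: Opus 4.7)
My plan is to combine the Centeleghe-Stix equivalence with the criterion from Theorem~\ref{thm:introA}. First I would describe all isogenies $B_0 \to B_0^\vee$. Under the equivalence, $\Hom(B_0, B_0^\vee)$ embeds as a $T$-submodule of $L$, and since $B_0 \cong B_0^\vee$ as unpolarized abelian varieties this module is free of rank one over $T$. Fixing one such isomorphism $B_0 \to B_0^\vee$ and letting $i_0 \in L^*$ be its image in $L$, we see that isogenies $B_0 \to B_0^\vee$ correspond to $i_0 T \subseteq L$ and isomorphisms to $i_0 T^*$. Theorem~\ref{thm:introA} then identifies the principal polarizations among these as the elements $i_0 u$ with $u \in T^*$ such that $\alpha^{-1} i_0 u$ is totally imaginary and $\Phi$-positive.

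Next I would quotient by polarized isomorphism. Two principal polarizations $\mu_1, \mu_2$ on $B_0$ are isomorphic as polarized varieties if and only if there exists $f \in T^*$ with $\mu_1 = f^\vee \circ \mu_2 \circ f$. Under Centeleghe-Stix the endomorphism $f$ corresponds to a unit $v \in T^*$, and by the compatibility of the equivalence with duality, $f^\vee$ corresponds to the canonical involution $\bar{v}$. The relation on the $L$-side therefore reads $\lambda_1 = v\bar{v}\,\lambda_2$, so principal polarizations on $B_0$ up to isomorphism correspond to the quotient $T^*/\{v\bar{v} : v \in T^*\}$ cut out by the sign conditions. Letting $\mathcal{T} \subseteq L$ be a choice of representatives in $T^*$ for this quotient — a set depending only on $T$ — yields the description \eqref{eq:introprobB}.

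To finish I would verify finiteness and computability. The subgroup $\{v\bar{v} : v \in T^*\} \subseteq T^*$ is the image of the norm-type map $v \mapsto v\bar{v}$, whose kernel consists of units $v$ with $v\bar{v} = 1$; these are roots of unity in $L$ and hence form a finite group. Since $T^*$ is finitely generated, as the unit group of an order in the étale algebra $L$, the domain and image have the same free rank, so $\mathcal{T}$ is finite. The element $i_0$ can be computed by finding an isomorphism between the fractional ideals attached by Centeleghe-Stix to $B_0$ and $B_0^\vee$, while $\mathcal{T}$ is produced by computing $T^*$ and reducing modulo the norm subgroup — both standard algorithms for orders in étale algebras.

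The step I expect to be the main obstacle is the careful verification, within the Centeleghe-Stix formalism, that abelian-variety duality corresponds to the canonical involution $v \mapsto \bar{v}$ on $L$, and that under this identification $\Hom(B_0, B_0^\vee)$ sits inside $L$ precisely as $i_0 T$. Once this dictionary between the geometric and algebraic sides is in place, the remainder of the argument is clean algebraic bookkeeping.
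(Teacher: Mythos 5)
Your proposal is correct and follows essentially the same route as the paper: Theorem~\ref{thm:introA} (i.e.\ Theorem~\ref{thm:main1}) detects which isogenies $B_0 \to B_0^\vee$ are polarizations, Proposition~\ref{prop:dual}/Proposition~\ref{prop:isompol} identifies polarized isomorphism with the relation $\lambda \sim v\bar v\,\lambda$, and $\mathcal{T}$ is exactly a transversal of $T^*/\langle v\bar v : v\in T^*\rangle$. The only substantive addition over the paper's proof of Proposition~\ref{prop:comppol} is that you spell out why this quotient is finite (finite kernel of the norm map plus finite generation of $T^*$), which the paper states without proof.
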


Using the Centeleghe-Stix equivalence, combined with a construction due to the third author \cite{MarAbVar18}, we show how to describe the abelian varieties (and their polarizations) in a \emph{squarefree} isogeny class (i.e., abelian varieties whose characteristic polynomial of Frobenius has no repeated factors) in terms of fractional ideals in an {\'e}tale algebra over~$\Q$.
This functorial description has the benefit of allowing many more isogeny classes than those considered in~\cite{Del69}, \cite{Howe95} and~\cite{OswalShankar19}, but has the drawback that it only applies to varieties defined over the prime field $\F_p$. In future work, we plan to investigate how to extend our results to abelian varieties defined over more general finite fields. And while we restrict ourselves to squarefree isogeny classes in the current paper, mostly to simplify the computations, we also aim to study how to generalize the results to other isogeny classes.

Via complex uniformization, abelian varieties (and their polarizations) in characteristic zero can also be described by fractional ideals; comparing these with the ideals obtained through the categorical equivalence is done using the element $\alpha$ (in Theorem~\ref{thm:introA}) which can pose an obstruction to explicit computations. Notably, Example~\ref{ex:x8+16} shows that such an obstruction actually occurs. In Subsection~\ref{ssec:polcomp} we describe exactly when $\alpha = 1$ or when we can compute polarizations regardless of its value. 
The main result in Subsection~\ref{ssec:polcomp} is Theorem \ref{thm:eff}, stated here in a less general form:
\begin{introtheorem}\label{eq:introthmC}
We use the notation and assumptions of Proposition~\ref{prop:introB}.
Denote by $S^*_\R$ (resp.~$T^*_\R$) the group of totally real units of $S$ (resp.~$T$).
If $S^*_\R\subseteq T^*_\R$, then the set in Equation~\eqref{eq:introprobB} is in bijection with the set
\begin{equation*}
    \{ i_0 u: u \in \mathcal{T} \text{ such that } i_0 u \text{ is totally imaginary and $\Phi$-positive } \},
\end{equation*}
which does not depend on $\alpha$.
\end{introtheorem}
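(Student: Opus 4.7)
The plan is to exhibit a bijection between the two sets via \emph{multiplication by $\alpha$}, which descends to a well-defined operation on $\mathcal{T}$ up to the equivalence implicit in its definition. The hypothesis $S^*_\R\subseteq T^*_\R$ is exactly what is needed: by Theorem~\ref{thm:introA}, $\alpha\in S^*_\R$, so under the hypothesis $\alpha\in T^*_\R\subseteq T^*$, i.e.\ $\alpha$ is a totally real unit of $T$.

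First I would confirm that multiplication by $\alpha$ is well-defined on $\mathcal{T}$. By Proposition~\ref{prop:introB}, the map $u\mapsto i_0 u$ sends the subset of $\mathcal{T}$ cut out by the totally imaginary and $\Phi$-positivity conditions to a set of representatives of isomorphism classes of principal polarizations on $B_0$. Under the Centeleghe--Stix equivalence, two elements $u_1,u_2$ give isomorphic polarizations iff $u_1/u_2$ is of the form $\gamma\bar\gamma$ for some $\gamma\in T^*$ (automorphisms of $B_0$ correspond to $T^*$, and acting on a polarization $\lambda$ gives $\gamma\bar\gamma\lambda$). This equivalence is preserved by multiplication by the unit $\alpha\in T^*$, and therefore induces a bijection $\bar f\colon \mathcal{T}\to\mathcal{T}$ sending $u$ to the unique $u'\in\mathcal{T}$ with $u'=\gamma\bar\gamma\alpha u$ for some $\gamma\in T^*$.

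Next I would verify that $\bar f$ restricts to a bijection between the indexing sets of the two sets in the statement. Take $u$ such that $i_0 u$ lies in the set of Equation~\eqref{eq:introprobB}, i.e.\ $\alpha^{-1}i_0 u$ is totally imaginary and $\Phi$-positive, and set $u'=\bar f(u)$. Then $i_0 u'=\gamma\bar\gamma\alpha\cdot i_0 u$; since $\gamma\bar\gamma\alpha$ is totally real and $i_0 u=\alpha\cdot(\alpha^{-1}i_0 u)$ is totally imaginary, $i_0 u'$ is totally imaginary. For $\Phi$-positivity, using that $\alpha$ is totally real yields $\Im(\varphi(i_0 u))=\varphi(\alpha)\Im(\varphi(\alpha^{-1}i_0 u))$, so
\[
\Im(\varphi(i_0 u')) = \varphi(\gamma\bar\gamma)\cdot\varphi(\alpha)^2\cdot\Im(\varphi(\alpha^{-1}i_0 u)),
\]
a product of three positive reals. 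Hence $i_0 u'$ is in the second set. The reverse direction is symmetric, using $\bar f^{-1}$, induced by multiplication by $\alpha^{-1}\in T^*$.

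The main obstacle is the bookkeeping at the start: identifying the equivalence on $\mathcal{T}$ as being defined by norms $\gamma\bar\gamma$ for $\gamma\in T^*$, and checking that $\alpha\in T^*$ suffices to make $\bar f$ well-defined (this being the content of the hypothesis $S^*_\R\subseteq T^*_\R$, since a priori $\alpha$ is only known to lie in $S^*$). Once this is in place, the sign calculation above is routine, and independence of the second set from $\alpha$ is immediate from its definition.
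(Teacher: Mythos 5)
Your proof is correct and takes essentially the same approach as the paper's. The paper (Theorem~\ref{thm:eff}) factors the implication through the intermediate condition ``for every $\xi\in S^*_\R$, $\xi L^+\cap\mathcal{T}\neq\emptyset$'' — chosen because it can be checked algorithmically even when $S^*_\R\subseteq T^*_\R$ fails — and then realizes the bijection by multiplying by a representative $u'\in\mathcal{T}$ of the class of $\alpha^{-1}$ and replacing $\mathcal{T}$ with the other transversal $u'\mathcal{T}$; you instead multiply by $\alpha$ directly and reduce back into $\mathcal{T}$ via the induced self-bijection $\bar f$, which is the same calculation in slightly different bookkeeping.
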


Sometimes, even when the hypotheses of Theorem \ref{eq:introthmC} (or Theorem \ref{thm:eff}) do not hold, we can still compute the number of isomorphism classes of principally polarized abelian varieties with a given endomorphism ring; for instance, this happens
when the numerical outcomes from all different CM-types agree.
We refer the reader to Theorem \ref{thm:sameoutput}, which is simplified here as Theorem \ref{eq:introthmD}:
\begin{introtheorem}\label{eq:introthmD}
We use the notation and assumptions of Proposition~\ref{prop:introB}.
Assume that there are~$r$ isomorphism classes of abelian varieties 
isogenous to $A_0$ with endomorphism ring $T$, represented by abelian varieties $B_1, \ldots, B_r$. If each $B_k$, $k=1, \ldots, r$, is isomorphic to its dual, there exist elements $i_k \in L^*$ as in Proposition~\ref{prop:introB}. 
For any CM-type $\Phi'$, we put
\[   \Pone{\Phi'}{B_k}=\{ i_k  u: u \in \mathcal{T} \text{ such that } i_k u \text{ is totally imaginary and $\Phi'$-positive } \}. \]
If there exists a non-negative integer $N$ such that for every CM-type $\Phi'$ we have
\[
    \vert \Pone{\Phi'}{B_1} \vert + \ldots + \vert \Pone{\Phi'}{B_r} \vert = N,
\]
then there are exactly $N$ isomorphism classes of principally polarized abelian varieties with endomorphism ring $T$. 
\end{introtheorem}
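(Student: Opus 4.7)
The plan is to reduce the theorem to an application of Proposition~\ref{prop:introB} combined with a ``CM-type twist'' trick that absorbs the totally real unit $\alpha$ into the choice of CM-type.

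First, I would note that every principally polarized abelian variety $(B,\mu)$ with endomorphism ring $T$ in the isogeny class is isomorphic (as a principally polarized variety) to some $(B_k,\mu_k)$ for a unique $k\in\{1,\ldots,r\}$, since the $B_k$ exhaust the isomorphism classes of abelian varieties with endomorphism ring $T$. Hence the count we want is exactly
\[
\sum_{k=1}^r \#\{\text{principal polarizations of } B_k\text{ up to isomorphism}\}.
\]
By Proposition~\ref{prop:introB}, this sum equals $\sum_{k=1}^r \#\mathcal{P}_k$, where
\[
\mathcal{P}_k = \{\, i_k u : u\in\mathcal{T},\ \alpha^{-1} i_k u \text{ is totally imaginary and } \Phi\text{-positive}\,\}.
\]

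Next, the key observation is that multiplication by the totally real unit $\alpha^{-1}\in L^*$ preserves the space of totally imaginary elements (since $\overline{\alpha^{-1}\xi}=\alpha^{-1}\overline{\xi}$), so $\alpha^{-1} i_k u$ is totally imaginary if and only if $i_k u$ is. For the positivity condition, since $\varphi(\alpha)\in\R^*$ for every embedding $\varphi:L\hookrightarrow\C$, the sign of $\Im(\varphi(\alpha^{-1} i_k u))=\varphi(\alpha^{-1})\Im(\varphi(i_k u))$ depends only on whether $\varphi(\alpha)>0$ or $\varphi(\alpha)<0$. Crucially, since $\alpha=\overline{\alpha}$, the sign of $\varphi(\alpha)$ agrees with the sign of $\overline{\varphi}(\alpha)$, so defining
\[
\Phi_\alpha = \{\varphi\in\Phi : \varphi(\alpha)>0\} \cup \{\overline{\varphi} : \varphi\in\Phi,\ \varphi(\alpha)<0\}
\]
produces a valid CM-type for $L$. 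A direct check then shows that $\alpha^{-1} i_k u$ is $\Phi$-positive if and only if $i_k u$ is $\Phi_\alpha$-positive. Consequently $\mathcal{P}_k = \Pone{\Phi_\alpha}{B_k}$ for each $k$.

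Finally, I would apply the hypothesis: since $\sum_{k=1}^r |\Pone{\Phi'}{B_k}|=N$ for \emph{every} CM-type $\Phi'$, in particular for $\Phi'=\Phi_\alpha$, we conclude
\[
\sum_{k=1}^r \#\mathcal{P}_k \;=\; \sum_{k=1}^r |\Pone{\Phi_\alpha}{B_k}| \;=\; N,
\]
which is precisely the desired count. The only nontrivial step is verifying that $\Phi_\alpha$ is a well-defined CM-type, which is the main (and fairly mild) obstacle, and hinges on the fact that the hypotheses of Theorem~\ref{thm:introA} force $\alpha\in S^*$ to be genuinely totally real so that its sign pattern is complex-conjugation invariant.
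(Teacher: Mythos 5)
Your proposal is correct and uses the same key mechanism as the paper: the paper's Lemma~\ref{lem:PtoP'alpha} establishes $\Palpha{\Phi_b}{I} = \Pone{\Phi_{\alpha b}}{I}$, which is precisely your ``twist by $\alpha$'' step, just expressed via the parametrization of CM-types by totally imaginary elements ($b \mapsto \alpha b$) rather than by directly flipping $\varphi \leftrightarrow \bar\varphi$ according to the sign of $\varphi(\alpha)$. Applying the hypothesis to the twisted CM-type and summing over the classes $B_k$ is then exactly how the paper's Theorem~\ref{thm:sameoutput} derives the total count.
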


In Example \ref{ex:detailedlowprank} we describe in detail how to apply Theorems \ref{eq:introthmC} and \ref{eq:introthmD}. 

While in this paper we study the application of the categorical equivalence due to Centeleghe and Stix to squarefree abelian varieties, we believe our results can be extended to apply to more general abelian varieties over more general finite fields, as alluded to above. For instance, we could adapt our techniques to other functorial equivalences, such as those given by Serre~\cite[Appendix]{Lauter01_Serre}, Kani \cite{Kani11}, Amir-Khosravi~\cite{khosravi}, and Jordan et al.~\cite{JKPRSBT18}, which all focus on abelian varieties which are isogenous to a power. In fact, for powers of ordinary elliptic curves, an algorithm to compute polarizations based on the equivalence in~\cite{JKPRSBT18} was recently given in~\cite{KNRR} by Kirschmer et al.

This paper fits into a larger programme to systematically study isogeny classes of abelian varieties (with or without polarizations) over finite fields. For ordinary principally polarized abelian varieties, these studies have also been carried out in terms of product formulas and orbital integrals, cf.~\cite{AchterGordon17} for elliptic curves and \cite{AchterWilliams15} for abelian surfaces. For supersingular abelian varieties, lattices in (free) modules over quaternion algebras have been employed, e.g. in \cite{XYY, XYYII, XYYIII} for superspecial abelian surfaces. The special case of studying the existence of Jacobian varieties in a fixed isogeny class has been addressed, using a variety of tools, in e.g.~\cite{MaNart02, how04, HNR08, HNR09, valentijnetal}.\\

The paper is structured as follows. Since all abelian varieties over finite fields have complex multiplication (CM), in Section \ref{sec:lifting} we first collect some useful results on CM abelian varieties, namely on homomorphisms (in Subsection~\ref{ssec:homs}) and on canonical liftings (in Subsection~\ref{ssec:lift}). The latter include the aforementioned lifting results for ordinary and almost-ordinary abelian varieties (Propositions~\ref{prop:ord_lift} and \ref{prop:almord_lift}), the residual reflex condition (Definition~\ref{def:RRC}) -- which, by a result of Chai, Conrad and Oort~\cite{chaiconradoort14}, is a sufficient condition for a canonical lifting to exist (cf.~Theorem~\ref{thm:CCOlift}) -- and a result (Proposition~\ref{prop:sepisoglifting}) stating that abelian varieties which are separably isogenous to a liftable variety are liftable.

In Section~\ref{sec:cats} we introduce the algebraic and functorial descriptions of abelian varieties that facilitate the computation of polarizations; Subsection~\ref{ssec:unif} contains the description for varieties in characteristic zero, based on complex uniformization, while Subsections~\ref{ssec:CS} and~\ref{ssec:Stefano} contain that for varieties in positive characteristic. In particular, in Proposition~\ref{prop:CSdual} we prove how the functor behaves with respect to duality. Finally, Subsection~\ref{ssec:compred} compares the compatibility of these descriptions under reduction of the varieties considered. 

Section~\ref{sec:pols} contains the main result (Theorem~\ref{thm:main1}) characterizing polarizations. Beforehand, in Subsection~\ref{ssec:spreading}, we discuss how polarizations of a given abelian variety induce polarizations on all abelian varieties in its isogeny class.

Section~\ref{sec:algo} contains our computational results. In Subsection~\ref{ssec:RRCcomp} we implement a verification of the residual reflex condition, and in Subsection~\ref{ssec:polcomp} we describe an algorithm to find a complete set of representatives of the principal polarizations of an abelian variety in a fixed isogeny class (Proposition~\ref{prop:comppol}). Finally, Subsection~\ref{ssec:examples} contains a number of explicit examples to illustrate the scope and strength of our algorithms.

The code to compute polarizations in a squarefree isogeny class is available at \url{https://github.com/stmar89/PolsAbVarFpCanLift}.
At the same link one can also find the code to compute isomorphism classes and polarizations for squarefree almost-ordinary isogeny classes over an arbitrary finite field $\F_q$.
Moreover, at \href{https://github.com/stmar89/PolsAbVarFpCanLift/blob/main/additional_examples.pdf}{{\tt additional\_examples.pdf}} on GitHub we have summarized our computations of the principal polarizations for all squarefree isogeny classes of abelian varieties of dimension two and three over the finite fields $\F_3$, $\F_5$ and $\F_7$, and of dimension four over $\F_2$ and $\F_3$.\\

\noindent {\bfseries Notational conventions.} When considering abelian varieties $A, A'$ over a field~$K$, we drop the suffix $K$ and write $\End(A)$, $\End^0(A) := \End(A) \otimes \Q$, $\Hom(A,A')$, and $\Hom^0(A,A') := \Hom(A,A') \otimes \Q$ instead of $\End_K(A)$, $\End^0_K(A)$, $\Hom_K(A,A')$ and $\Hom^0_K(A,A')$, respectively.

\section*{Acknowledgments}
The authors thank Knut och Alice Wallenbergs Stiftelse for financial support through grants MG2018-0044 and 2017.0418. The second author is supported by the Dutch Research Council (NWO) through grant VI.Veni.192.038. The third author thanks the Max Planck Institute for Mathematics, and NWO grants 613.001.651 and VI.Veni.202.107 for support. The third author would like to express his gratitude to Frans Oort for helpful discussions and to the Simons Collaboration on Arithmetic Geometry, Number Theory, and Computation for granting access to their computational facilities. The authors thank Frans Oort, Christophe Ritzenthaler, Andrew Sutherland, and John Voight for helpful comments on an earlier draft, and the anonymous referee for careful reading and useful suggestions.

\section{Abelian varieties with complex multiplication}\label{sec:lifting}

In this section, we consider abelian varieties with complex multiplication, either defined over fields of characteristic zero or over finite fields $\mathbb{F}_q$ of characteristic~$p$. 
In Subsection~\ref{ssec:homs}, we study endomorphisms of and homomorphisms between such varieties. Secondly, in Subsection~\ref{ssec:lift}, we study liftings of abelian varieties over finite fields and state results that guarantee the existence of such liftings.

\subsection{Homomorphisms of CM abelian varieties}\label{ssec:homs}

\begin{df}\label{def:CM}\
\begin{enumerate}
    \item A number field $L/\Q$ is a \emph{CM-field} if it is totally complex and quadratic over a totally real subfield. It is equipped with a canonical involution $x \mapsto~\overline{x}$.
    A \emph{CM-algebra} is a finite product of CM-fields. 
    \item An abelian variety $A$ over a field $K$ is \emph{of CM-type} (or admits \emph{sufficiently many complex multiplications}) if there exists a commutative semisimple $\Q$-subalgebra $L$ in $\mathrm{End}^0(A)$ with rank $2g$ over $\Q$, which can be taken to be a CM-algebra. In the situation where $L$ is given, we will also say that $A$ has \emph{CM by $L$}. 
    As an equivalent definition, each of the simple factors $B_i$ of $A$ satisfies that its endomorphism algebra $\mathrm{End}^0(B_i)$ contains a field $L_i$ of degree $[L_i : \Q] = 2\mathrm{dim}(B_i)$.
    It is known that every abelian variety over a finite field is of CM-type. 
\end{enumerate}
\end{df}

\begin{notn}\label{not} Let $A$ and $A'$ be abelian varieties defined over a field $K$ and with CM by an algebra $L$. Fix embeddings $i:L\into \End^0(A)$ and $i':L\into \End^0(A')$.
We denote by $\Hom_L(A,A')$ the group of \emph{$L$-linear homomorphisms} with respect to the fixed embeddings, that is, the homomorphisms $f~:~A \to A'$ such that for every $\alpha \in i^{-1}(\End(A)) \cap i'^{-1}(\End(A'))$ we have
\[ 
i'(\alpha)\circ f = f \circ i(\alpha). 
\]
We will also write $\Hom^0_L(A,A') = \Hom_L(A,A') \otimes \Q$, $\End_L(A)=\Hom_L(A,A)$, and $\End^0_L(A) = \End_L(A) \otimes \Q$.
In what follows, an \emph{$L$-linear map} (between groups of $L$-linear homomorphisms) will mean a map which is compatible with the $L$-linear structures.
\end{notn}

\begin{lemma}\label{lem:KtoKbar}
Let $A$ and $A'$ be abelian varieties defined over a field $K$. Suppose that $A$ and $A'$ have CM by an algebra $L$.
Then base change from $K$ to its algebraic closure $\overline{K}$ induces an $L$-linear bijection
    \[
    \mathrm{Hom}_L(A,A') \leftrightarrow \mathrm{Hom}_L(A_{\overline{K}},A'_{\overline{K}}).
    \]
This bijection also holds when $\mathrm{Hom}$ is replaced with $\mathrm{Hom}^0$.
\end{lemma}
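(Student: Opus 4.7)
The plan is to combine Galois descent for homomorphisms of abelian varieties with the preservation of $L$-linearity by Galois. First, the injection $\Hom_L(A,A')\hookrightarrow\Hom_L(A_{\bar K},A'_{\bar K})$ follows from the fact that base change on homomorphisms of abelian varieties is faithful, and this restricts to the $L$-linear submodules.

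For surjectivity I would take an $L$-linear $f$ over $\bar K$ and use the standard identification $\Hom_K(A,A')=\Hom_{\bar K}(A_{\bar K},A'_{\bar K})^{G_K}$ with $G_K=\mathrm{Gal}(K^{\mathrm{sep}}/K)$ to reduce to showing $\sigma(f)=f$ for all $\sigma\in G_K$. Because the embeddings $i,i'$ are $K$-rational, $G_K$ fixes the images of $L$ in each endomorphism algebra pointwise, so the conjugate $\sigma(f)$ is again $L$-linear and $G_K$ acts $L$-linearly on $\Hom_L^0(A_{\bar K},A'_{\bar K})$.

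The key structural input is that $L$, being of rank $2g$ over $\Q$, is a maximal commutative subalgebra of $\End^0(A_{\bar K})$ (and of $\End^0(A'_{\bar K})$); as a standard consequence $\Hom_L^0(A_{\bar K},A'_{\bar K})$ is either zero or free of rank one over $L$. In the nonzero case $G_K$ acts via a continuous character $\chi\colon G_K\to L^\times$, and the main obstacle is to show $\chi$ is trivial. I would do this by passing to Tate modules, where $T_\ell f$ gives an $L\otimes_\Q\Q_\ell$-linear map between the rank-one modules $T_\ell A\otimes\Q_\ell$ and $T_\ell A'\otimes\Q_\ell$, and $\chi$ is identified with the ratio of the associated Galois characters; the triviality then follows from the compatibility of the $L$-structure with the Galois action, which in the paper's setting of finite base fields is natural because the Frobenius lies in the images of $L$ in both endomorphism algebras. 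Finally, the integral statement follows from the rational one by intersecting with the integral $\Hom$, an operation compatible with base change.
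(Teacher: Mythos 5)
The paper's own ``proof'' is simply a citation of \cite[Example 1.7.4.1]{chaiconradoort14}, so you are providing a self-contained argument where the paper defers to the literature. Your structural setup is sound: base change is injective, the $G_K$-action preserves $L$-linearity, $\End_L^0(A_{\bar K})=L$ (since $L$, having $\Q$-rank $2g$, is its own centralizer in $\End^0(A_{\bar K})$), hence $\Hom_L^0(A_{\bar K},A'_{\bar K})$ is $0$ or free of rank one over $L$ and $G_K$ acts through a character $\chi\colon G_K\to L^\times$ identified $\ell$-adically with $\chi_{A'}\chi_A^{-1}$. All of this is correct.

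The gap is the step ``$\chi$ is trivial.'' First, you explicitly retreat to the case of finite base fields, but the lemma is stated for an arbitrary field $K$ and the paper crucially applies it over number fields and their completions (Corollary~\ref{cor:allhoms}, which feeds into Lemma~\ref{lem:Hpol}); an argument valid only over $\F_q$ does not prove the statement. Second, even over $\F_q$ the appeal to ``Frobenius lies in the images of $L$'' does not give triviality: what it gives is $\chi(\mathrm{Frob})=i'^{-1}(\pi_{A'})\cdot i^{-1}(\pi_A)^{-1}\in L^\times$, and nothing in the hypotheses (Notation~\ref{not}, which fixes $i,i'$ arbitrarily) forces these two elements of $L$ to agree. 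In fact this ratio is exactly where the content lies: if $\Hom_L(A,A')\neq 0$ one can show the conclusion in one line (any $f$ over $\bar K$ equals $g_0\circ i(\beta)$ for a $K$-rational $L$-linear $g_0$ and $\beta\in L$, hence is $G_K$-fixed because $i$ and $g_0$ are $K$-rational), whereas if $\Hom_L(A,A')=0$ but $\Hom_L(A_{\bar K},A'_{\bar K})\neq 0$ --- which your character $\chi$ being nontrivial is precisely detecting, e.g.\ $A'$ a nontrivial twist of $A$ with CM by $L$ still defined over $K$ --- the desired bijection is exactly what is at stake. Your proof neither establishes $\chi_A=\chi_{A'}$ nor isolates the hypothesis under which it holds, so the crucial step is not closed.
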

\begin{proof}
See
\cite[Example 1.7.4.1]{chaiconradoort14}, which works without any assumptions on the endomorphism rings of $A$ and $A'$.
\end{proof}

\begin{lemma}\label{lem:KbartoC}
Let $K$ be a field with algebraic closure $\overline{K}$ such that there exists an embedding $j: \overline{K} \into \mathbb{C}$, and let $A$ and $A'$ be abelian varieties defined over $K$.
Then the base change induces a bijection
    \[
    \mathrm{Hom}(A_{\overline{K}},A'_{\overline{K}}) \leftrightarrow \mathrm{Hom}(A_{\mathbb{C}},A'_{\mathbb{C}}).
    \]
This bijection also holds when $\mathrm{Hom}$ is replaced with $\mathrm{Hom}^0$.
\end{lemma}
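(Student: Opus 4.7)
The plan is to prove the bijection in both directions separately; the $\mathrm{Hom}^0$ statement will then follow by tensoring with $\mathbb{Q}$.

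For injectivity, I would argue from faithful flatness of the field extension $\overline{K} \subseteq \mathbb{C}$: if two elements $f, g \in \mathrm{Hom}(A_{\overline{K}}, A'_{\overline{K}})$ become equal after base change to $\mathbb{C}$, then their difference $f-g$ vanishes after base change and hence already over $\overline{K}$.

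For surjectivity I would use a spreading out argument. Given a morphism $\phi \colon A_{\mathbb{C}} \to A'_{\mathbb{C}}$, I would first observe that the graph of $\phi$ in $A_{\mathbb{C}} \times A'_{\mathbb{C}}$ is cut out by finitely many polynomial equations whose coefficients lie in some finitely generated $\overline{K}$-subalgebra $R \subseteq \mathbb{C}$. Hence $\phi$ extends to a morphism $\phi_R \colon A_R \to A'_R$ of abelian schemes over $\mathrm{Spec}(R)$, whose base change along $R \hookrightarrow \mathbb{C}$ recovers $\phi$. Next, since $\overline{K}$ is algebraically closed, the Hilbert Nullstellensatz supplies a $\overline{K}$-algebra homomorphism $s \colon R \to \overline{K}$, and specializing $\phi_R$ along $s$ yields a morphism $\phi_0 \in \mathrm{Hom}(A_{\overline{K}}, A'_{\overline{K}})$.

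The main obstacle will be to verify that the base change of $\phi_0$ to $\mathbb{C}$ agrees with the original $\phi$. For this I would appeal to the rigidity of homomorphisms of abelian schemes: the functor sending a connected base $T$ to $\mathrm{Hom}(A_T, A'_T)$ is representable by a locally constant (in particular étale) group scheme, so after possibly shrinking $\mathrm{Spec}(R)$ to a connected open neighbourhood containing both the $\overline{K}$-point $s$ and the generic point (which supplies $\phi$), the morphism $\phi_R$ is determined by its value at any single geometric point. Comparing the fibres at $s$ and at the generic point then yields the required equality, completing the argument.
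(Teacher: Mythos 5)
The paper's own proof of this lemma is a one-line citation to \cite[Lemma~1.2.1.2]{chaiconradoort14}, so you are supplying an argument the paper defers to a reference; your spreading-out proof is essentially the standard one. Injectivity via faithful flatness is fine, and the spreading-out to a finitely generated $\overline{K}$-algebra $R\subseteq\mathbb{C}$ and specialization along a $\overline{K}$-point $s\colon R\to\overline{K}$ (which exists by the Nullstellensatz) are also correct — note only that one should shrink $\mathrm{Spec}(R)$ to be integral, and observe that $\phi_R$ is automatically a group homomorphism once it sends the zero section to the zero section. Two points in the final step deserve tightening. First, the $\underline{\mathrm{Hom}}$-scheme of abelian schemes is not \emph{locally constant} in general; what is true, and suffices, is that it is unramified and separated, so its diagonal is open and closed, and hence two sections over a connected base that agree at one point agree everywhere. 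Second, ``comparing the fibres at $s$ and at the generic point'' is not literally meaningful, since those fibres live over different fields. The precise version of your idea is this: both $A_R$ and $A'_R$ are pullbacks of $A_{\overline{K}},A'_{\overline{K}}$, so $\phi_0\otimes_{\overline{K}}R$ is a second element of $\mathrm{Hom}(A_R,A'_R)$; it agrees with $\phi_R$ at the point $s$, so by the rigidity just stated $\phi_R=\phi_0\otimes_{\overline{K}}R$; base changing both sides along the inclusion $R\hookrightarrow\mathbb{C}$ gives $\phi=\phi_0\otimes_{\overline{K}}\mathbb{C}$, as desired. (Equivalently one may argue on $\ell$-adic Tate modules: $T_\ell(\phi_R-\phi_0\otimes_{\overline{K}}R)$ is a morphism of lisse sheaves on the connected scheme $\mathrm{Spec}(R)$ that vanishes at the geometric point $s$, hence vanishes, and faithfulness of $T_\ell$ on each fibre finishes the proof.) With these clarifications your argument is correct.
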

\begin{proof}
 See \cite[Lemma~1.2.1.2]{chaiconradoort14}.
\end{proof}

Lemmas~\ref{lem:KtoKbar} and \ref{lem:KbartoC} immediately imply the following corollary.

\begin{cor}\label{cor:allhoms}
Let $K$ be a number field and $K_{\mathfrak{p}}$ be its completion at a prime $\mathfrak{p}$.
Suppose that $A$ and $A'$ are defined over $K$ and have CM by an algebra $L$. Then there are $L$-linear bijections
\begin{equation*}
\begin{split}
\Hom_L(A,A') & \leftrightarrow \Hom_L(A_{\overline{K}}, A'_{\overline{K}}) \leftrightarrow \Hom_L(A_{\C},A'_{\C}) \\ & \leftrightarrow  \Hom_L(A_{\overline{K}_{\mathfrak{p}}},A'_{\overline{K}_{\mathfrak{p}}}) \leftrightarrow \Hom_L(A_{K_{\mathfrak{p}}}, A'_{K_\mathfrak{p}}).
\end{split}
\end{equation*} 
These bijections also hold when $\mathrm{Hom}$ is replaced with $\mathrm{Hom}^0$.
\qed
\end{cor}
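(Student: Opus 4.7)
The plan is to assemble the five-term chain of bijections by applying Lemmas~\ref{lem:KtoKbar} and~\ref{lem:KbartoC} four times, alternating between the global field $K$ and its completion $K_{\mathfrak{p}}$. First, I would apply Lemma~\ref{lem:KtoKbar} directly to $A$ and $A'$ over $K$ to obtain the first bijection $\Hom_L(A,A') \leftrightarrow \Hom_L(A_{\overline{K}}, A'_{\overline{K}})$. Next, since $K$ is a number field there is certainly an embedding $\overline{K} \into \C$ (pick any embedding $K \into \C$ and extend), so Lemma~\ref{lem:KbartoC} supplies the second bijection $\Hom(A_{\overline{K}}, A'_{\overline{K}}) \leftrightarrow \Hom(A_{\C}, A'_{\C})$.

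For the remaining two bijections I would work on the $p$-adic side symmetrically. The field $\overline{K}_{\mathfrak{p}}$ is an algebraically closed field of characteristic zero of cardinality that of the continuum, so a (non-canonical) embedding $\overline{K}_{\mathfrak{p}} \into \C$ exists; applying Lemma~\ref{lem:KbartoC} with the base field taken to be $K_{\mathfrak{p}}$ yields $\Hom(A_{\overline{K}_{\mathfrak{p}}}, A'_{\overline{K}_{\mathfrak{p}}}) \leftrightarrow \Hom(A_{\C}, A'_{\C})$, which composed with the previous step gives the third bijection. Finally, another application of Lemma~\ref{lem:KtoKbar}, this time to $A_{K_{\mathfrak{p}}}$ and $A'_{K_{\mathfrak{p}}}$ over $K_{\mathfrak{p}}$, provides $\Hom_L(A_{K_{\mathfrak{p}}}, A'_{K_{\mathfrak{p}}}) \leftrightarrow \Hom_L(A_{\overline{K}_{\mathfrak{p}}}, A'_{\overline{K}_{\mathfrak{p}}})$.

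The one point requiring a small comment is that Lemma~\ref{lem:KbartoC} is stated for $\Hom$, not for $\Hom_L$. Each of the four bijections is, by construction, induced by a base-change functor, and base change is compatible with the action of $\End^0(A)$ and $\End^0(A')$ on $\Hom^0$; therefore the $L$-linearity condition in Notation~\ref{not} is transported across each arrow, so the bijections restrict to the $L$-linear subgroups. The statement for $\Hom^0$ follows by tensoring with $\Q$, or equivalently from the corresponding statements for $\Hom^0$ in Lemmas~\ref{lem:KtoKbar} and~\ref{lem:KbartoC}.

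The hardest point to be careful about is really just bookkeeping: making sure that an embedding $\overline{K}_{\mathfrak{p}} \into \C$ exists (which is why the hypothesis in Lemma~\ref{lem:KbartoC} was phrased in that generality) and that $L$-linearity is preserved under each base change. Neither is a substantive obstacle, and the corollary then follows by concatenating the four displayed bijections.
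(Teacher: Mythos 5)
Your proof is correct and matches the paper's intent: the paper gives no explicit argument beyond observing that the corollary follows from Lemmas~\ref{lem:KtoKbar} and~\ref{lem:KbartoC}, and your four-step assembly (applying Lemma~\ref{lem:KtoKbar} over $K$ and over $K_{\mathfrak{p}}$, and Lemma~\ref{lem:KbartoC} over $K$ and over $K_{\mathfrak{p}}$, then splicing at $\C$) is exactly that. Your added remark on transporting $L$-linearity across the base-change maps is a welcome bit of extra care, since Lemma~\ref{lem:KbartoC} as stated only mentions $\Hom$, not $\Hom_L$; one could also note that the embeddings $\overline{K}\hookrightarrow\C$ and $\overline{K}_{\mathfrak{p}}\hookrightarrow\C$ can be chosen compatibly (via $\overline{K}\hookrightarrow\overline{K}_{\mathfrak{p}}$) so that the middle object $A_{\C}$ is literally the same in both halves, but this does not affect the conclusion.
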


\subsection{CM-liftings}\label{ssec:lift}

Throughout this subsection, we let $A_0/\F_q$ be a $g$-dimensional abelian variety whose Frobenius endomorphism $\pi_{A_0}$ has characteristic polynomial~$h$. 
If $h$ is squarefree, that is, its irreducible factors $h=  h_1 \cdot \ldots \cdot h_t$ are all distinct, then $L = \Q[x]/(h) \simeq \Q(\pi_1) \times \ldots \times \Q(\pi_t) =: L_1 \times \ldots \times L_t$ is a CM-algebra. If $h$ is irreducible, then $A_0$ is \emph{simple} (over $\F_q$), meaning that it has no nonzero proper abelian subvariety. Some of the results we recall in this subsection were originally, and more generally, stated for \emph{isotypic} abelian varieties, for which $h$ has a unique irreducible factor. 

\begin{df}\label{def:CMlift}
Let $A_0/\F_q$ be a $g$-dimensional abelian variety.
We say that $A_0/\F_q$ has a \emph{CM-lifting}
if there exist both a local domain $\mathcal{R}$ of characteristic zero with residue field $\F_q$ and fraction field $K$, and a $g$-dimensional abelian scheme $\mathcal{A}$ over $\mathcal{R}$, such that $\mathcal{A} \otimes \F_q \simeq A_0$ and $\mathcal{A} \otimes K =: A$, where $\mathrm{End}^0(A)$ 
contains a CM-algebra $L$ of degree $[L:\Q] = 2g$. We also say that $A/K$ (or $\mathcal{A}/\mathcal{R}$) is a CM-lifting of $A_0/\F_q$.
The scheme $\mathcal{A}/\mathcal{R}$ is a \emph{canonical lifting} of $A_0/\F_q$ if $\mathrm{End}(\mathcal{A}) = \mathrm{End}(A_0)$.
\end{df}

To recall some first results from the literature on the existence of canonical liftings, we need the following definition.

\begin{df}\label{def:ordAV} \
\begin{enumerate}
    \item The \emph{$p$-rank} $f(A_0)$ of $A_0/\F_q$ is defined through
    \[
    \vert A_0(\overline{\F}_p)[p]\vert = p^{f(A_0)},
    \]
    and $0 \leq f(A_0) \leq g$. The $p$-rank is an isogeny invariant.
    \item $A_0$ is \emph{ordinary} if $f(A_0) = g$ and \emph{almost ordinary} if $f(A_0) = g-1$.
\end{enumerate}
\end{df}

\begin{prop}[{\cite[Theorem 2.1]{katz}, \cite[Theorem V.3.3]{messing}}]
\label{prop:ord_lift} 
Let $A_0$ be an ordinary abelian variety over $\F_q$. Then $A_0$ admits a canonical lifting to the Witt vectors~$W(\F_q)$.
\end{prop}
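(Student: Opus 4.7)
The plan is to invoke the classical Serre-Tate theory of canonical lifts of ordinary abelian varieties, which is precisely the content of the cited references \cite{katz} and \cite{messing}. At its heart is the observation that the formal deformation space of an ordinary abelian variety carries a canonical structure of a formal torus, allowing us to single out a distinguished ``identity'' lift whose endomorphisms automatically match those of $A_0$.

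First I would set up the Serre-Tate coordinates. For $A_0/\F_q$ ordinary, the connected-{\'e}tale sequence of each $p^n$-torsion group scheme $A_0[p^n]$ splits canonically, presenting the $p$-divisible group $A_0[p^\infty]$ as an extension of an {\'e}tale piece by a multiplicative one. Exploiting Cartier duality together with the rigidity of {\'e}tale and multiplicative formal groups over nilpotent thickenings of $\F_q$, one proves that the formal deformation functor of $A_0$ from Artin local $W(\F_q)$-algebras with residue field $\F_q$ to sets is representable by a formal torus $T$ of dimension $g^2$ over $\mathrm{Spf}\,W(\F_q)$. I would then define the canonical formal lift $\widehat{\mathcal{A}}$ to be the deformation corresponding to the identity section of $T$.

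Second, I would algebraize: using a polarization of $A_0$ (which exists, since $A_0$ is projective), one lifts it step by step to each Artin quotient $W_n(\F_q)$, yielding a compatible system of polarized formal lifts. Grothendieck's formal existence theorem then algebraizes $\widehat{\mathcal{A}}$ into an honest abelian scheme $\mathcal{A}/W(\F_q)$ whose special fiber is $A_0$.

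Finally, to check $\End(\mathcal{A}) = \End(A_0)$: any $f \in \End(A_0)$ induces, via functoriality of Serre-Tate coordinates, an endomorphism of the formal torus $T$ which fixes the identity section, so $f$ lifts uniquely to $\widehat{\mathcal{A}}$ and hence, by algebraization, to $\mathcal{A}$. Conversely, the specialization map $\End(\mathcal{A}) \to \End(A_0)$ is injective because $W(\F_q)$ is connected and abelian schemes are separated. The main technical obstacle is justifying the canonical formal torus structure on the deformation functor, which requires the ordinarity hypothesis in an essential way: without the {\'e}tale/multiplicative splitting of $A_0[p^\infty]$, the deformation space is not a torsor under $\Hom$-groups and no distinguished ``identity'' deformation is available.
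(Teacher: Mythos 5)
The paper gives no proof of this proposition beyond citing Katz and Messing for the Serre--Tate theory, and your proposal is precisely a correct summary of that theory: the formal torus structure on the ordinary deformation functor, the identity section as canonical formal lift, algebraization via a lifted polarization and Grothendieck's existence theorem, and the lifting of endomorphisms by functoriality of the Serre--Tate coordinates. Your approach is essentially the same as the paper's.
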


\begin{remark}\label{rem:ordlift}
The canonical lifting of Proposition~\ref{prop:ord_lift} is called a \emph{Serre-Tate lifting}. The Serre-Tate liftings (cf.~\cite[Th{\'e}or{\`e}me 7]{Del69}) provide an equivalence of categories sending an ordinary abelian variety over $\mathbb{F}_q$ to its associated \emph{Deligne module}, i.e., a free finitely generated $\mathbb{Z}$-module equipped with a semisimple endomorphism which plays the role of the Frobenius endomorphism. 

The Centeleghe-Stix equivalence, which we will introduce in Subsection~\ref{ssec:CS} and will use throughout this paper, does not make use of liftings. 
Nevertheless, on the subcategory of ordinary abelian varieties over $\mathbb{F}_p$ this equivalence is isomorphic to Deligne's equivalence, cf.~\cite[7.4]{CentelegheStix15}. 
\end{remark}

\begin{prop}\label{prop:almord_lift}
Let $A_0$ be an almost-ordinary abelian variety over a finite field $\F_q$ with commutative endomorphism ring. Then $A_0$ admits a canonical lifting to a quadratic ramified extension of the Witt vectors $W(\F_q)$. If $q$ is odd, then this extension can be taken to be generated by $\sqrt p$. 
\end{prop}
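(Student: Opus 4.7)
The plan is to imitate and slightly extend the Oswal--Shankar argument by reducing, via Serre--Tate theory, to a lifting problem for a one-dimensional supersingular $p$-divisible group. Since $A_0$ has $p$-rank $g-1$, up to isogeny its $p$-divisible group decomposes as
\[
A_0[p^\infty] \sim G^{\mathrm{et}} \oplus G^{\mathrm{mult}} \oplus G^{\mathrm{ss}},
\]
where $G^{\mathrm{et}}$ and $G^{\mathrm{mult}}$ have height $g-1$ and $G^{\mathrm{ss}}$ is a supersingular $p$-divisible group of dimension $1$ and height $2$. By Serre--Tate, producing a canonical lift of $A_0$ over a complete local $W(\F_q)$-algebra $\mathcal{R}$ is equivalent to producing a lift of $A_0[p^\infty]$ that carries a lift of the $\End(A_0)$-action. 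The étale and multiplicative summands have (unique) canonical lifts over $W(\F_q)$ itself, so the entire obstruction concentrates on lifting $G^{\mathrm{ss}}$ together with the action of the commutative summand of $\End(A_0)$ acting on it.

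First I would analyse the endomorphism structure of the supersingular factor. Write $h = h_{\mathrm{ord}} \cdot h_{\mathrm{ss}}$ according to the above decomposition, so that $h_{\mathrm{ss}}$ has all of its $p$-adic Newton slopes equal to $1/2$. Because $\End(A_0)$ is commutative, the induced endomorphism ring of $G^{\mathrm{ss}}$ is an order $\mathcal{O}$ in an imaginary quadratic field $K = \Q(\pi_{\mathrm{ss}})$, in which $p$ ramifies (as the Newton slope is $1/2$). Let $\mathfrak{p}$ be the unique prime of $K$ above $p$; then $K_{\mathfrak{p}}/\Q_p$ is a ramified quadratic extension. By the theory of formal $\mathcal{O}_\mathfrak{p}$-modules (a Lubin--Tate-style construction generalised to ramified quadratic base, cf.~Gross's theory of $\mathcal{O}$-modules), the supersingular formal $p$-divisible group $G^{\mathrm{ss}}$ admits a canonical lift to a formal $\mathcal{O}_\mathfrak{p}$-module $\widetilde{G}^{\mathrm{ss}}$ over the compositum $\mathcal{R} = W(\F_q) \otimes_{\Z_p} \mathcal{O}_{K_{\mathfrak{p}}}$, which is a ramified quadratic extension of $W(\F_q)$. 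Reassembling $\widetilde{G}^{\mathrm{ss}}$ with the canonical lifts of $G^{\mathrm{et}}$ and $G^{\mathrm{mult}}$, base-changed to $\mathcal{R}$, gives a lift of $A_0[p^\infty]$ on which the whole of $\End(A_0)$ still acts; Serre--Tate then returns a canonical abelian scheme lift $\mathcal{A}/\mathcal{R}$.

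To pin down $\mathcal{R}$ when $q$ is odd, I would argue as follows. Every ramified quadratic extension of the fraction field of $W(\F_q)$ for $p$ odd is, up to isomorphism, either $\mathrm{Frac}(W(\F_q))(\sqrt p)$ or $\mathrm{Frac}(W(\F_q))(\sqrt{u p})$ for a non-square unit $u \in W(\F_q)^*$. The Frobenius $\pi_{\mathrm{ss}}$ satisfies a quadratic polynomial $x^2 - ax + q$ with $v_p(a) \geq \lceil n/2 \rceil$ (where $q = p^n$), so $K = \Q(\sqrt{a^2 - 4q})$; one checks by inspection of the local discriminant $a^2 - 4q$ modulo squares in $W(\F_q)^*$ that this radical always becomes $\sqrt{p}$ (up to a unit square) after the unramified base change to $W(\F_q)$. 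Hence $\mathcal{R} \simeq W(\F_q)[\sqrt p]$.

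The main obstacle, and the step I expect to require the most care, is the second one: verifying that the formal $\mathcal{O}_\mathfrak{p}$-module lift of $G^{\mathrm{ss}}$ really does interact correctly with the $\End(A_0)$-action and produces a lift on which all of $\End(A_0)$, and not merely the summand factoring through $\mathcal{O}$, acts. This is exactly the point where commutativity of $\End(A_0)$ is essential: it prevents the appearance of a quaternionic piece in the endomorphism algebra of $G^{\mathrm{ss}}$, which would obstruct the existence of a characteristic-zero lift preserving all endomorphisms. The final identification of $\mathcal{R}$ with $W(\F_q)[\sqrt p]$ for odd $q$ is comparatively concrete and reduces to a local computation of quadratic symbols.
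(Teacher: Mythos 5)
The paper's own proof of this proposition is essentially a citation: the first statement is attributed to \cite[Remark 3.5.7]{chaiconradoort14}, and the second to \cite[Section 2]{OswalShankar19}, with an explanation that the simplicity and dimension hypotheses in the latter are used only to guarantee commutativity of $\End(A_0)$ (and hence that the supersingular part of the $p$-divisible group has an endomorphism ring of rank two), so the argument extends verbatim. Your proposal instead tries to reconstruct the Oswal--Shankar argument from scratch, which is a legitimate aim, but it contains a concrete error at a critical step.

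The problematic claim is: ``the induced endomorphism ring of $G^{\mathrm{ss}}$ is an order $\mathcal{O}$ in an imaginary quadratic field $K=\Q(\pi_{\mathrm{ss}})$, in which $p$ ramifies (as the Newton slope is $1/2$).'' The parenthetical reasoning is a non sequitur, and the assertion is false in general. Writing $L_{ss}$ for the local quadratic extension of $\Q_p$ acting on the slope-$1/2$ part, the Shimura--Taniyama constraint $\ord_\nu(\pi)/\ord_\nu(q)=1/2$ forces $e \cdot n/2 \in \Z$ where $e$ is the ramification index and $q=p^n$; this is automatic when $e=2$ but also allows $e=1$ (inert) whenever $n$ is even. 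Both cases genuinely occur, as the paper itself records in Remark~\ref{rem:almord} (and as Oswal--Shankar treat separately in their Propositions 3.3 and 3.8). Your argument, which relies on the ramified Lubin--Tate/Gross formal $\mathcal{O}_{\mathfrak{p}}$-module lift and on $\mathcal{O}_{K_{\mathfrak{p}}}$ being ramified over $\Z_p$, says nothing about the inert case: there the naive formal $\mathcal{O}_{L_{ss}}$-module lift lives over an unramified extension, and a separate argument is needed to explain why the canonical abelian-scheme lift nevertheless lands over a \emph{ramified} quadratic extension of $W(\F_q)$ as the proposition asserts. Until you either rule out the inert case or supply the inert-case argument, the proof is incomplete; the concluding discriminant computation for the $\sqrt{p}$ statement is likewise asserted rather than carried out, but that is a smaller issue compared with the missing inert case.
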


\begin{proof} For the first statement, see \cite[Remark 3.5.7]{chaiconradoort14} and the references therein.  
For the second statement, see \cite[Section 2]{OswalShankar19}. The result in the latter reference is shown for simple abelian varieties of dimension greater than two. On the other hand, those assumptions are only used to ensure that the endomorphism ring of $A_0$ is commutative, and hence the endomorphism ring of its $p$-divisible group has a supersingular part of rank two. This argument also holds when $W(\overline{\F}_q)$ is replaced with $W(\F_q)$; alternatively, one may use a descent argument following, e.g., \cite[Theorem~1.7.2.1 (Shimura-Taniyama)]{chaiconradoort14}.
\end{proof}

\begin{remark}\label{rem:almord}
As in the ordinary case, cf.~Remark~\ref{rem:ordlift}, the canonical liftings of Proposition~\ref{prop:almord_lift} provide
an equivalence of categories which associates with any almost-ordinary abelian variety over $\mathbb{F}_q$ an \emph{almost-ordinary Deligne module}, cf.~\cite[Definition 3.1]{OswalShankar19}. These modules again have a semisimple endomorphism playing the role of the Frobenius endomorphism; in particular, the characteristic polynomial of the endomorphism is equal to the characteristic polynomial of the Frobenius endomorphism of the abelian variety.

More precisely, in \cite{OswalShankar19} the following is shown for a simple almost-ordinary abelian variety $A_0$ over $\mathbb{F}_q$ whose Frobenius endomorphism has characteristic polynomial~$h$:
If the quadratic extension $L_{ss}/ \mathbb{Q}_p$ corresponding to the slope $1/2$ part of the variety is ramified (resp.~inert), the map from almost-ordinary abelian varieties isogenous to $A_0$ to almost-ordinary Deligne modules with characteristic polynomial $h$ is bijective (resp.~two-to-one), cf.~\cite[Proposition 3.8]{OswalShankar19} (resp.~\cite[Proposition 3.3]{OswalShankar19}).
\end{remark}

We generalize some results in \cite{OswalShankar19} -- in particular Propositions~3.3 and~3.8 mentioned in Remark~\ref{rem:almord}, and Theorem~4.5 which concerns polarizations -- as follows:

\begin{thm} \label{thm:OS} 
Let $q$ be any odd prime power and consider any almost-ordinary isogeny class corresponding to a squarefree characteristic polynomial of Frobenius. 
Then the straightforward generalizations of Proposition 3.3, Proposition 3.8 and Theorem 4.5 in \cite{OswalShankar19} hold.  
\end{thm}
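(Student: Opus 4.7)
The plan is to reduce the squarefree case to the simple case already handled by Oswal-Shankar, by exploiting the product decomposition of the CM-algebra $L = \Q[x]/(h) \simeq L_1 \times \cdots \times L_t$ arising from squarefreeness of $h$. Since the isogeny class is almost-ordinary, the Newton polygon has slopes $0$, $1/2$, $1$ with the slope $1/2$ part of multiplicity $2$; this slope can only be contributed by exactly one simple factor, say $L_1$ (coming from a simple almost-ordinary factor $B_1$ of dimension $\dim(B_1)\geq 2$), while the remaining factors $L_2,\dots,L_t$ correspond to simple ordinary factors. This decomposition is the organizing principle: statements about the full isogeny class will be deduced from the corresponding statements about each simple piece, made to fit together via the $L$-module structure.

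First I would verify that the hypotheses under which the Oswal-Shankar arguments were originally carried out are used only to force commutativity of the endomorphism algebra: simplicity together with $\dim(B_1)\geq 2$ was invoked in \cite{OswalShankar19} precisely to rule out quaternionic endomorphism algebras in the slope $1/2$ part, after which $\End^0(B_1)=L_1$ is a CM-field and the $p$-divisible group of $B_1$ has supersingular part of rank $2$. In our squarefree setting, $\End^0=L$ is a CM-algebra by construction, so the required commutativity is automatic, and the corresponding rank-$2$ structure appears in the slope $1/2$ block of the $p$-divisible group of $A_0$ coming from $B_1$. Combined with Proposition~\ref{prop:almord_lift}, which supplies a canonical lifting of $A_0$ to a quadratic ramified extension of $W(\F_q)$ (factor-wise: ordinary factors lift via Proposition~\ref{prop:ord_lift}, the almost-ordinary factor via Proposition~\ref{prop:almord_lift}, and the liftings glue because they share the base ring), the OS proofs of Propositions~3.3 and~3.8 then apply verbatim to the slope $1/2$ block: the map from abelian varieties to almost-ordinary Deligne modules is two-to-one when $L_{ss}/\Q_p$ (the completion of $L_1$ at the prime of slope $1/2$) is inert, and bijective when it is ramified. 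On the ordinary factors $L_2,\dots,L_t$, Deligne's classical equivalence gives a bijection. Assembling these over the full product $L$ yields the generalization of Propositions~3.3 and~3.8.

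For the generalization of Theorem~4.5 on polarizations, the argument of \cite{OswalShankar19} proceeds by identifying polarizations of the abelian variety with certain elements of the CM-field that are totally imaginary and $\Phi$-positive, using the canonical lifting to import the Riemann-form description from characteristic zero. In our squarefree setting the totally real subalgebra $L^+$ of $L$ is the product $L_1^+\times\cdots\times L_t^+$, and a CM-type $\Phi$ on $L$ is a disjoint union of CM-types on each factor. Accordingly, the conditions ``totally imaginary'' and ``$\Phi$-positive'' can be checked componentwise. Combining this componentwise check with the lifting of polarizations to characteristic zero (available because the canonical lifting preserves the full endomorphism ring, hence sends polarizations to polarizations via their Rosati involution) reproduces the OS characterization with $L_1$ replaced by $L$.

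The main obstacle I anticipate is bookkeeping rather than substance: one has to ensure that the two-to-one (inert) versus bijective (ramified) dichotomy in Propositions~3.3 and~3.8, which is a statement about the slope $1/2$ block alone, is correctly packaged together with the bijective ordinary contribution from the other factors so that the global map from abelian varieties in the isogeny class to (almost-ordinary) Deligne modules has the expected fibre structure. This requires checking that the Galois-theoretic choice that produces the two-to-one fibre in the inert case acts trivially on the ordinary factors and hence survives unchanged in the product. Once this compatibility is verified, the remainder of the argument is a direct translation of \cite{OswalShankar19} from a CM-field to a CM-algebra.
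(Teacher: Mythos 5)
Your proposal correctly identifies the key point that the paper's proof hinges on --- the hypotheses in Oswal--Shankar (simple, dimension $>2$) are there solely to force commutativity of the endomorphism ring, and squarefreeness of $h$ supplies that commutativity directly --- so in that respect you and the paper agree. But you take a genuinely different and more elaborate route to the conclusion than the paper does. The paper's proof is a single sentence of the form: the OS arguments apply \emph{verbatim} to any almost-ordinary abelian variety with commutative endomorphism ring, exactly as in the proof of Proposition~\ref{prop:almord_lift}, with no decomposition into simple isogeny factors. You instead decompose $L=L_1\times\cdots\times L_t$, isolate the simple almost-ordinary factor contributing slope $1/2$, apply Deligne to the ordinary factors, apply OS to the almost-ordinary factor, and then ``assemble.''

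This decomposition route carries a subtle conceptual pitfall that the direct route avoids. An abelian variety $A_0\in\AV_h(q)$ is only \emph{isogenous} to a product $B_1\times\cdots\times B_t$ of simple factors, not equal to one, and the isomorphism classes in the squarefree isogeny class (and the Deligne modules classifying them) do not split as products over the simple factors. The correct decomposition used by OS is not of the variety but of the $p$-divisible group $A_0[p^\infty]$ into \'etale, supersingular-of-rank-2, and multiplicative parts; Serre--Tate/OS then lifts $A_0$ by lifting this $p$-divisible group directly. Your ``factor-wise'' lifting with a subsequent ``gluing because they share the base ring'' conflates the two decompositions, and the ``assembling over the full product $L$'' step you flag as mere bookkeeping is actually where the real content would lie if you pursued this route seriously --- it is precisely because the isomorphism-class count does not factor over $L_1,\dots,L_t$ that the direct reading of OS (applied to $A_0$ itself, using only commutativity) is both simpler and safer, and it is the one the paper uses. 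If you want to salvage the decomposition language, shift it from abelian varieties and their simple factors to the canonical three-part decomposition of $A_0[p^\infty]$ and its $L$-action; once that is done, your account of the inert/ramified dichotomy and of the componentwise positivity conditions for polarizations goes through.
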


\begin{proof}
The statements in \cite{OswalShankar19} are shown for simple abelian varieties of dimension greater than two. Just as in the proof of  Proposition~\ref{prop:almord_lift}, the arguments in \cite{OswalShankar19}  generalize as long as the endomorphism rings of the abelian varietes are commutative, which in turn holds if and only if the characteristic polynomial is squarefree.
\end{proof}

For an abelian variety $A_0/\F_q$ which is not ordinary nor almost ordinary, a CM-lifting need not exist. 
For instance, \cite[Theorem B]{OortCMliftings} shows that for any prime $p$ and integers $f,g$ such that $0 \leq f \leq g-2$, there exists an abelian variety $B$ over $\overline{\F}_p$ with $\mathrm{dim}(B) = g$ and $f(B) = f$ which does not have a CM-lifting.
In particular, this implies that even after a field extension of $\F_q$, there are varieties which do not have a CM-lifting.

On the other hand, any abelian variety $A_0/\F_q$ is $\F_q$-isogenous to an abelian variety
which \emph{does} admit a CM-lifting:
The classical results by Honda and Tate 
(cf.\cite[Th{\'e}or{\`e}me 2]{tateclasses} and \cite[\textsection 2, Theorem 1]{Honda68}, also see \cite{ChaiOort} for a proof avoiding complex uniformization), may require a finite extension of $\F_q$ for this, while \cite[Theorem 4.1.1]{chaiconradoort14} does not.

However, we will insist on lifting $A_0/\F_q$ to a scheme $\mathcal{A}$ over a \emph{normal} domain~$\mathcal{R}$. 
Only in this case, constructing the generic fiber $A_K$ of $\mathcal{A}$ is a fully faithful functor, cf.~\cite[Lemma 1.8.4]{chaiconradoort14}. 
This implies we can identify $\End(A_K) = \End(\mathcal{A})$ and $\Hom(A_K,A_K^{\vee}) = \Hom(\mathcal{A},\mathcal{A}^{\vee})$, which will be crucial when we consider polarizations in Section~\ref{sec:pols}.

There are obstructions to the existence of a CM-lifting to a normal domain up to isogeny. In \cite[Theorem 2.1.6]{chaiconradoort14}, recalled in Theorem \ref{thm:CCOlift} below, it is proven that such CM-liftings exist if some arithmetic conditions, recalled in Definition~\ref{def:RRC}, are satisfied. 
To state the conditions, we first need another definition.

\begin{df}\label{def:padicCMtype}\
\begin{enumerate}[1.]
    \item \label{def:padicCMtype:cmtype}  A \emph{CM-type} for $L = L_1 \times \ldots \times L_t$ is a subset $\Phi = \Phi_1 \times \ldots \times \Phi_t$ of the embeddings $\Hom(L,\overline{\Q}) = \bigsqcup_{i=1}^t \Hom(L_i, \overline{\Q})$
    of cardinality $g$ such that $\Phi \sqcup (\Phi \circ \iota) = \Hom(L,\overline{\Q})$, where $\iota$ denotes complex conjugation.
    Via the inclusions $\overline{\Q} \subseteq \overline{\Q}_p \subseteq \C$, we may analogously consider $\overline{\Q}_p$-valued (or $p$-adic) and complex-valued CM-types for~$L$. 
    \item \label{def:padicCMtype:through}
    For an abelian variety $A/\overline{\Q}$ with CM by $L$ we have an induced action of $L$ on $\mathrm{Lie}(A)$. This action has $g = \dim(A)$ one-dimensional eigenspaces on which $L$ acts via some embedding $\varphi \in \Hom(L,\overline{\Q})$. These $g$ embeddings make up a CM-type $\Phi$, see \cite[Lemma 1.5.2]{chaiconradoort14}, and we will say that $A$ has CM by $L$ \emph{through} $\Phi$. We may analogously consider $\overline{\Q}_p$-valued or $\C$-valued CM-types. 
    \item \label{def:padicCMtype:reflexfield}
    For a $K$-valued CM-type of $L$, with $K=\bar\Q, \bar\Q_p$, or $\C$, the \emph{reflex field} $E=E_{(L,\Phi)} \subseteq \overline{\Q}\subseteq K$ of $(L,\Phi)$ is defined as the number field such that $\mathrm{Gal}(K/E)$ stabilizes $\Phi$.
    Equivalently, we have 
    \[ 
    E=\Q\left( \sum_{\vphi\in \Phi} \vphi(\alpha) : \alpha \in L \right).
    \]
    We see that $E$ is the compositum $E_1 \cdot \ldots \cdot E_t$, where
    \[
    E_i=\Q\left( \sum_{\vphi\in \Phi_i} \vphi(\alpha) : \alpha \in L_i \right)
    \]
    for $i \in \{1, \ldots, t\}$ is the reflex field of the induced CM-type $\Phi_i$ of $L_i$.
\end{enumerate}
\end{df}

The following definition and theorem are generalizations to squarefree characteristic polynomials of their analogues for irreducible characteristic polynomials found in \cite{chaiconradoort14}. Definition~\ref{def:RRC} will be implemented in Section~\ref{sec:algo}.

\begin{df}[cf.~{\cite[Section 2.1.5]{chaiconradoort14}}]\label{def:RRC}
Let $A_0/\F_q$ be a $g$-dimensional abelian variety whose Frobenius endomorphism $\pi_{A_0}$ has squarefree characteristic polynomial~$h =  h_1 \cdot \ldots \cdot h_t$, with CM by the algebra $L = \Q[x]/(h) \simeq \Q(\pi_1) \times \ldots \times \Q(\pi_t) =: L_1 \times \ldots \times L_t$, and write $\pi_{A_0} = \pi = (\pi_1, \ldots, \pi_t)$.
Let $\Phi = \Phi_1 \times \ldots \times \Phi_t$ be a $p$-adic CM-type for $L$ and let $\nu$ be any place of $L$ above~$p$; then $\nu$ is nontrivial on exactly one factor $L_j$ of $L$. In particular, the completion $L_{\nu} = (L_j)_{\nu}$ and the valuation $\ord_\nu(\pi) = \ord_\nu(\pi_j)$ are well-defined. 

The pair $(L,\Phi)$ satisfies the \textit{generalized residual reflex condition} with respect to~$\pi$ if the following conditions are met:
\begin{enumerate}[1.]
\item \label{def:RRC_item_st} The \emph{Shimura-Taniyama formula} holds for $\pi$. That is, for every place $\nu$ of $L$ above $p$,
\begin{equation*}
\dfrac{\ord_\nu(\pi)}{\ord_\nu(q)}=\dfrac{\#\set{ \vphi \in \Phi \text{ s.t.~} \vphi \text{ induces } \nu }}{[L_\nu:\Q_p]}.
\end{equation*}
\item \label{def:RRC_item_refl} Let $E\subseteq\bar\Q \subseteq \bar\Q_p$ be the 
reflex field attached to $(L,\Phi)$, and let $\nu_p$ be the induced $p$-adic place of~$E$. Then the residue field $k_{\nu_p}$ of the ring of integers of $E_{\nu_p}$ can be realized as a subfield of~$\F_q$.
\end{enumerate}
\end{df}

\begin{remark}\label{rem:genRRConsimplefactors}
In Definition~\ref{def:RRC}, it is possible for each of the pairs $(L_i, \Phi_i)$ to satisfy the generalized residual reflex condition with respect to $\pi_i$, while $(L,\Phi)$ does not satisfy it with respect to $\pi$; see Example~\ref{ex:noCCOprod}.
\end{remark}

\begin{thm}[cf.~{\cite[Theorem 2.1.6, Theorem 2.5.3]{chaiconradoort14}}] \label{thm:CCOlift}
Let $A_0/\F_q$ be an abelian variety such that the characteristic polynomial $h$ of its Frobenius endomorphism is squarefree. Let  $L = \Q[x]/(h)$, and let $\Phi$ be a $p$-adic CM-type for $L$ with associated reflex field~$E$.
Suppose that $(L,\Phi)$ satisfies the generalized residual reflex field condition of Definition~\ref{def:RRC} with respect to $\pi_{A_0}$ .
Then there exists a finite extension $E \subseteq E' \subseteq \overline{\Q}_p$, with induced $p$-adic place $\nu'$ and residue field $\kappa_{\nu'} \simeq \F_q$, and a $g$-dimensional abelian variety $A/E'$ with good reduction at $\nu'$ and with CM by $L$ through $\Phi$, such that there exists an $L$-linear isogeny
\[
A \otimes \kappa_{\nu'} \sim_L A_0.
\]
\end{thm}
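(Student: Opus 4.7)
The plan is to reduce the squarefree case to the isotypic case already handled in \cite[Theorem 2.1.6, Theorem 2.5.3]{chaiconradoort14}, by decomposing $A_0$ according to the factorization $h = h_1 \cdots h_t$. Since the $h_i$ are distinct irreducibles, $A_0$ is $\F_q$-isogenous to a product $A_{0,1} \times \cdots \times A_{0,t}$ in which each $A_{0,i}$ is simple with Frobenius polynomial $h_i$. Under the product decomposition $L = L_1 \times \cdots \times L_t$, the $p$-adic CM-type splits as $\Phi = \Phi_1 \times \cdots \times \Phi_t$ and the Frobenius as $\pi = (\pi_1, \ldots, \pi_t)$, so that each $(L_i, \Phi_i, \pi_i, A_{0,i})$ is an isotypic datum to which the cited theorem could potentially be applied.

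Next I would verify that the generalized residual reflex condition for $(L, \Phi)$ with respect to $\pi$ implies the corresponding condition for each $(L_i, \Phi_i)$ with respect to $\pi_i$. The Shimura-Taniyama identity is indexed place-by-place, and every place $\nu$ of $L$ above $p$ is supported on exactly one simple factor $L_j$, so the identity for $\nu$ is verbatim the identity for the corresponding place of $L_j$; hence Shimura-Taniyama for $(L, \Phi)$ is the conjunction of the identities for the $(L_i, \Phi_i)$. For the reflex-field part, the reflex field $E = E_1 \cdots E_t$ contains each $E_i$, so any $p$-adic place of $E$ with residue field embedded in $\F_q$ restricts to a place of $E_i$ with smaller residue field, still embedded in $\F_q$.

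With this descent in hand, I would apply the isotypic case of \cite[Theorem 2.1.6]{chaiconradoort14} to each $(L_i, \Phi_i, \pi_i, A_{0,i})$ in turn, obtaining a finite extension $E_i \subseteq E_i' \subseteq \overline{\Q}_p$ with induced place $\nu_i'$ of residue field $\F_q$ and an abelian variety $A_i/E_i'$ with good reduction at $\nu_i'$, CM by $L_i$ through $\Phi_i$, and $L_i$-linearly isogenous to $A_{0,i}$ modulo $\nu_i'$. To combine these, I would take $E' := E \cdot E_1' \cdots E_t' \subseteq \overline{\Q}_p$; its residue field at the induced place $\nu'$ is the compositum of copies of $\F_q$ inside $\overline{\F}_p$, hence $\F_q$ itself. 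The abelian variety $A := \prod_{i=1}^t (A_i \otimes_{E_i'} E')$ over $E'$ then has good reduction at $\nu'$, CM by $L$ through $\Phi$, and its reduction is $L$-linearly isogenous to $A_0$ by assembling the componentwise isogenies through the product decomposition of $L$.

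The principal obstacle is step two: a priori the generalized residual reflex condition for $(L, \Phi)$ is strictly stronger than the pointwise analogues for the $(L_i, \Phi_i)$, as highlighted by Remark~\ref{rem:genRRConsimplefactors}, so it is crucial to observe that the stronger product-hypothesis nevertheless forces each of the pointwise ones in the direction we need here. Once that implication is established, the assembly in the final step is essentially formal, relying only on the stability of the residue field $\F_q$ under compositum in $\overline{\F}_p$.
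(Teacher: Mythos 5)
Your route — decomposing $A_0$ up to isogeny into isotypic pieces and applying the irreducible-case theorem of Chai–Conrad–Oort factor by factor — is a genuinely different strategy from what the paper implicitly relies on. The paper gives no written proof and simply cites CCO, treating the statement as a direct generalization in which their machinery is applied at once to the CM-algebra $L$ and its reflex field $E = E_1\cdots E_t$, rather than separately to each CM-field $L_i$. Your verification in step two is correct: the Shimura–Taniyama formula is a place-by-place condition, so it passes to each factor, and the residue field of each $E_i$ at the induced place is a subfield of that of $E$, hence still embeds in $\F_q$. (You are also right to flag, via Remark~\ref{rem:genRRConsimplefactors}, that the converse implication fails.)

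The assembly step, however, is not ``essentially formal'' as you claim, and in fact it contains a genuine gap. You assert that $E' := E\cdot E_1'\cdots E_t'$ has residue field $\F_q$ at the induced place because the residue field of a compositum equals the compositum of residue fields. That is false in general: the residue field of a compositum of $p$-adic fields can be strictly larger than the compositum of their residue fields. For instance, $\Q_3(\sqrt{3})$ and $\Q_3(\sqrt{-3})$ are both totally ramified over $\Q_3$ and so each has residue field $\F_3$, but their compositum $\Q_3(\sqrt{3},\sqrt{-3}) = \Q_3(\sqrt{3},\sqrt{-1})$ contains the unramified quadratic extension $\Q_3(\sqrt{-1})$ and therefore has residue field $\F_9$. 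If the residue field of your $E'$ jumps to $\F_{q^m}$ for some $m>1$, then $A\otimes\kappa_{\nu'}$ is an abelian variety over $\F_{q^m}$ and the required $L$-linear isogeny to $A_0/\F_q$ is no longer even well-posed. Repairing this would require either showing that the fields $E_i'$ produced by CCO can be chosen ``aligned'' — i.e.\ so that $E_1'\cdots E_t'$ remains totally ramified over the unramified extension of $\Q_p$ with residue field $\F_q$ — or else a descent argument using the CM structure to bring $A/E'$ down to a subfield whose residue field is exactly $\F_q$. Neither step is automatic, which is precisely what is circumvented by applying CCO's construction directly to $(L,\Phi)$ and using that the generalized residual reflex condition controls the residue field of the full reflex field $E$ of the algebra, not merely of each $E_i$.
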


\begin{remark}\label{rem:OL}
By \cite[Proposition 1.7.4.5]{chaiconradoort14}, any abelian variety $A$ defined over a field $K$, with CM by an algebra $L$ and $i_A: L \hookrightarrow \mathrm{End}^0(A)$, is $L$-linearly isogenous to an abelian variety $A'/K$ such that $\mathcal{O}_L \subseteq i_{A'}^{-1}(\mathrm{End}(A'))$, where $\mathcal{O}_L$ is the ring of integers of $L$; for ease of notation, for the rest of the remark we suppress the inclusion maps.

In particular, an abelian variety $A_0/\F_q$ satisfying the hypotheses of Theorem~\ref{thm:CCOlift} is $L$-linearly isogenous to a $A'_0/\F_q$ with $\mathrm{End}(A'_0) = \mathcal{O}_L$.
Suppose that $A'_0$ lifts to an abelian scheme~$\mathcal{A}'$ over a normal
domain $\mathcal{R}$ (e.g. a Dedekind domain) with fraction field $K$ and let $A' = \mathcal{A}' \otimes K$.
Then $A'$ may not satisfy that $\mathcal{O}_L \subseteq \mathrm{End}(A')$; however, the same result implies that this holds after another $L$-linear isogeny $A' \sim_L A''$. (Note on the other hand that if $\mathcal{O}_L \subseteq \mathrm{End}(C)$ for some $C/K$ with good reduction, then its reduction $\overline{C}$ will satisfy $\mathcal{O}_L \subseteq \mathrm{End}(\overline{C})$.)
By $\mathcal{R}$-flatness, $\mathcal{O}_L \subseteq \mathrm{End}(A')$ implies that $\mathcal{O}_L \subseteq \mathrm{End}(\mathcal{A}')$, cf.~\cite[2.1.4.1--2.1.4.3]{chaiconradoort14}. 

This property plays a key role in the proof of Theorem~\ref{thm:CCOlift}, and allows us to rephrase its statement in the following form. 
\end{remark}

\begin{cor}\label{cor:CMlift}
Let $A_0/\F_q$ be an abelian variety such that the characteristic polynomial $h$ of its Frobenius endomorphism $\pi_{A_0}$ is squarefree, with CM by $L = \Q[x]/(h)$ through a $p$-adic CM-type $\Phi$.
Suppose that $(L,\Phi)$ satisfies the generalized residual reflex field condition of Definition~\ref{def:RRC} with respect to $\pi_{A_0}$ .
Then there exists an abelian variety $A'_0/\F_q$ such that $A'_0 \sim_L A_0$ and such that $\mathcal{O}_L \subseteq \mathrm{End}(A'_0)$, which has a CM-lifting $A'$ over a number field $E'$ satisfying $\mathcal{O}_L \subseteq \mathrm{End}(A')$.
In particular, after an $L$-linear isogeny the variety $A_0/\F_q$ admits a CM-lifting to a $p$-adic field, namely, to the completion of $E'$ at the induced $p$-adic place.
\end{cor}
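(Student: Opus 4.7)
The plan is to combine Theorem \ref{thm:CCOlift} with the maximal-order argument outlined in Remark \ref{rem:OL}, and then to descend the resulting $p$-adic lifting to a number field. First, by Remark \ref{rem:OL} applied to $A_0$, I would choose an $L$-linear isogeny $A_0 \sim_L A'_0$ so that $\mathcal{O}_L \subseteq \End(A'_0)$; the hypotheses of Theorem \ref{thm:CCOlift} are preserved by such an isogeny, since the Frobenius corresponds to the same element $\pi \in L$ and the generalized residual reflex condition depends only on the triple $(L, \Phi, \pi)$.

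Next, I would apply Theorem \ref{thm:CCOlift} to $A'_0$, producing a CM-lifting $\tilde{A}$ over a finite extension $\tilde{E} \subseteq \overline{\Q}_p$ of the reflex field $E$, with good reduction at the induced $p$-adic place $\nu'$ and with an $L$-linear isogeny $\tilde{A} \otimes \kappa_{\nu'} \sim_L A'_0$. Applying Remark \ref{rem:OL} once more, I assume after a further $L$-linear isogeny that $\mathcal{O}_L \subseteq \End(\tilde{A})$. By the flatness argument in the remark, this $\mathcal{O}_L$-action extends to the N{\'e}ron model over the ring of integers of $\tilde{E}$ and therefore descends to its special fibre; composing with the existing isogeny then preserves the conclusion $\tilde{A} \otimes \kappa_{\nu'} \sim_L A'_0$ and ensures $\mathcal{O}_L \subseteq \End(A'_0)$ is compatible with the lifted action.

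The final step is to descend $\tilde{A}$ from the $p$-adic field $\tilde{E}$ to a number field $E'$. Here I would invoke the classical fact from CM theory that any abelian variety with CM by $L$ of degree $2g = 2\dim(\tilde{A})$ over a characteristic-zero field is defined over a number field: via the $L$-linear bijections of Corollary \ref{cor:allhoms}, the $\overline{\Q}_p$-isomorphism class of $\tilde{A}$ corresponds to that of a CM abelian variety $A'$ over a number field $E'$, which can be chosen to contain the reflex field $E$ (itself a number field by Definition \ref{def:padicCMtype}). Completing $E'$ at the $p$-adic place induced by $\nu'$ then recovers the $p$-adic lifting $\tilde{A}$ and yields the ``in particular'' assertion.

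The main obstacle should be the bookkeeping between the $p$-adic and number-field models while simultaneously preserving the $\mathcal{O}_L$-action, the $L$-linear isogeny with the special fibre, and the good-reduction property at $\nu'$. Once Corollary \ref{cor:allhoms} is in hand this descent is essentially formal, but one must verify that the algebraic models coming from CM theory can be chosen so that the isogenies on the generic fibre align with those appearing after reduction modulo $\nu'$.
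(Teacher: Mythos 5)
Your first two paragraphs reproduce the substance of the paper's argument, which is implicit in Remark~\ref{rem:OL} together with Theorem~\ref{thm:CCOlift}: apply the theorem to produce a lift over a field $E'$ with good reduction at $\nu'$ and an $L$-linear isogeny of special fibres, then replace both the lift and (if necessary) its reduction by $L$-linearly isogenous varieties whose endomorphism rings contain $\cO_L$, using \cite[Proposition 1.7.4.5]{chaiconradoort14} and the $\mathcal{R}$-flatness observation of Remark~\ref{rem:OL} to propagate the $\cO_L$-action through the abelian scheme to the special fibre. All of that is exactly right, and the observation that the generalized residual reflex condition depends only on $(L,\Phi,\pi)$ and so is preserved by $L$-linear isogenies is also correct.

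The third paragraph, however, is founded on a misreading of Theorem~\ref{thm:CCOlift}. The theorem produces a \emph{finite extension} $E \subseteq E' \subseteq \overline{\Q}_p$ of the reflex field $E$. Since $E$ is a number field (Definition~\ref{def:padicCMtype}.\ref{def:padicCMtype:reflexfield}), every finite extension of it is again a number field; there is no $p$-adic field $\tilde E$ to descend from. Remark~\ref{rem:domain} states this explicitly: Theorem~\ref{thm:CCOlift} ``lifts to a number field and hence to its completion.'' Your proposed descent step is therefore vacuous, and, as argued, it would not be rigorous anyway: Corollary~\ref{cor:allhoms} identifies $\Hom$-groups under base change between abelian varieties you already have in hand, but it does not by itself produce an algebraic model over a number field for a given CM abelian variety over a $p$-adic field, let alone preserve the good-reduction datum, the reduction isogeny, and the $\cO_L$-action. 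You should simply delete the third paragraph; the ``in particular'' assertion follows, as you note at the very end, by completing the number field $E'$ that Theorem~\ref{thm:CCOlift} already provides at the induced place $\nu'$.
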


\begin{remark}\label{rem:domain}
All of Propositions~\ref{prop:ord_lift} and \ref{prop:almord_lift} and Theorem~\ref{thm:CCOlift} provide CM-liftings of abelian varieties over finite fields to characteristic zero. In fact, all results provide liftings to $p$-adic fields: namely,  
Theorem~\ref{thm:CCOlift}
lifts to a number field and hence to its completion, which is a $p$-adic field, and Propositions~\ref{prop:ord_lift} and \ref{prop:almord_lift} lift to a finite extension of the Witt vectors $W(\F_q)$ and hence to its fraction field, which is also a $p$-adic field.
\end{remark}

\begin{remark}\label{rem:RCC}
While Theorem~\ref{thm:CCOlift} shows that the residual reflex condition is sufficient for a CM-lifting (up to isogeny) to a normal domain to exist, this condition is also necessary when the variety is simple, see \cite[2.1.4, 2.1.5]{chaiconradoort14}. 
In particular, simple ordinary abelian varieties and simple almost-ordinary abelian varieties in odd characteristic, which admit CM-liftings by Propositions~\ref{prop:ord_lift} and \ref{prop:almord_lift}, all satisfy the residual reflex condition.
\end{remark}
We conclude this section with some lifting results that will allow us, in Proposition~\ref{prop:CCO_effective}, to improve on Corollary \ref{cor:CMlift}.

Let $A_0$ be any abelian variety over a field $\F_q$.
The Dieudonn\'e module $T_p(A_0)$ splits into a direct sum of an \'etale part and a local part:
\begin{equation}\label{eq:dieudonne_dec}
    T_p(A_0) = T^{\mathrm{\acute{e}t}}_p(A_0) \oplus T^{\mathrm{loc}}_p(A_0).
\end{equation} 
An isogeny $f:A_0\to B_0$ of abelian varieties over $\F_q$ induces an injective morphism $f_p:T_p(B_0)\to T_p(A_0)$ 
which respects the decomposition \eqref{eq:dieudonne_dec}.

\begin{lemma}[{\cite[Proof of Thm 5.2]{Wat69}}]\label{lemma:Tploc_isom}
If $f:A_0\to B_0$ is a separable isogeny between abelian varieties over $\F_q$,
then the induced map $f_p^{\mathrm{loc}}:T^{\mathrm{loc}}_p(B_0)\to T^{\mathrm{loc}}_p(A_0)$ is an isomorphism.
\end{lemma}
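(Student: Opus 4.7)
The plan is to reduce the statement to a parallel fact about $p$-divisible groups. Over the perfect field $\F_q$, the connected--\'etale sequence of the $p$-divisible group $A_0[p^\infty]$ splits canonically, giving
\[
A_0[p^\infty] = A_0[p^\infty]^{\mathrm{loc}} \oplus A_0[p^\infty]^{\mathrm{\acute{e}t}},
\]
and under the (contravariant) Dieudonn\'e-style functor used here this decomposition recovers the one in \eqref{eq:dieudonne_dec}. So proving that $f_p^{\mathrm{loc}}$ is an isomorphism becomes equivalent to proving that the restricted map of connected $p$-divisible groups $f[p^\infty]^{\mathrm{loc}}\colon A_0[p^\infty]^{\mathrm{loc}} \to B_0[p^\infty]^{\mathrm{loc}}$ is an isomorphism.

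The main input is the standard fact that a separable isogeny has \'etale kernel. Since $f$ is separable by hypothesis, the finite group scheme $\ker(f)$ is \'etale over $\F_q$, and therefore so is its $p$-primary component $\ker(f[p^\infty])$. Because the connected--\'etale decomposition is functorial, $f[p^\infty]$ respects the direct sum decomposition, and hence restricts to the map $f[p^\infty]^{\mathrm{loc}}$ on local parts. The kernel of this restriction is $\ker(f[p^\infty]) \cap A_0[p^\infty]^{\mathrm{loc}}$, an \'etale subgroup scheme of a connected $p$-divisible group, and such an intersection is automatically trivial.

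Once $f[p^\infty]^{\mathrm{loc}}$ is known to have trivial kernel and to be an isogeny (it is obtained by restricting an isogeny to a functorial summand, so its source and target have the same height), it is an isomorphism of $p$-divisible groups. Applying the Dieudonn\'e-type equivalence once more, the induced map $f_p^{\mathrm{loc}}$ on Tate modules is an isomorphism, as desired. The only point requiring care is bookkeeping the conventions: the functor $T_p$ used in the paper is contravariant, so one must check that surjectivity of $f[p^\infty]^{\mathrm{loc}}$ corresponds to injectivity of $f_p^{\mathrm{loc}}$ (and likewise for kernels and cokernels). Beyond this, there is no substantial obstacle; the argument is essentially the perfectness of $\F_q$ combined with the definition of separability.
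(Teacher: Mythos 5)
Your argument is correct, and it is the standard one: a separable isogeny has \'etale kernel, so after splitting $A_0[p^\infty]$ into its connected and \'etale parts (possible since $\F_q$ is perfect), the kernel cannot meet the connected summand, and equality of heights (via the isogeny invariance of the $p$-rank) upgrades the resulting monomorphism to an isomorphism, which the contravariant Dieudonn\'e functor translates into the claimed isomorphism on local Tate modules. The paper itself offers no proof but defers to Waterhouse's Theorem~5.2, whose argument is exactly this; so your proposal matches the intended one.
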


\begin{prop}\label{prop:sepisoglifting}
Let $A_0$ and $A'_0$ be abelian varieties over $\F_q$ with CM by an algebra~$L$.
Assume that~$A_0$ admits a CM-lifting $\mathcal{A}$ to $\cR$ (cf. Definition \ref{def:CMlift}) such that $\End_L(A_0)$ also lifts.
Moreover, assume that there exists a separable isogeny $f:A_0\to A'_0$.
Then $A'_0$ also admits a CM-lifting to $\cR$ such that $\End_L(A'_0)$ also lifts. In particular, if $\End(A'_0)=\End_L(A_0')$ then such a lifting is canonical. 
\end{prop}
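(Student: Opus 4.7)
The plan is to construct a lift $\mathcal{A}'$ of $A'_0$ by lifting $\ker(f)$ and taking a quotient. Since $f$ is separable, $\ker(f) \subseteq A_0$ is étale of some order $n$ coprime to $p$, so $\mathcal{A}[n]$ is étale over $\mathcal{R}$. By Hensel's lemma (applied to the henselian local ring $\mathcal{R}$), the closed subgroup scheme $\ker(f) \subseteq A_0[n]$ lifts uniquely to a finite étale closed subgroup scheme $\mathcal{K} \subseteq \mathcal{A}[n] \subseteq \mathcal{A}$. Setting $\mathcal{A}' := \mathcal{A}/\mathcal{K}$ then gives an abelian scheme over $\mathcal{R}$ whose special fiber is canonically isomorphic to $A_0/\ker(f) \simeq A'_0$.

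Next, I would argue that $\mathcal{A}'$ has CM by $L$. Since $f$ is $L$-linear, $\ker(f)$ is $L$-stable, and by uniqueness of the étale lift, $\mathcal{K}$ is also $L$-stable. Hence the CM action $L \hookrightarrow \End^0(\mathcal{A})$ (obtained from the assumed lifting of $\End_L(A_0)$) descends along the quotient map $\mathcal{F}: \mathcal{A} \to \mathcal{A}'$, giving $\mathcal{A}'$ the structure of a CM-lifting of $A'_0$.

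The main obstacle is showing that all of $\End_L(A'_0)$ lifts to $\End_L(\mathcal{A}')$, i.e., that the reduction $\End_L(\mathcal{A}') \hookrightarrow \End_L(A'_0)$ (injective by full faithfulness of reduction for the normal $\mathcal{R}$) is surjective; both groups are orders in $L$. The plan here is to exploit complex uniformization of the generic fiber $A' = \mathcal{A}' \otimes K$: by Lemmas~\ref{lem:KtoKbar} and~\ref{lem:KbartoC} (applied via any embedding $\overline{K} \hookrightarrow \C$), $\End_L(A')$ equals the multiplier in $L$ of the lattice $\Lambda'$ with $A'_\C = (L \otimes_\Q \R)/\Lambda'$. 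This lattice is determined by the inclusion $\Lambda \subseteq \Lambda'$ with $\Lambda'/\Lambda \simeq \ker(f)(\overline{\F}_q)$ as $L$-modules, where $\Lambda$ is the lattice of $A_\C$; the isomorphism uses étaleness of $\mathcal{K}$, so that prime-to-$p$ torsion transfers between the lift and the reduction. Invoking the compatibility between complex uniformization of the generic fiber and the Centeleghe-Stix module of the special fiber (the subject of Subsection~\ref{ssec:compred}), $\Lambda'$ is identified with the Centeleghe-Stix ideal of $A'_0$, whose multiplier in $L$ is precisely $\End_L(A'_0)$. This forces the desired equality $\End_L(\mathcal{A}') = \End_L(A'_0)$.

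For the final assertion, if $\End(A'_0) = \End_L(A'_0)$, then combining the equality above with the inclusion $\End_L(\mathcal{A}') \subseteq \End(\mathcal{A}')$ and the injection $\End(\mathcal{A}') \hookrightarrow \End(A'_0)$ from reduction forces $\End(\mathcal{A}') = \End(A'_0)$, so $\mathcal{A}'$ is a canonical lifting.
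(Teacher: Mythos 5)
Your first step contains a genuine error: you assert that since $f$ is separable, $\ker(f)$ has order $n$ \emph{coprime} to $p$, so that $\mathcal{A}[n]$ is \'etale over $\mathcal{R}$. This is false. ``Separable'' means $\ker(f)$ is an \emph{\'etale} finite group scheme, but its order can perfectly well be a power of $p$. For instance, if $A_0$ is ordinary then $A_0[p]^{\mathrm{\acute{e}t}} \simeq (\Z/p\Z)^g$ is an \'etale subgroup of order $p^g$, and $A_0 \to A_0/A_0[p]^{\mathrm{\acute{e}t}}$ is a separable isogeny. In that situation $\mathcal{A}[n]$ is certainly not \'etale over $\mathcal{R}$ (it contains a connected multiplicative piece), and your Hensel's-lemma lifting of the kernel breaks down.

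This is not a cosmetic issue: even after correcting the statement to allow $p \mid n$, lifting the \'etale subgroup $\ker(f)[p^\infty]$ to a closed subgroup scheme $\mathcal{K} \subseteq \mathcal{A}$ requires the connected-\'etale sequence of $\mathcal{A}[p^k]$ over $\mathcal{R}$ to split, and over a mixed-characteristic base this sequence does \emph{not} split in general (splitting is essentially the statement that $\mathcal{A}$ is a Serre-Tate canonical lift, which is not among the hypotheses here). The paper's proof sidesteps exactly this obstacle: instead of realizing the lift of $A'_0$ as a quotient of $\mathcal{A}$, it works directly with $p$-divisible groups. Using Lemma~\ref{lemma:Tploc_isom}, the separable isogeny identifies the local parts $T_p^{\mathrm{loc}}(A'_0) \simeq T_p^{\mathrm{loc}}(A_0)$, so one may glue a lift of $A'_0[p^\infty]$ from the local part of $\mathcal{A}[p^\infty]$ (provided by Serre--Tate) together with a lift of the \'etale part of $A'_0[p^\infty]$ (which exists and is unique over the henselian $\mathcal{R}$, and crucially need not be a \emph{sub}group of $\mathcal{A}[p^\infty]$). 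The Serre--Tate deformation theorem then promotes this to a lift of $A'_0$ itself, and the $L$-action and endomorphisms come along for free by full faithfulness of the three categorical equivalences involved. Your concluding argument via complex uniformization and Centeleghe--Stix is also heavier than necessary; the endomorphism statement falls out immediately from the categorical equivalences on $p$-divisible groups and their \'etale/local parts.
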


\begin{proof}
By the Serre-Tate deformation theorem (cf. \cite[Theorem 1.2.1]{katz}), there is a lifting~$\mathcal{A}[p^\infty]$ of the $p$-divisible group $A_0[p^\infty]$ to $\mathcal{R}$; in particular, there is a lifting of its local part~$A_0[p^\infty]^{\mathrm{loc}}$.
Further, by \cite[18.3.2]{EGAIV.4}, there is a lifting $\cA'[p^\infty]^{\mathrm{\acute{e}t}}$ of the \'etale part of $A'_0[p^\infty]$ to~$\mathcal{R}$.
Moreover, both lifting results are equivalences of categories.
Define 
\[ G := \cA'[p^\infty]^{\mathrm{\acute{e}t}} \oplus \cA[p^\infty]^{\mathrm{loc}}. \]
By Lemma \ref{lemma:Tploc_isom}, the separable isogeny $f:A_0\to A'_0$ induces an isomorphism of Dieudonn{\'e} modules $T_p^{\mathrm{loc}}(A'_0)\simeq T_p^{\mathrm{loc}}(A_0)$.
Hence, by the categorical equivalence between $p$-divisible groups and Dieudonn\'e modules, the reduction $G_0$ of $G$ to $\F_q$ satisfies
\[ G_0 = A'_0[p^\infty]^{\mathrm{\acute{e}t}} \oplus A_0[p^\infty]^{\mathrm{loc}} \simeq A'_0[p^\infty]. \]
That is, we can lift $A'_0[p^\infty]$ to $\cR$. Therefore, again by the Serre-Tate deformation theorem $A'_0$ lifts to $\cR$, as well.
By fully faithfulness of the lifting constructions, we also have that 
\[ \End_L( A'_0[p^\infty] ) \simeq \End_L(A'_0[p^\infty]^{\mathrm{\acute{e}t}}) \oplus \End_L(A_0[p^\infty]^{\mathrm{loc}})\]
lifts to $\cR$. Hence, we conclude that
\[ \End_L(A'_0) \simeq \End_L(\cA').\]
\end{proof}

\section{Algebraic descriptions of abelian varieties} \label{sec:cats}
In this section, we describe an algebraic construction and two categorial equivalences which we will use to study abelian varieties.
In Subsection~\ref{ssec:unif} we explain how to associate a fractional ideal to any CM abelian variety over a $p$-adic field. 
On the other hand, the equivalences in Subsections~\ref{ssec:CS} (from \cite{CentelegheStix15}) and \ref{ssec:Stefano} (from \cite{MarAbVar18}) together yield a functor $\mathcal{G}$ from certain abelian varieties over finite fields to certain fractional ideals. In Subsection~\ref{ssec:compred}, we discuss the compatibility of these descriptions under reductions.

\subsection{Complex uniformization and fractional ideals}\label{ssec:unif}

In this subsection, we consider abelian varieties over $p$-adic fields with complex multiplication, and provide an association with fractional ideals.\\

Let $A$ be an abelian variety defined over a $p$-adic field $K$ and with CM by an algebra~$L$ through a CM-type $\Phi$.
Fix an inclusion $i: L \hookrightarrow \mathrm{End}^0(A)$ and identifications $K \hookrightarrow \overline{K} \simeq \mathbb{C}$, and consider the complex abelian variety $A_{\mathbb{C}} := A \otimes \C$.
By complex uniformization, there exists a fractional $S$-ideal $I$ in~$L$, where $S := i^{-1}(\mathrm{End}(A))$, such that
\begin{equation}\label{eq:Cunif}
A_{\mathbb{C}}(\mathbb{C}) \simeq \mathbb{C}^g/\Phi(I).
\end{equation}
In particular, $S = (I:I) := \{ x \in L: xI \subseteq I\}$.

Let $A'$ be a second abelian variety over $K$ with the same CM-type $\Phi$ and fix an inclusion $i': L \hookrightarrow \End^0(A')$, so that $A'_{\mathbb{C}}(\mathbb{C}) \simeq \mathbb{C}^g/\Phi(I')$ for some fractional $S'$-ideal~$I'$ in $L$, with $S' := (i')^{-1}(\End(A'))$. From the description of $A_{\mathbb{C}}(\mathbb{C})$ and $A'_{\mathbb{C}}(\mathbb{C})$ 
as tori we get a bijection 
\begin{equation*}
    \Hom_L(A_{\mathbb{C}},A'_{\mathbb{C}}) \leftrightarrow (I': I):= \{ x \in L : xI \subseteq I' \}. 
\end{equation*}

We now use the language of ideal multiplications and transforms, see e.g.~\cite[Chapter 3.2, p.57]{lang}, to give another description of $\Hom_L(A,A')$ that will be used in Lemma~\ref{lem:HomAA'red} and Proposition~\ref{prop:redhomS}. 
Put $\mathfrak{a} = (I : I')$ and assume that $(S:\mathfrak{a})I=I'$.
Then there exists an $\mathfrak{a}$-multiplication $\lambda=\lambda_{\mathfrak{a}}: A_{\mathbb{C}} \to A'_{\mathbb{C}}$.
In other words, $A'_{\mathbb{C}}$ may be identified with the $\mathfrak{a}$-transform of $A_{\mathbb{C}}$.
Assume further that $\mathfrak{a} = (I : I')$ is invertible in $S$, that is, $(S:\mathfrak{a})\mathfrak{a}=S$. It then follows that 
\begin{equation}\label{eq:samemultring}
S'=(I':I')=((S:\mathfrak{a})I:(S:\mathfrak{a})I)=(I:I)=S,    
\end{equation}
see \cite[Proposition~4.1]{MarICM18}, and that $(S:\mathfrak{a})=\mathfrak{a}^{-1}=(I':I)$, see \cite[Corollary~4.5]{MarICM18}. 
We again obtain a bijection
\begin{equation}\label{eq:Cunifhom}
    \Hom_L(A_{\mathbb{C}},A'_{\mathbb{C}}) = \lambda \circ i\left(\mathfrak{a}^{-1}\right) \leftrightarrow (I':I),
\end{equation}
by arguing as in the proof of \cite[Chapter 3, Proposition 2.4, p.58]{lang}. In that proof it is assumed that $S$ is maximal, a condition that can be replaced with
$\mathfrak{a}$ being invertible in $S$. It follows from Lemmas~\ref{lem:KtoKbar} and~\ref{lem:KbartoC} that also
\begin{equation}\label{eq:Cunifhom2}
    \Hom_L(A,A') \leftrightarrow (I':I).
\end{equation}
\begin{df}\label{def:It}
\begin{enumerate}
    \item For $I\in \cI(R_{w})$ denote by $I^t$ the \emph{trace dual ideal} of $I$, defined by
\[ 
I^t :=\set{ a \in L : \Tr_{L/\Q}(aI)\subseteq \Z }. 
\]
We see that $I^t$ is also a fractional $R_{w}$-ideal.
\item 
Further denote
\[ 
\overline{I} := \{ \bar{x} : x \in I \},
\]
where $x \mapsto \overline{x}$ denotes the involution of $L$.
Clearly, $\overline{I}$ is again a fractional $R_w$-ideal.
\end{enumerate}
\end{df}

\begin{remark}
If $A' = A^{\vee}$ is the dual variety of $A$, we have that 
\begin{equation}\label{eq:Cunifvee}
A^{\vee}_{\mathbb{C}}(\mathbb{C}) \simeq \mathbb{C}^g/\Phi(\overline{I}^t)
\end{equation}
is the dual of the complex torus, by \cite[II.9, p.86]{Mum08}.
If $S = i^{-1}(\End(A))$ satisfies $S = \overline{S}$, then Equation~\eqref{eq:Cunifhom2} reads
\begin{equation*}
    \Hom_L(A,A^{\vee}) \leftrightarrow (\overline{I}^t: I).
\end{equation*}
\end{remark}

With this in mind, we define the following notation.

\begin{df}\label{def:H}
For $A$ as above, satisfying Equation~\eqref{eq:Cunif}, we write 
\begin{equation}\label{eq:H}
\mathcal{H}(A) = I.
\end{equation}
For abelian varieties $A, A'$ over $K$ with the same CM-type $(L, \Phi)$ and such that $\mathcal{H}(A) = I$ and $\mathcal{H}(A')=I'$, in view of~\eqref{eq:Cunifhom2} we define
\begin{equation}\label{eq:Hhom}
\mathcal{H}(\Hom_L(A,A')) := \Hom_L(\mathcal{H}(A), \mathcal{H}(A')) = (I':I).
\end{equation}
\end{df}

\begin{remark}\label{rem:H}
The ideal $I$ in \eqref{eq:H}, and hence the notation $\mathcal{H}$, are well-defined up to $L$-isomor-phism, since \eqref{eq:Cunif} is so. In addition, since the construction of the dual abelian variety is compatible with base change, Equation~\eqref{eq:Cunifvee} implies that $\mathcal{H}(A^{\vee}) = \overline{I}^t$. 
Thus, despite the fact that the association $A \mapsto I$ is not categorical, it will suffice for our purpose of computing polarizations.
\end{remark}
\subsection{The Centeleghe-Stix equivalence}\label{ssec:CS}

Before stating the equivalence, we collect some definitions and notation from \cite{CentelegheStix15}, which will also be used in the next subsections.

\begin{notn}\label{not2}
Let $\AV(q)$ denote the category of abelian varieties over the finite field $\F_q$ of characteristic $p$.

For $A \in \AV(q)$ of dimension~$g$, let $h = h_A$ be the characteristic polynomial of its Frobenius endomorphism. 
Then $h \in \mathbb{Z}[x]$ has degree $2g$, and the set $w = w_A$ of its complex roots consists of \emph{Weil $q$-numbers}, i.e., algebraic integers with complex absolute value $\sqrt{q}$ under any complex embedding (up to conjugacy).
By Honda-Tate theory, every such $h$ is the characteristic polynomial of Frobenius for some $A \in \AV(q)$.

Moreover, let $\AV_h(q)$ be the full subcategory of $\AV(q)$ consisting of abelian varieties $A$ whose Frobenius endomorphism $\pi_A$ has characteristic polynomial~$h$. Then the objects of $\AV_h(q)$ form exactly the isogeny class corresponding to $h$; we also call $\AV_h(q)$ \emph{the isogeny class}
corresponding to the \emph{Weil ($q$-)polynomial}~$h$. 

For a set $w$ of Weil $q$-numbers, we also define the full subcategory $\AV_w(q)$ of $\AV(q)$ consisting of abelian varieties $A$ for which $w_A \subseteq w$.

Throughout this subsection (and the next), we specialize to prime fields, i.e., we consider $\F_q = \F_p$.
Let $\AVcs{p}$ be the full subcategory of $\AV(p)$ whose objects $A$ satisfy that $w_A$ does not contain the real Weil $p$-numbers $\pm \sqrt{p}$.
For a Weil $p$-polynomial $h$ with roots $w \not\supseteq \{ \pm \sqrt{p} \}$, we have the inclusions of subcategories $\AV_h(p) \subseteq \AV_w(p) \subseteq \AVcs{p}$.

Let $w$ be any set of non-real Weil $p$-numbers, which we identify with the set of roots of some Weil $p$-polynomial $h$, and choose representatives $w = \{\pi_1, \ldots, \pi_r \}$.
Define the ring homomorphism
\[
\begin{split}
\Z[x,y]/(xy-p) & \to \prod_{i=1}^r \Q(\pi_i), \\
x & \mapsto (\pi_1, \ldots, \pi_r).
\end{split}
\]
It factors through a minimal central order denoted by $R_w$, cf.~\cite[Definition 2]{CentelegheStix15}.

For the remainder of this subsection, we only consider Weil polynomials $h$ satisfying the following condition:
\begin{equation*}
\text{\bfseries $(\dagger)$} \qquad \qquad h \text{ is a squarefree Weil ${p}$-polynomial that does not have real roots.}
\end{equation*}
Squarefree means that $h$ is a finite product of distinct irreducible monic polynomials. In this case, $\AV_h(p) =  \AV_w(p)$.

Denote by $L = \Q[x]/(h)$ the {\'e}tale algebra over $\Q$. Then equivalently, $R_w$ is the order in~$L$ generated by $\pi = x \pmod{h}$ and $p/\pi$, so we identify $\pi$ with the image of $F$ and $p/\pi$ with that of $V$. 
The order $R_w$ has finite index in $\cO_L$, the integral closure of $\Z$ in $L$. 
\end{notn}

We now briefly recall the main definitions and results of \cite{CentelegheStix15}.

\begin{df}\label{def:Mpcs}
Define the category
\[
\Mcs{p}:=\set{
\parbox[p]{30em}{$(T,F)$, where $T$ is a free finitely generated $\Z$-module and \\
$F\in \End_\Z(T)$ is such that\\
- $F\otimes \Q$ is semisimple with eigenvalues of absolute value $\sqrt{p}$\\
- the characteristic polynomial of $F \otimes \Q$ does not have real roots\\
- there exists $V\in\End_\Z(T)$ such that $FV=VF=p$
}},
\]
and for each set $w$ of non-real Weil $p$-numbers, define the subcategory $\Mcs{p}_w$ whose objects satisfy that $F \otimes \Q$ has eigenvalues in $w$.
\end{df}

\begin{thm}[{\cite[Theorem 1]{CentelegheStix15}}]\label{thm:CS}
For each set $w$ of non-real Weil $p$-numbers, there exists an antiequivalence of categories
\begin{equation}\label{eq:CSw}
\cF_w: \AV_w(p) \to \Mcs{p}_w.
\end{equation}
These induce an antiequivalence of categories
\[
\cF: \AVcs{p} \to \Mcs{p}.
\]
\end{thm}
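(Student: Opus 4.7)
The plan is to construct the functor $\cF_w$ explicitly and then verify fully faithfulness and essential surjectivity. The natural candidate sends $A \in \AV_w(p)$ to a $\Z$-module $T(A)$ equipped with a Frobenius endomorphism $F$, built by integrally assembling the $\ell$-adic Tate modules $T_\ell(A)$ for $\ell \neq p$ together with a covariant Dieudonn{\'e} module at $p$. The crucial simplification from working over the prime field $\F_p$ is that $W(\F_p) = \Z_p$, so all pieces are honestly $\Z$-modules after tensoring over the correct local ring, and one can consistently glue them. To verify that $(T(A), F) \in \Mcs{p}_w$, I would invoke (i) Tate's theorem on the semisimplicity of Frobenius acting on $T_\ell(A) \otimes \Q_\ell$, which yields semisimplicity of $F \otimes \Q$; (ii) the Weil Riemann hypothesis to ensure the eigenvalues have absolute value $\sqrt{p}$ and lie in $w$; (iii) the hypothesis that $w$ avoids $\pm\sqrt{p}$ to rule out real eigenvalues; and (iv) the existence of the Verschiebung $V$ satisfying $FV = VF = p$.

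For fully faithfulness, the input is Tate's theorem for abelian varieties over finite fields: the natural map
\[
\Hom(A,B)\otimes \Z_\ell \;\longrightarrow\; \Hom_{F}(T_\ell(A), T_\ell(B))
\]
is an isomorphism for every prime $\ell$, including $\ell = p$ via the Dieudonn{\'e} module. Combined with the integral gluing used in the definition of $T$, this yields the desired bijection $\Hom(A,B) \simeq \Hom_{\Mcs{p}_w}(\cF_w(B), \cF_w(A))$. The antiequivalence (rather than equivalence) arises from a choice of convention: one could equivalently work with the dual variety, which inverts the arrows.

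Essential surjectivity is the main obstacle. Given $(T,F) \in \Mcs{p}_w$, the characteristic polynomial of $F$ is a Weil $p$-polynomial with roots in $w$ and no real roots. By Honda--Tate theory (as recalled in Notation~\ref{not2}), the corresponding isogeny class is non-empty, so there exists an abelian variety $A_0$ realizing this polynomial. The task is then to promote $(T,F)$ to an actual $A \in \AV_w(p)$, not merely to an object in the isogeny class of $A_0$. The strategy is to show that $R_w$-lattices of the appropriate local shape correspond bijectively to isomorphism classes of varieties in the isogeny class of~$A_0$: having fixed $A_0$, any other $A$ in the isogeny class is realized as a quotient or kernel of an isogeny whose local kernels at every prime (including at $p$, via the $p$-divisible group) match the prescribed lattice data. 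Over $\F_p$ the $p$-adic piece of $T$, equipped with $F$ and $V = pF^{-1}$, determines the Dieudonn{\'e} module uniquely, which in turn (by the Serre--Tate deformation theorem combined with the classical equivalence between $p$-divisible groups and Dieudonn{\'e} modules) determines the $p$-divisible group; gluing with the prime-to-$p$ data recovers $A$.

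The hardest steps will be (a) producing the functorial integral construction in which the Frobenius on the $p$-adic part matches the integral Frobenius on the prime-to-$p$ Tate modules, and (b) establishing essential surjectivity in a way that outputs a genuine abelian variety rather than merely a lattice. Both steps crucially exploit the two standing hypotheses: the restriction to $\F_p$ makes all Witt rings equal to $\Z_p$ so that the gluing takes place over a single base, and the exclusion of $\pm\sqrt{p}$ avoids the supersingular slope-$1/2$ locus where endomorphism algebras of the $p$-divisible group become non-commutative and the functor would fail to be faithful. The compatibility with the Deligne equivalence on the ordinary subcategory, as noted in Remark~\ref{rem:ordlift}, serves as a useful sanity check throughout.
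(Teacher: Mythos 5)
Your proposed route is genuinely different from the one in the paper, and it contains a gap at precisely the step that the paper's construction is designed to resolve. You propose to define $\cF_w(A)$ by ``integrally assembling'' the $\ell$-adic Tate modules $T_\ell(A)$ (for $\ell \neq p$) together with the Dieudonn{\'e} module at $p$. But each $T_\ell(A)$ is only a $\Z_\ell$-module, and $T_p(A)$ is only a $\Z_p$-module; there is no canonical free $\Z$-module that localizes to all of these simultaneously. Producing such a global lattice functorially requires fixing a $\Q$-rational structure inside the adelic Tate module, and that choice is exactly the missing data. The observation that $W(\F_p)=\Z_p$ removes the obstruction that the Dieudonn{\'e} module would otherwise live over a larger Witt ring, but it does not solve the gluing problem, which persists even over $\F_p$. (Deligne's construction for ordinary varieties sidesteps this by using the Serre--Tate lift to produce $H_1$ of a complex abelian variety as the global lattice; that option is unavailable here.)

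The paper, following Centeleghe--Stix, instead defines the functor as $\cF_w := \Hom(-, A_w)$, where $A_w$ is a carefully chosen abelian variety in the isogeny class with \emph{minimal} endomorphism ring $\End(A_w) \simeq R_w = \Z[\pi, p/\pi]$. This is a genuine $\Z$-module by construction, with no gluing needed; the local comparison with Tate/Dieudonn{\'e} modules is then used as a \emph{tool} to prove that $\Hom(-,A_w)$ gives an antiequivalence onto reflexive $R_w$-modules (which is in turn identified with $\Mcs{p}_w$), not as the definition of the functor. The passage from the finite sets $w$ to all of $\AVcs{p}$ is then an ind-limit over a compatible system of choices of $A_w$. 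Your outline correctly identifies the roles of Tate's theorem, Honda--Tate, and the exclusion of $\pm\sqrt{p}$, and your essential-surjectivity sketch is in the right spirit once one has the base point $A_w$; but without the representability trick, the construction does not get off the ground.
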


\begin{proof}[Sketch of proof]
Let $w$ be any finite set of non-real Weil $p$-numbers.
By Honda-Tate theory, there exists an isogeny class of abelian varieties $B$ with $w_B = w$. 
In particular, there exists an abelian variety $A_w$ in this class whose endomorphism ring is minimal, i.e., with $\End_{\F_p}(A_w) = \Z[\pi_A, p/\pi_A] \simeq R_w$ \cite[Proposition 21]{CentelegheStix15}. 

On $\AV_w(p)$ the subfunctor
\begin{equation}\label{eq:cFw}
\cF_w := \Hom(- , A_w)
\end{equation}
furnishes an antiequivalence between $\AV_w(p)$ and reflexive $R_w$-modules, \cite[Theorem 25]{CentelegheStix15}. The latter category is in turn equivalent with the subcategory $\Mcs{p}_w$, \cite[Proposition 14]{CentelegheStix15}; by abuse of notation we again denote the resulting functor from $\AV_w(p)$ to $\Mcs{p}_w$ by $\cF_w$.

Moreover, the varieties $A_w$ can be chosen in such a way that they yield an ind-system, so that the direct limit of the subfunctors over subsets $w$ of the set of non-real Weil $p$-numbers~$\cW(p)$
\[
T(A) := \varinjlim_{w \subseteq \cW(p)} \Hom(A,A_w)
\]
stabilizes for any $A \in \AVcs{p}$. That is, for $w(A) \subseteq w \subseteq w'$ we have $\Hom(A,A_w) \xrightarrow{\sim} \Hom(A,A_{w'})$, cf.~\cite[Proposition 26]{CentelegheStix15}.
Finally, the linear map $F := T(\pi_A): T(A) \to T(A)$ is induced from the action of $F$ in $R_w$, as $w$ ranges over the finite subsets of $\cW(p)$. 
This concludes the construction of the ind-representable functor $\cF: A \mapsto (T(A),F)$.
\end{proof}

\begin{remark}\label{rmk:leftSrightRw}
In the Proof of Theorem \ref{thm:CS} we fix an isomorphism $i_{A_w}:R_w \xrightarrow{\sim} \End(A_w)$.
For any other abelian variety $A \in \AV_w(p)$, we may and do choose an isomorphism $i_A:S\xrightarrow{\sim} \End(A)$, where $S$ is an order in $L$ containing $R_w$, such that
\begin{equation*}
i_A^{-1}(\pi_A) = \pi = i_{A_w}^{-1}(\pi_{A_w}). 
\end{equation*}
In this way the left $S$-module structure and right $R_w$-module structure on $\cF_w(A)$ are compatible.
\end{remark}

The varieties $A_w$ with minimal endomorphism ring $R_w$ play an important role in the proof of Theorem~\ref{thm:CS}. The following fact about these varieties will be crucial later.

\begin{lemma}[{\cite[Remark 23, Theorem 37]{CentelegheStix15}}]\label{lem:Awchoice}
For any finite set $w$ of non-real Weil $p$-numbers, the set of isomorphism classes of abelian varieties $A_w$ with minimal endomorphism ring $R_w$ is equipped with a free and transitive action by $\mathrm{Pic}(R_w)$.\qed
\end{lemma}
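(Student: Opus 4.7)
The plan is to transport the classification problem through the antiequivalence $\cF_w$ of Theorem \ref{thm:CS} and reduce it to the classical statement that $\Pic(R_w)$ acts freely and transitively on isomorphism classes of invertible $R_w$-modules. The central observation is that under $\cF_w$, an abelian variety $A \in \AV_w(p)$ with $\End(A) \simeq R_w$ corresponds to a rank-one reflexive (hence, since $R_w$ is an order in the \'etale algebra $L$, invertible) $R_w$-module, and vice versa.

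First, I would fix a reference object $A_w$ with $\End(A_w) \simeq R_w$ (guaranteed by \cite[Proposition 21]{CentelegheStix15}) and compute $\cF_w(A_w) = \Hom(A_w,A_w) \simeq R_w$ as an $R_w$-module. For any other abelian variety $A'_w$ with $\End(A'_w) \simeq R_w$, the module $M := \cF_w(A'_w) = \Hom(A'_w, A_w)$ is reflexive (by \cite[Theorem 25]{CentelegheStix15}) and carries compatible left and right $R_w$-actions with the same underlying multiplier ring (cf.\ Remark \ref{rmk:leftSrightRw}); after tensoring to $L$ one sees it has $L$-rank one. A reflexive rank-one module over an order in an \'etale algebra is exactly an invertible fractional ideal, so $M$ represents a class in $\Pic(R_w)$.

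Next, I would define the action: for $[\mathfrak{a}] \in \Pic(R_w)$ and $A'_w$ corresponding to $M$ under $\cF_w$, let $[\mathfrak{a}] \cdot A'_w$ be the abelian variety corresponding to $\mathfrak{a} \otimes_{R_w} M$ (which is again invertible, hence reflexive, and whose Frobenius action is inherited from $M$ since $\pi \in R_w$ acts through $R_w$). Transitivity is immediate: given two such $A'_w, A''_w$ with associated invertible modules $M, M'$, the class $[M' \otimes_{R_w} M^{-1}] \in \Pic(R_w)$ carries one to the other. Freeness reduces to the statement that $\mathfrak{a} \otimes_{R_w} M \simeq M$ as $R_w$-modules forces $\mathfrak{a} \simeq R_w$, which follows by tensoring with $M^{-1}$.

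The main subtlety I expect is not the module theory but checking that the action really lands in \emph{isomorphism classes of abelian varieties} compatibly, i.e.\ that tensoring by $\mathfrak{a}$ preserves the full structure $(T,F)$ of Definition \ref{def:Mpcs} (in particular that $F$ remains semisimple with the right eigenvalues and that a $V$ with $FV=VF=p$ still exists after tensoring). This is essentially automatic because $\pi$ and $p/\pi$ both lie in $R_w$, so they act on $\mathfrak{a} \otimes_{R_w} M$ via their action on $M$; but it must be spelled out to confirm that the $\Pic(R_w)$-action on invertible modules indeed descends, via $\cF_w^{-1}$, to an action on isomorphism classes of $A_w$'s with minimal endomorphism ring.
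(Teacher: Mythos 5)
The paper does not actually give a proof here: the lemma carries a \verb|\qed| inside the statement and simply cites \cite[Remark 23, Theorem 37]{CentelegheStix15}. Your proposal supplies a self-contained argument along the lines one would expect from that reference, and the overall strategy (transport the classification through the antiequivalence $\cF_w$ and reduce to the Picard-group action on invertible ideals) is the right one. The treatment of the $(T,F)$-structure under tensoring, and the freeness/transitivity bookkeeping, are fine.

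There is, however, a genuine gap. You assert that ``a reflexive rank-one module over an order in an \'etale algebra is exactly an invertible fractional ideal.'' This is false in general. Take $R=\Z[3i]\subset\Z[i]$: the conductor $\mathfrak f = 3\Z[i]$ is a fractional $R$-ideal of $L$-rank one, and one checks that $(R:\mathfrak f)=\Z[i]$ and $(R:(R:\mathfrak f))=\mathfrak f$, so $\mathfrak f$ is reflexive; yet $\mathfrak f\cdot(R:\mathfrak f)=3\Z[i]\neq R$, so $\mathfrak f$ is not invertible (its multiplicator ring is $\Z[i]$, not $R$). Reflexivity together with rank one does not buy invertibility. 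What you actually need, and what Centeleghe--Stix establish, is the nontrivial fact that $R_w$ is a \emph{Gorenstein} order (\cite[Theorem 11]{CentelegheStix15}, invoked elsewhere in this paper), combined with the observation that $\End(A'_w)=R_w$ forces $(M:M)=R_w$ under $\cF_w$, and that over a Gorenstein order a fractional ideal with multiplicator ring equal to the order is automatically invertible (cf.\ \cite[Proposition 2.11]{MarICM18}, cited in Remark~\ref{rem:Gorenstein}). You do record that the multiplicator ring is $R_w$, but then discard it and lean on the false reflexivity criterion instead; the Gorenstein input is precisely what makes the bijection with $\Pic(R_w)$ work and should be the load-bearing step.
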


Consider the functor $\cF_w$ in \eqref{eq:CSw}, which depends on an abelian variety $A_w$ with minimal endomorphism ring $R_w$ through \eqref{eq:cFw}. 
We may \emph{modify} the functor~$\cF_w$ by choosing instead another abelian variety $A'_w$ with endomorphism ring~$R_w$. 
Denoting the action given in Lemma~\ref{lem:Awchoice}  by a tensor product, we get that there exists a choice of $A'_w = J \otimes_{R_w} A_w$ for each ideal class $[J] \in \mathrm{Pic}(R_w)$. We prove the following lemma for the modified functor.

\begin{lemma}\label{lem:cFmod}
Let $h$ be a Weil polynomial as in $(\dagger)$ with set of roots $w$.
Let $J$ be an invertible ideal with $[J] \in \Pic(R_w)$,
and choose $A_w$ with minimal endomorphism ring $R_w$.
Modifying the functor~$\cF_w$ by replacing $A_w$ with another abelian variety $A'_w = A_w \otimes_{R_w} J$ with minimal endomorphism ring gives a new functor~$\cF'_w$, which satisfies $\cF_w'(A) \simeq J \otimes_{R_w} \cF_w(A)$ for any $A \in \AV_w$.
\end{lemma}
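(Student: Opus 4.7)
The plan is to invoke Serre's tensor construction for abelian varieties, which simultaneously makes sense of $A_w \otimes_{R_w} J$ and supplies the compatibility with $\Hom$ that we need. Since $J$ is an invertible (hence finitely generated projective) $R_w$-module, the construction produces an abelian variety $A'_w := A_w \otimes_{R_w} J$ carrying a ring homomorphism $R_w \to \End(A'_w)$, which is in fact an isomorphism because $J$ has generic rank one. This recovers the free transitive $\Pic(R_w)$-action on abelian varieties with minimal endomorphism ring from Lemma~\ref{lem:Awchoice}, so the hypothesis that $A'_w$ again has minimal endomorphism ring is built into the construction.

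The key step is then to establish the natural isomorphism of right $R_w$-modules
\[
\Hom(A, A_w \otimes_{R_w} J) \;\simeq\; \Hom(A, A_w) \otimes_{R_w} J,
\]
functorially in $A \in \AV_w(p)$. This is the defining property of the Serre tensor product for projective coefficient modules. I would first verify it tautologically for $J = R_w^{\oplus n}$, where both sides reduce to $\Hom(A, A_w)^{\oplus n}$, and then deduce the case of a general invertible $J$ by writing $J$ as a direct summand of a free $R_w$-module. Alternatively, one may localize at the primes of $R_w$ where $J$ becomes free, check the isomorphism locally, and glue using the invertibility data, since both sides share the same gluing cocycle. The right $R_w$-module structure on the left-hand side comes from post-composition with the $R_w$-action on $A'_w$, which by construction of $A'_w$ matches the structure coming from the $J$-factor on the right.

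The main obstacle is formal rather than substantial: one must verify that the Serre construction over the possibly non-maximal order $R_w$ behaves as expected. However, invertibility of $J$ as an $R_w$-module is exactly the input needed to reduce to the Dedekind case, and the relevant functorial properties of this construction are standard (see e.g.~the appendix of \cite{chaiconradoort14}). Once the displayed isomorphism is in hand, the lemma reduces to reading off definitions:
\[
\cF'_w(A) \;=\; \Hom(A, A'_w) \;\simeq\; \Hom(A, A_w) \otimes_{R_w} J \;=\; \cF_w(A) \otimes_{R_w} J,
\]
with naturality in $A$ inherited from that of $\Hom(-, A_w)$, and compatibility with the left $\End(A)$-module structure following from the functoriality of the Serre construction in the $A_w$-slot.
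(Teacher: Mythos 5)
Your proof is correct and follows essentially the same route as the paper, which simply cites the $\Hom$--tensor compatibility of the Serre construction from Centeleghe--Stix (their Proposition 34) and reads off the conclusion from $\cF_w = \Hom(-,A_w)$. You re-derive that key isomorphism from scratch (free case, then projectivity/localization) rather than quoting it, but the underlying idea is identical.
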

\begin{proof}
The result follows from Equation~\eqref{eq:cFw} and the fact that
    \[
    \Hom(A,A_w) \simeq \Hom(A,J \otimes_{R_w} A'_w) \simeq J \otimes_{R_w} \Hom(A,A'_w),
    \]
    cf.~\cite[Proposition 34]{CentelegheStix15}.
\end{proof}

\begin{remark}
There is a relation between the tensor construction and $J$-multiplication. Namely, let $A$ be a CM abelian variety over a field with $R=\End(A)$ and let $J$ be an \emph{invertible} $R$-ideal. If $A \to A'$ is a $J$-multiplication, then $A'$ is isomorphic to $J^{-1} \otimes_{R} A$. This follows from \cite[Corollary A.4]{Wat69} together with the fact that $J \otimes_R N \cong \Hom_R(J^{-1},N)$ for any $R$-module $N$.
\end{remark}

\subsection{Finite fields and fractional ideals}\label{ssec:Stefano}

Using Notation~\ref{not2} again, in particular, specializing to finite prime fields $\mathbb{F}_p$ and using condition~$(\dagger)$, we now recall the main definitions and results of \cite{MarAbVar18}.

\begin{df}\label{def:IR}
Define the category
    \[
    \cI(R_w) := \Big\{ \text{ fractional $R_w$-ideals } \Big\},
    \]
    whose morphisms are $R_w$-linear morphisms.
\end{df}

\begin{thm}[{\cite[Corollary 4.4]{MarAbVar18}}] \label{thm:Mar}
For any Weil polynomial $h$ as in $(\dagger)$, there exists an antiequivalence of categories 
\[
\cG: \AV_h(p) \to \cI(R_w).
\]
The functor $\cG$ is linear over $L = \Q[x]/(h)$.
\end{thm}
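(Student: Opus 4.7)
The plan is to combine the Centeleghe--Stix antiequivalence of Theorem \ref{thm:CS} with a concrete description of its essential image in the squarefree setting. Let $h$ satisfy $(\dagger)$ with roots $w$, and write $L = \Q[x]/(h)$; squarefreeness of $h$ makes $L$ an \'etale $\Q$-algebra of dimension $2g$ containing $R_w$ as an order generated by the images of $\pi$ and $p/\pi$.

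First I would restrict $\cF_w$ from \eqref{eq:CSw} to $\AV_h(p) = \AV_w(p)$ and analyse its values. For $A$ in this isogeny class, $\cF_w(A) = \Hom(A,A_w)$ naturally carries a right $R_w$-action through $R_w \simeq \End(A_w)$ (cf.~Remark \ref{rmk:leftSrightRw}). Since $h$ is squarefree, every $A \in \AV_h(p)$ has commutative endomorphism algebra $\End^0(A) = \Q[\pi_A] = L$, and since $A$ is isogenous to $A_w$, $\Hom^0(A,A_w)$ is a free $L$-module of rank one. Consequently $\cF_w(A)$ is a full-rank $R_w$-sublattice of $\Hom^0(A,A_w)$.

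I would then define $\cG(A) \subseteq L$ as the image of $\cF_w(A)$ under any $L$-linear isomorphism $\Hom^0(A,A_w) \xrightarrow{\sim} L$; such an isomorphism exists and is unique up to multiplication by an element of $L^*$, so $\cG(A)$ is well-defined as an isomorphism class in $\cI(R_w)$. On a morphism $\varphi: B \to A$ the functor acts by pre-composition $\varphi^* : \Hom(A, A_w) \to \Hom(B, A_w)$, which is $R_w$-linear, hence corresponds to an $R_w$-linear map between the associated fractional ideals; contravariance is built in. The $L$-linearity of $\cG$ follows since the right $\End^0(A_w) = L$-action on $\Hom^0(A,A_w)$ transports to multiplication by $L$ on the identified fractional ideal.

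For the antiequivalence property, full faithfulness is inherited directly from $\cF_w$, once one observes that $R_w$-linear morphisms between fractional $R_w$-ideals in $L$ are precisely $R_w$-linear morphisms of the underlying modules. For essential surjectivity, every fractional $R_w$-ideal $I \subseteq L$ gives rise to an object $(I, F_\pi)$ in $\Mcs{p}_w$, where $F_\pi$ is multiplication by $\pi \in R_w \subset L$: as $I$ is finitely generated and torsion-free it is free of finite rank as a $\Z$-module, $F_\pi \otimes \Q$ acts as multiplication by $\pi$ on $L$ and is therefore semisimple with characteristic polynomial a power of $h$ whose roots lie in $w$, and $V = p/\pi \in R_w$ satisfies the required identity. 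By Theorem \ref{thm:CS} this object equals $\cF_w(A)$ for some $A \in \AV_h(p)$, and then $\cG(A) \cong I$. The main obstacle I anticipate is the bookkeeping of the simultaneous left $\End(A)$-action and right $R_w$-action, together with verifying that the chosen identification $\Hom^0(A,A_w) \simeq L$ is compatible with the Frobenius structure in $\Mcs{p}_w$; this is where the hypothesis $(\dagger)$ is essential, as it guarantees commutativity of all relevant endomorphism algebras and hence a canonical $L$-module structure on each $\cF_w(A)$.
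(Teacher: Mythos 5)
The paper itself does not prove this theorem; it cites \cite[Corollary~4.4]{MarAbVar18} and only describes the construction in one sentence: choose $A_w$ as in \eqref{eq:cFw} and embed $\cF_w(A)$ into $L$ as an $R_w$-module. Your reconstruction follows exactly this blueprint — restrict $\cF_w$ to the squarefree class, use that $\Hom^0(A,A_w)$ is a free rank-one $L$-module, and identify $\cF_w(A)$ with a full-rank $R_w$-lattice in $L$ — so the approach is the right one.

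One point does need tightening, and it is precisely the ``bookkeeping'' you flag at the end. You define $\cG(A)$ only as an \emph{isomorphism class} in $\cI(R_w)$, but the target $\cI(R_w)$ is a category whose objects are actual fractional ideals, not isomorphism classes, and $\cG$ must also act on morphisms, so an actual ideal must be chosen for each object before $\cG(\varphi)$ can be written down. The fix is to fix, once and for all, a basis element $\lambda_{A}\in\Hom^0(A,A_w)$ for each $A$ (equivalently a concrete $L$-linear isomorphism $\Hom^0(A,A_w)\xrightarrow{\sim}L$), set $\cG(A)$ to be the resulting lattice, and then observe that a different family of choices produces a naturally isomorphic functor; this is exactly what \eqref{eq:embFw} encodes. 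Two smaller slips: the characteristic polynomial of multiplication by $\pi$ on a fractional $R_w$-ideal $I\subset L$ is exactly $h$, not a proper power of $h$, since $I$ has $\Z$-rank $\deg h=\dim_{\Q}L$; and $\End(A_w)$ acts by \emph{post}composition (so with the usual conventions this is a left action on $\Hom(A,A_w)$, though of course the distinction is immaterial since $L$ is commutative, as you note).
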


The construction of the functor consists of choosing, for a choice of $A_w$ in~\eqref{eq:cFw}, an embedding into $L$ of $\mathcal{G}(A) = \cF_w(A)$ as an $R_w$-module.
In Subsection~\ref{ssec:compred}, we will discuss this embedding in much more detail.

Recall from Lemma~\ref{lem:cFmod} and the comments preceding it that we may modify the functor $\cF_w$ by choosing a different abelian variety $A_w$ with endomorphism ring $R_w$. Such a modification also affects the functor $\cG$.

\begin{lemma}\label{lem:cGEnd}
Let $h$ be a Weil polynomial satisfying $(\dagger)$ with set of roots $w$.
Let $J$ be an invertible ideal with $[J] \in \Pic(R_w)$, let $A \in \AV_w$, and choose $A_w$ with minimal endomorphism ring~$R_w$.
Modifying the functor~$\cF_w$ by replacing $A_w$ with another abelian variety $A'_w = A_w \otimes_{R_w} J$ with minimal endomorphism ring is equivalent to replacing $\cG(A)$ with $J \cG(A)$.
\end{lemma}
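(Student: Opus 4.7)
The plan is to reduce this to Lemma~\ref{lem:cFmod} together with the fact that multiplication by an invertible ideal realizes the tensor product inside $L$.

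First I would recall how the functor $\cG$ is constructed from $\cF_w$: given $A \in \AV_w(p)$, the object $\cF_w(A) = \Hom(A, A_w)$ is a reflexive $R_w$-module, hence (once a choice is made) can be embedded into $L = \cF_w(A)\otimes_\Z \Q \cong L$ as a fractional $R_w$-ideal, and this embedded ideal is $\cG(A)$. The freedom in this embedding is by scaling by an element of $L^\ast$, so $\cG(A)$ is well-defined as a fractional ideal up to $L^\ast$-multiplication, which matches the fact that $\cG$ is defined up to isomorphism in $\cI(R_w)$.

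Next, I would invoke Lemma~\ref{lem:cFmod} to identify the modified functor: if $A_w$ is replaced by $A'_w = A_w \otimes_{R_w} J$, then $\cF'_w(A) \cong J \otimes_{R_w} \cF_w(A)$ as $R_w$-modules. Now, because $J$ is invertible in $R_w$, for any fractional $R_w$-ideal $I \subseteq L$ the natural map
\[
J \otimes_{R_w} I \longrightarrow J \cdot I \subseteq L, \qquad j \otimes x \mapsto jx,
\]
is an isomorphism of $R_w$-modules (invertibility ensures it is injective and, by definition of the ideal product, surjective). Applying this to $I = \cG(A)$ gives $J \otimes_{R_w} \cG(A) \cong J \cdot \cG(A)$ inside $L$.

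Combining the two displayed isomorphisms yields $\cG'(A) \cong J\cdot \cG(A)$ as fractional $R_w$-ideals, which is the claim. The only subtle point, and the one I would pay the most attention to, is that the embedding of $\cF_w(A)$ into $L$ used to define $\cG$ is only unique up to $L^\ast$, so the identification of $\cG'(A)$ with $J\cdot\cG(A)$ is really an identification of isomorphism classes in $\cI(R_w)$; this however is exactly the sense in which the statement is made, since $\cG$ takes values in the category of fractional ideals up to $R_w$-linear isomorphism.
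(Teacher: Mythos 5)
The paper does not supply a proof of Lemma~\ref{lem:cGEnd}; it is stated as an immediate consequence of Lemma~\ref{lem:cFmod} and the construction of $\cG$. Your argument is correct and is clearly the intended one: you invoke Lemma~\ref{lem:cFmod} to get $\cF'_w(A)\simeq J\otimes_{R_w}\cF_w(A)$, and then identify the tensor product with the ideal product $J\cdot\cG(A)$ inside $L$. The key step — that $J\otimes_{R_w}I\to J\cdot I$, $j\otimes x\mapsto jx$, is an isomorphism when $J$ is invertible — is justified correctly: invertible fractional ideals are locally free of rank one, hence flat, so tensoring the inclusion $I\hookrightarrow L$ by $J$ stays injective and lands in $J\otimes_{R_w}L\cong L$ with image $J\cdot I$. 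You also correctly flag the one genuine subtlety, namely that the embedding $\cF_w(A)\hookrightarrow L$ defining $\cG(A)$ is only canonical up to $L^{*}$-scaling, so the equality $\cG'(A)=J\cdot\cG(A)$ holds after choosing the embeddings for $\cG$ and $\cG'$ compatibly (or else only up to $R_w$-linear isomorphism in $\cI(R_w)$, which is the level at which the lemma is used, e.g.\ in Remark~\ref{rmk:Ghom}). No gaps.
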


\begin{remark}\label{rmk:Ghom}
Let $A, A' \in \AV_w(p)$ with CM by an algebra $L$. 
Applying the functor $\mathcal{G}$, we obtain fractional ideals $\cG(A)$ and $\cG(A')$, which are respective embeddings into $L$ of $\Hom(A,A_w)$ and $\Hom(A',A_w)$. 
Moreover, we obtain
\begin{equation}\label{eq:Ghom}
\cG(\mathrm{Hom}(A,A')) = (\cG(A):\cG(A')) 
\end{equation}
as an embedding of $\Hom(\Hom(A',A_w), \Hom(A,A_w))$ into $L$. 

Note that by Lemma~\ref{lem:cGEnd}, Equation~\eqref{eq:Ghom}, and hence the ideal $(\cG(A):\cG(A'))$, is independent of the choice of functor $\cG$; a different choice of functor $\cG'$ would yield $\cG'(A) = J \cG(A)$ and $\cG'(A') = J \cG(A')$ for some invertible ideal $J$ with $[J] \in \Pic(R_w)$, and hence 
\[
\cG'(\Hom(A,A')) = (\cG'(A) : \cG'(A')) = (J\cG(A) : J\cG(A')) = (\cG(A):\cG(A')).
\]
Since $\cG$ is an equivalence, there is a bijection $\Hom(A,A') \leftrightarrow (\cG(A):\cG(A'))$.
\end{remark}

\begin{remark}\label{rem:Endideal}
As a special case of Remark~\ref{rmk:Ghom}, we see that
\[
\cG(\mathrm{End}(A)) = (\cG(A):\cG(A')) \hspace{1cm} \text{ and } \hspace{1cm} \cG(\mathrm{Aut}(A)) = (\cG(A):\cG(A'))^{*},
\]
cf.~\cite[Corollary 4.6]{MarAbVar18}. 
\end{remark}

The following proposition describes the isomorphism class of the image of the dual variety under~$\cG$. 
Recall from Definition~\ref{def:It} that for a fractional ideal $I$ we denote its trace dual by $I^t$ and its image under the CM-involution of $L$ by $\bar{I}$.

\begin{prop}\label{prop:CSdual}
There exists an invertible ideal $H$ with $[H] \in \mathrm{Pic}(R_w)$ such that 
\begin{equation}\label{eq:globaliso}
\mathcal{G}(A^{\vee}) \simeq H \ \overline{\mathcal{G}(A)}^t  
\end{equation}
for any $A \in \AV_w(p)$.
\end{prop}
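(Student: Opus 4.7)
The plan is to compare two covariant functors $F_1, F_2 \colon \AV_w(p) \to \cI(R_w)$ defined by $F_1(A) := \cG(A^\vee)$ and $F_2(A) := \overline{\cG(A)}^t$, and to exhibit a single invertible $R_w$-ideal $H$ such that $F_1(A) \simeq H \cdot F_2(A)$ for every $A$. First I would verify that both functors are well-defined covariant equivalences. Duality $(-)^\vee$ is a contravariant auto-equivalence of $\AV_w(p)$ because the set $w$ of Weil numbers is closed under the CM-involution $x \mapsto p/x$, and composing with the contravariant equivalence $\cG$ of Theorem~\ref{thm:Mar} yields $F_1$. Similarly, $R_w = \Z[\pi, p/\pi]$ is closed under complex conjugation, so $I \mapsto \overline I$ is a covariant auto-equivalence of $\cI(R_w)$ and $I \mapsto I^t$ is a contravariant involution; composing these with $\cG$ gives the covariant equivalence $F_2$.

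The key computation would use the classical duality isomorphism $\Hom(A^\vee, A_w) \simeq \Hom(A_w^\vee, A)$, interpreted as an isomorphism of $R_w$-modules up to a twist by the CM-involution (coming from the fact that $f \mapsto f^\vee$ restricts to complex conjugation on $R_w \cong \End(A_w) \cong \End(A_w^\vee)$, sending $\pi$ to $p/\pi$). Applying $\cG$ to the right-hand side and invoking Remark~\ref{rmk:Ghom}, I obtain
\[
\cG(A^\vee) \simeq \overline{(\cG(A_w^\vee) : \cG(A))} = (H_0 : \overline{\cG(A)}),
\]
where $H_0 := \overline{\cG(A_w^\vee)}$ is an invertible $R_w$-ideal (since $A_w^\vee$ has minimal endomorphism ring $R_w$, by Lemma~\ref{lem:Awchoice} applied to the dual). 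To finish, one relates $(H_0 : \overline{\cG(A)})$ to $\overline{\cG(A)}^t = (R_w^t : \overline{\cG(A)})$: these two colon-ideals should differ by multiplication by a single invertible ideal $H$ independent of $A$, playing the role of the ``ratio'' between $H_0$ and the dualizing ideal $R_w^t$.

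The hardest step is the final one: proving $(H_0 : J) \simeq H \cdot J^t$ uniformly in $J = \overline{\cG(A)}$ without assuming $R_w$ is Gorenstein, so that $R_w^t$ may not be invertible and the symbolic ``ratio'' $H_0 (R_w^t)^{-1}$ need not literally exist as a fractional ideal. I expect to tackle this using the fact (from the proof sketch of Theorem~\ref{thm:CS}) that the image of $\cG$ consists of reflexive $R_w$-modules, for which duality is well-behaved: one can reduce the comparison to a local statement at each prime of $R_w$ (where the local Picard group vanishes and the Gorenstein condition holds generically), and then globalize to produce the invertible ideal $H$. A secondary subtlety, already signalled above, is to track carefully the CM-twist produced by the duality isomorphism $\phi \mapsto \phi^\vee$, which is what introduces the conjugation $\overline{(-)}$ in the final formula.
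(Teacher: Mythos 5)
Your overall strategy is genuinely different from the paper's: instead of computing $\cF_w(A^\vee)\otimes\Z_\ell$ and $\cF_w(A^\vee)\otimes\Z_p$ prime by prime via Tate and Dieudonn\'e modules (which is what the paper does), you argue globally using the duality bijection $\Hom(A^\vee,A_w)\xrightarrow{\sim}\Hom(A_w^\vee,A)$, $f\mapsto f^\vee$, and observe that it is conjugate-$R_w$-linear because dualizing intertwines the Frobenius actions up to the CM-involution $\pi\mapsto p/\pi$. Once made precise, this yields $\cG(A^\vee)\simeq\overline{(\cG(A_w^\vee):\cG(A))}$ with $H_0:=\overline{\cG(A_w^\vee)}$, which is a slicker route to the statement. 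You are also right to flag the CM-twist as the place where bookkeeping is delicate: one has to check that the $L$-embeddings defining $\cG(A^\vee)$ and the colon ideal $(\cG(A_w^\vee):\cG(A))$ really do correspond, up to conjugation, under this duality bijection, which is the content of Remark~\ref{rem:Llinearity} and Remark~\ref{rmk:Ghom}.

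The one genuine error in your plan is the worry in your last paragraph. You propose to prove $(H_0:J)\simeq H\,J^t$ ``without assuming $R_w$ is Gorenstein,'' anticipating a reduction to local statements where Gorenstein ``holds generically.'' But $R_w$ \emph{is} Gorenstein for every squarefree Weil polynomial: this is a theorem of Centeleghe and Stix (cited in the paper as \cite[Theorem 11]{CentelegheStix15}), and the paper's own proof of Proposition~\ref{prop:CSdual} invokes it explicitly (``Recall that $R_w$ is a Gorenstein order. Hence $R_w^t$ is an invertible fractional $R_w$-ideal''). Once you use this, your ``hardest step'' becomes a one-line calculation: since both $H_0$ and $R_w^t$ are invertible, for \emph{any} fractional $R_w$-ideal $J$ one has
$(H_0:J)=H_0\,(R_w:J)$ and $J^t=(R_w^t:J)=R_w^t\,(R_w:J)$,
so $(H_0:J)=H_0\,(R_w^t)^{-1}\,J^t$ and you may take $H:=H_0\,(R_w^t)^{-1}$, independently of $J$. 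No local-to-global reduction over reflexive modules is needed. Note also that invertibility of $H_0=\overline{\cG(A_w^\vee)}$ itself uses Gorensteinness (a fractional $R$-ideal $I$ with $(I:I)=R$ is invertible precisely when $R$ is Gorenstein), or alternatively the $\Pic(R_w)$-torsor description of Lemma~\ref{lem:Awchoice}; so you cannot avoid this hypothesis even if you wanted to. In short: the route is sound and arguably cleaner than the paper's, but the final step you identify as hard is actually trivial once you accept the always-available Gorenstein input, and the real work lies in carefully justifying the conjugate-linearity assertion that you (correctly) flag as subtle.
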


\begin{proof}
Fix an abelian variety $A_w$ with minimal endomorphism ring $R_w$. We will repeatedly use that $R_w = \overline{R_w}$ and $R_w \simeq R_w^t$, cf.~\cite[Proof of Proposition 9.4]{Howe95}. 

By \cite[Proposition 21]{CentelegheStix15}, the $\ell$-adic Tate module of $A_w$ satisfies
\begin{equation}\label{eq:TlAwI}
T_{\ell}(A_w) \simeq R_w \otimes \mathbb{Z}_{\ell}
\end{equation}
for all $\ell \neq p$, while at $\ell = p$ its Dieudonn{\'e} module satisfies
\begin{equation}\label{eq:TpAwI}
T_{p}(A_w) \simeq R_w^t \otimes \mathbb{Z}_{p} \simeq R_w \otimes \mathbb{Z}_p.
\end{equation}
For the remainder of the proof, we fix the isomorphisms of~\eqref{eq:TlAwI} and~\eqref{eq:TpAwI}.

First consider $\ell \neq p$. Let $M := \cF_w(A)$.
Combining \eqref{eq:TlAwI} with \cite[Proposition 27]{CentelegheStix15}, for any $A \in \AV_w(p)$ we have an identification
\begin{equation}\label{eq:TlA}
T_{\ell}(A) \simeq \Hom_{R_w \otimes \mathbb{Z}_{\ell}}(M \otimes \mathbb{Z}_{\ell}, T_{\ell}(A_w)).
\end{equation}
Furthermore, for any prime $\ell \neq p$ we have
\begin{equation}\label{eq:LHS}
\begin{split}
\cF_w(A^{\vee}) \otimes \Z_{\ell} & = \Hom_{\F_p}(A^{\vee},A_w) \otimes \Z_{\ell} \simeq \Hom_{\Z_{\ell}[\mathrm{Gal}_{\F_p}]}(T_{\ell}(A^{\vee}), T_{\ell}(A_w) ) \\
& \simeq \Hom_{R_w \otimes \Z_{\ell}}( T_{\ell}(A^{\vee}), T_{\ell}(A_w) ) \\
& \simeq \Hom_{R_w \otimes \Z_{\ell}}( \overline{\Hom_{\Z_{\ell}}(T_{\ell}(A), \Z_{\ell})}, T_{\ell}(A_w) ) \\
& \simeq \Hom_{R_w \otimes \Z_{\ell}}( \overline{\Hom_{\Z_{\ell}}(\Hom_{R_w \otimes \mathbb{Z}_{\ell}}(M \otimes \mathbb{Z}_{\ell}, T_{\ell}(A_w)), \Z_{\ell})}, T_{\ell}(A_w) ) \\
& \simeq \Hom_{R_w \otimes \mathbb{Z}_{\ell}}( \overline{\Hom_{\Z_{\ell}}(\Hom_{R_w \otimes \mathbb{Z}_{\ell}}(M \otimes \mathbb{Z}_{\ell}, R_w \otimes \mathbb{Z}_{\ell}), \Z_{\ell})}, R_w \otimes \mathbb{Z}_{\ell} ) \\
& \simeq \Hom_{R_w}( \overline{\Hom_{\Z}(\Hom_{R_w}(M, R_w), \Z)}, R_w) \otimes \mathbb{Z}_{\ell};
\end{split}
\end{equation}
the first line is Tate's theorem \cite{Tate66}, the second is \cite[Remark 20]{CentelegheStix15}, the third is an identification $T_{\ell}(A^{\vee}) \simeq T_{\ell}(A)^t(1) \simeq T_{\ell}(A)^t$ of $R_w$-modules induced from a non-canonical isomorphism $\mathbb{Z}_{\ell}(1) \simeq \mathbb{Z}_{\ell}$ of group 
schemes (which does not affect the module structure), the fourth is \eqref{eq:TlA}, and the fifth is \eqref{eq:TlAwI}.

We remark that for any ideals $I$ and $J$ in $L$, with $J^t$ invertible, we have $(J:I)=(I^t:J^t)=(J^t)^{-1}I^t$.
Moreover, applying the CM-involution of $L$ commutes with taking inverses and trace duals.
Applying these rules, we obtain
\begin{equation}\label{eq:LHSglobal}
\begin{split}
 (J^t)^{-1} \overline{(J^t)^{-1}} \overline{I}^t & =
 (\overline{(J^t)^{-1}} \overline{I}^t : J^t) =
 ((\overline{I}^t : \overline{J}^t) : J^t) = \\
 & = ((\overline{J} : \overline{I}) : J^t) = 
 (J : \overline{(J:I)}^t) =
 \\
& = \Hom_{R_w}( \overline{\Hom_{\Z}(\Hom_{R_w}(I, J), \Z)}, J ).
\end{split}
\end{equation}
Recall that $R_w$ is a Gorenstein order. Hence $R_w^t$ is an invertible fractional $R_w$-ideal, i.e., $R_w^t$ is locally principal.
By choosing embeddings into $L$ such that \eqref{eq:LHS} becomes an equality, 
we obtain from \eqref{eq:LHSglobal} (with $I = \mathcal{G}(A)$ and $J = R_w$) that \begin{equation}\label{eq:LHSlocal}
\mathcal{G}(A^{\vee}) \otimes \mathbb{Z}_{\ell} \simeq \overline{\mathcal{G}(A)}^t \otimes \mathbb{Z}_{\ell}
\end{equation}
for any $\ell \neq p$. 
We stress that the local isomorphism in \eqref{eq:LHSlocal} is 
multiplication by $(x_0\bar{x_0})^{-1}$, where $x_0$ is the local generator at $\ell$ of $R_w^t$; in particular, it does not depend on the choice of $A \in \AV_w(p)$.

Now we consider the prime $p$. Combining \eqref{eq:TpAwI} with \cite[Proposition 28]{CentelegheStix15} yields an identification
\begin{equation*}
T_p(A) \simeq (\cF_w(A) \otimes \mathbb{Z}_p) \otimes_{(R_w \otimes \mathbb{Z}_p)} T_p(A_w) \simeq (M \otimes_{R_w} R_w^t) \otimes \Z_p.
\end{equation*}
Instead of \eqref{eq:LHS} we find by analogous arguments (and by contravariance of the Dieudonn{\'e} functor) that
\begin{equation}\label{eq:LHSp}
\begin{split}
\cF_w(A^{\vee}) \otimes \Z_{p} & = \Hom_{\F_p}(A^{\vee},A_w) \otimes \Z_{p} \simeq \Hom_{\mathcal{D}_{\F_p}}(T_{p}(A_w), T_{p}(A^{\vee}) ) \\
& \simeq \Hom_{R_w \otimes \Z_{p}}( T_{p}(A_w), T_{p}(A^{\vee}) ) \\
& \simeq \Hom_{R_w \otimes \Z_{p}}( T_p(A_w), \overline{\Hom_{\Z_p}(T_p(A), \Z_{p})} ) \\
& \simeq \Hom_{R_w \otimes \Z_{p}}(T_p(A_w), \overline{\Hom_{\Z_{p}}((M \otimes_{R_w} R_w^t) \otimes \Z_p, \Z_{p})}) \\
& \simeq \Hom_{R_w \otimes \Z_{p}}(R_w^t \otimes \Z_p, \overline{\Hom_{\Z_{p}}((M \otimes_{R_w} R_w) \otimes \Z_p, \Z_{p})}) \\
& \simeq \Hom_{R_w}(R_w^t, \overline{\Hom_{\Z}((M \otimes_{R_w} R_w^t), \Z)}) \otimes \Z_p;
\end{split}
\end{equation}
the first line is due to Tate, cf.~\cite[Section 1.2]{Wat69}, the second is \cite[Remark 20]{CentelegheStix15} again, the third is an identification $T_{p}(A^{\vee}) \simeq T_{p}(A)^t(1) \simeq T_{p}(A)^t$ induced from an isomorphism $\mathbb{Z}_p(1) \simeq \mathbb{Z}_p$, the fourth follows from \cite[Proposition 28]{CentelegheStix15}, and the fifth is \eqref{eq:TpAwI}.

We also remark that for any ideals $I$ and $J$ in $L$, with $J^t$ invertible, we have that
$(J^t I)^t = (J:I) = (I^t:J^t) = (J^t)^{-1} I^t$. 
Using these rules, again together with the fact that the CM-involution commutes with inverses and trace duals, we obtain, cf.~\eqref{eq:LHSglobal},
\begin{equation}\label{eq:LHSglobalp}
\begin{split}
(J^t)^{-1} \overline{(J^t)^{-1}} \overline{I}^t & = 
(J^t)^{-1}( \overline{I}^t : \overline{J}^t)=
(( \overline{I}^t : \overline{J}^t) : J^t) = \\
& = ( (\overline{I J^t})^t : J^t) = \Hom_{R_w}(J^t, \overline{ \Hom_{\Z}( I \otimes_{R_w} J^t, \Z)} ).
\end{split}
\end{equation}
So embedding $\cF_w(A)=M$ into $L$ as above, we obtain from \eqref{eq:LHSp} and \eqref{eq:LHSglobalp} (with $I = \mathcal{G}(A)$ and $J = R_w$) that
\begin{equation}\label{eq:LHSlocalp}
\mathcal{G}(A^{\vee}) \otimes \mathbb{Z}_{p} \simeq \overline{\mathcal{G}(A)}^t \otimes \mathbb{Z}_{p}.
\end{equation}
As in Equation~\eqref{eq:LHSlocal}, the local isomorphism in ~\eqref{eq:LHSlocalp} is the composition of the isomorphisms from \eqref{eq:LHSp} with multiplication by $(x_0\bar{x_0})^{-1}$, where $x_0$ is the local generator at $p$ of the invertible $R_w$-ideal $R_w^t$. Thus, again it does not depend on the choice of $A \in \AV_w(p)$.

Equations~\eqref{eq:LHSlocal} and~\eqref{eq:LHSlocalp} are equivalent to the statement that $\mathcal{G}(A^{\vee})$ and $\overline{\mathcal{G}(A)}^t$ 
are locally isomorphic at every prime ideal $\p$ of $\End(A^\vee)$, see \cite[Section 5]{LevyWiegand85}.
This is in turn equivalent to the existence of an ideal $H'$ which is invertible in $\End(A^{\vee}) = (\overline{\mathcal{G}(A)}:\overline{\mathcal{G}(A)})$ such that 
\[
\mathcal{G}(A^{\vee}) = H'\ \overline{\mathcal{G}(A)}^t,
\]
see \cite{dadetz62} for the case when $\End(A^\vee)$ is an integral domain and \cite[Proposition 4.1]{MarICM18} for a general order in an \'etale algebra.
Replacing $H'$ with an isomorphic ideal $H''$ which is coprime to $(R_w: \mathcal{O}_L)$ if necessary, cf.~\cite[Corollary 1.2.11]{cohenadv00}, we set $H := H'' \cap R_w$; then $H$ is invertible in $R_w$ by construction  and satisfies \eqref{eq:globaliso}, as required. As we observed throughout the proof, the isomorphism class of~$H$ does not depend on the variety $A \in \AV_w(p)$.
\end{proof}

\subsection{Compatibility under reduction}\label{ssec:compred}

The functor $\cG$ identifies abelian varieties over finite fields with fractional ideals, while $\mathcal{H}$ makes such an identification for abelian varieties over $p$-adic fields. The reduction map provides a relation between abelian varieties (and homomorphisms between them) in characteristic zero and those in characteristic~$p$.  
In this subsection, we prove in Propositions~\ref{prop:redhomS} under some conditions how the corresponding fractional ideals are related. 

\begin{notn}\label{not3}
Let $K$ be a $p$-adic field with residue field $k$. For an abelian variety $A$ over $K$ with good reduction, we denote its reduction by $A_k = A \otimes k$. 
\end{notn}

If $A,A'$ are abelian varieties over $K$ with good reduction and with the same endomorphism ring, whose respective reductions $A_k, A'_k$ are elements of $\AV_h(p)$ for a polynomial~$h$ satisfying $(\dagger)$ (cf.~Notation~\ref{not2}), then we recall from the previous subsections (cf.~\eqref{eq:Cunifhom2} and~\eqref{eq:Hhom}, and Remark~\ref{rmk:Ghom}) that there are bijections 
\[
\Hom_L(A,A') \leftrightarrow \mathcal{H}(\Hom_L(A,A')) \quad \text{ and } \quad \Hom_L(A_k, A'_k) \leftrightarrow \cG(\Hom_L(A_k,A_k')).
\]
A further bijection is given by the following lemma.

\begin{lemma}\label{lem:HomAA'red}
Let $A, A'$ be abelian varieties over $K$ with good reduction, and CM by an algebra~$L$ through a CM-type $\Phi$.
Denote their respective reductions by $A_k$ and $A'_k$.
Suppose that $\End(A) \simeq \End(A_k)$ and that $\End(A') \simeq \End(A'_k)$.
Define $I = \mathcal{H}(A)$ and $I' = \mathcal{H}(A')$ and suppose that $(I:I')$ is invertible and that $(I':I)I=I'$. 
Then $\End(A) \simeq \End(A')$ and the $L$-linear reduction map 
\begin{equation*}
\Hom_L(A,A')  \hookrightarrow \Hom_L(A_k,A'_k)
\end{equation*}
is a bijection. Furthermore, both sets are in bijection with $(I':I)$.
\end{lemma}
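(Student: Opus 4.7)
The plan is to separate the statement into three parts: (i) the equality $\End(A) \simeq \End(A')$, (ii) the bijection on the characteristic-zero side $\Hom_L(A,A') \leftrightarrow (I':I)$, and (iii) the upgrade of the (standard) injective reduction map to a bijection via a local analysis on Tate and Dieudonn\'e modules.

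For parts (i) and (ii), set $\mathfrak{a} := (I:I')$ and $S := (I:I)$. The assumption that $\mathfrak{a}$ is invertible, together with $(I':I)I = I'$, amounts to $(S:\mathfrak{a})I = I'$ (since the invertibility formally forces $(S:\mathfrak{a}) = \mathfrak{a}^{-1} = (I':I)$), so Equation~\eqref{eq:samemultring} yields $(I':I') = (I:I) = S$. Complex uniformization and Lemmas~\ref{lem:KtoKbar} and~\ref{lem:KbartoC} then transport this to $\End(A) \simeq \End(A') \simeq S$, which the canonical-lifting hypotheses further identify with $\End(A_k)$ and $\End(A'_k)$. Under the same hypotheses, Equation~\eqref{eq:Cunifhom2} supplies an $S$-linear bijection $\Hom_L(A,A') \leftrightarrow (I':I)$, explicitly given by fixing an $\mathfrak{a}$-multiplication $\lambda: A \to A'$ and sending $\lambda \circ i(y) \mapsto y$ for $y \in (I':I)$.

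For part (iii), injectivity of the reduction map $\Hom_L(A,A') \to \Hom_L(A_k,A'_k)$ follows from the fully faithfulness of the generic-fiber functor for abelian schemes over a normal base, already cited in the text preceding Theorem~\ref{thm:CCOlift}. For surjectivity, I would fix the $L$-linear isogeny $\lambda$ from above, write every $L$-linear $f: A_k \to A'_k$ as $\lambda_k \circ s$ with $s \in \End^0_L(A_k) = L$, and verify $s \in (I':I)$ locally at every rational prime $\ell$. At $\ell \neq p$, good reduction gives an isomorphism of $S \otimes \mathbb{Z}_\ell$-modules $T_\ell(A) \simeq T_\ell(A_k)$ and similarly for $A'$; complex uniformization identifies these with $I \otimes \mathbb{Z}_\ell$ and $I' \otimes \mathbb{Z}_\ell$, so $L$-linearity of $f$ forces $s \in (I':I) \otimes \mathbb{Z}_\ell$. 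At $\ell = p$, the canonical-lifting hypotheses together with Serre--Tate deformation theory let one match the $L$-compatible structure on $A_k[p^\infty]$ (respectively $A'_k[p^\infty]$) with $I$ (respectively $I'$), giving the same conclusion at $p$.

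The main obstacle is this $p$-adic local step: extracting, from only the canonical-lifting hypothesis, a description of the $L$-module structure on the Dieudonn\'e module of $A_k$ that is explicitly compatible with the fractional ideal $I = \mathcal{H}(A)$, without circularly invoking any comparison with a characteristic-$p$ categorical equivalence.
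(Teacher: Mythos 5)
Your treatment of parts (i) and (ii) agrees with the paper: both deduce $\End(A)\simeq\End(A')$ from the invertibility of $\mathfrak{a}=(I:I')$ via Equation~\eqref{eq:samemultring}, and both realize $\Hom_L(A,A')\leftrightarrow(I':I)$ by choosing an $\mathfrak{a}$-multiplication $\lambda$. Where you diverge is part (iii). You attempt to prove surjectivity of the reduction map by writing $f=\lambda_k\circ s$ and checking $s\in(I':I)$ one prime at a time, using Tate modules at $\ell\neq p$ and Dieudonn\'e modules at $\ell=p$. The paper instead uses a single, prime-uniform observation: by a mild generalization of \cite[Chapter 3, Proof of Lemma 3.1, p.61]{lang}, the reduction $\lambda_k$ of the $\mathfrak{a}$-multiplication $\lambda$ is \emph{again} an $\mathfrak{a}$-multiplication. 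Since $\End(A_k)\simeq\End(A)=S$ and $\End(A'_k)\simeq\End(A')=S$, the same derivation that yields $\Hom_L(A,A')=\lambda\circ i((I':I))$ in characteristic zero yields $\Hom_L(A_k,A'_k)=\lambda_k\circ i_k((I':I))$ over $k$, and the reduction map sends $\lambda\circ i(y)$ to $\lambda_k\circ i_k(y)$, so it is visibly a bijection; no localization is needed.

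The obstacle you flag at the end is a genuine gap, not just a caveat. There is no short argument from the canonical-lifting hypothesis alone that identifies the Dieudonn\'e module $T_p(A_k)$ $S\otimes\mathbb{Z}_p$-linearly with $I\otimes\mathbb{Z}_p$. In fact the $p$-adic picture is twisted: in Proposition~\ref{prop:CSdual} the Dieudonn\'e module of the base point $A_w$ appears as $R_w^t\otimes\mathbb{Z}_p$ rather than $R_w\otimes\mathbb{Z}_p$, and relating the $p$-adic module structure to $\mathcal{H}(A)=I$ is exactly the kind of comparison that the paper only controls later via the element $\alpha$ of Proposition~\ref{prop:redhomS}; invoking that machinery here would be circular. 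To close your $p$-adic case you would essentially need the statement that $\lambda_k$ is an $\mathfrak{a}$-multiplication — but once you have that, the local analysis is superfluous, since the bijection $\Hom_L(A_k,A'_k)\leftrightarrow(I':I)$ then follows globally exactly as in characteristic zero.
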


\begin{proof}
Fix an embedding $i: L \hookrightarrow \End^0(A)$. Using ideal multiplications and transforms, in Subsection~\ref{ssec:unif} we 
constructed an isomorphism $\End(A) \simeq \End(A')$ (cf.~Equation \eqref{eq:samemultring})
and obtained
a bijection
\begin{equation}\label{eq:HomAA'red1}
\Hom_L(A,A') = \lambda \circ i\left((I': I)\right) \leftrightarrow (I': I),
\end{equation}
where $\lambda$ is an $\mathfrak{a}$-multiplication for $\mathfrak{a}=(I: I')$.
It follows from e.g.~\cite[Chapter 3, Proof of Lemma 3.1, p.61]{lang}, which immediately generalizes from the case of $S$ being maximal, that the reduction of an $\mathfrak{a}$-multiplication is again an $\mathfrak{a}$-multiplication. Denote the reduction of~$\lambda$ by $\lambda_{k}$ and fix an embedding $i_k: L \hookrightarrow \End^0(A_k)$. Since $\End(A) \simeq \End(A_k)$, we obtain a similar bijection
\begin{equation}\label{eq:HomAA'red2}
\Hom_L(A_k,A_k') = \lambda_{k} \circ i_k\left((I': I)\right) \leftrightarrow (I':I).
\end{equation}
Equations~\eqref{eq:HomAA'red1} and~\eqref{eq:HomAA'red2} show that the injective reduction map 
$\Hom_L(A,A') \hookrightarrow \Hom_L(A_k,A'_k)$
is a bijection as well, as required.
\end{proof}

Since our goal is to compute polarizations, from now on we consider the special case where $A' = A^{\vee}$. Then if $A$ reduces to $A_k$ over $\F_p$, the previous subsections yield bijections 
\begin{equation*}
\Hom_L(A,A^{\vee}) \leftrightarrow (\mathcal{H}(A^{\vee}): \mathcal{H}(A)) \quad \text{ and } \quad \Hom_L(A_k, A_k^{\vee}) \leftrightarrow (\cG(A_k) : \cG(A_k^{\vee})),
\end{equation*}
and if the hypotheses of Lemma~\ref{lem:HomAA'red} are satisfied, there is a bijection between these two colon ideals.

To characterize polarizations we do not just need bijections but $L$-linear isomorphisms. To obtain these, we determine the $L$-linear structures on the objects we are considering.

\begin{remark}\label{rem:Llinearity} \
\begin{enumerate}
    \item 
An inclusion $i:L\into \End^0(A)$ induces an inclusion $i_k:L\into \End^0(A_k)$ such that the following diagram commutes:
\begin{equation}\label{eq:LEnd0}
\begin{tikzcd}[swap,bend angle=45]
L \rar{i} \dar{=} & \End^0(A) \dar{} \\ 
L \rar{i_k} & \End^0(A_k) \\ 
\end{tikzcd}
\end{equation}
Now let $A/K$ be an abelian variety with complex multiplication by a CM-algebra~$L$.
Suppose that we $\End^0(A_k)$ is a free module of rank one over $L$, which happens in particular when $A_k$ is squarefree. Then we can identify $L = \End^0(A_k)$, and it follows from diagram~\eqref{eq:LEnd0} that we can also identify
\[
\End^0(A) = \End^0(A_k).
\]

\item Similarly, an inclusion $i:L\into \End^0(A)$ induces an $L$-linear structure $i^\vee:L\into \End^0(A^\vee)$ on the dual variety satisfying
\[
i^{-1}(\End(A)) = \overline{(i^\vee)^{-1}(\End(A^\vee))}.
\]
\item It follows from the previous two remarks that if $A$ is squarefree, then there are compatible identifications $i: L \simeq \End^0(A)$, $i_k: L \simeq \End^0(A_k), i^{\vee}: L \simeq \End^0(A^{\vee})$, and $i^{\vee}_k: L \simeq \End^0(A^{\vee}_k)$. Compatibility may be expressed by identifying the respective Frobenius endomorphisms. Through these identifications, we may also identify the respective endomorphisms rings with orders in $L$. 
\item Now consider $\Hom^0(A,A^{\vee})$; it carries both a left $L$-module structure (induced from the isomorphism $i$) and a right $L$-module structure (induced from $i^{\vee}$), which are compatible by our assumptions. The analogous statements for $\Hom^0(A_k, A^{\vee}_k)$ also hold.
For any isogenies $f:A\to A^\vee$ and $g:A_k\to A^\vee_k$, we obtain $L$-linear isomorphisms
\begin{align*}
    \Psi_f:L & \xrightarrow{\sim} \Hom_L^0(A,A^\vee), & \text{ and }  && \Psi_{g}:L & \xrightarrow{\sim} \Hom_L^0(A_k,A_k^\vee), \\
    l & \mapsto f \circ i(l) = i^\vee(l) \circ f & && l & \mapsto g \circ i_k(l) = i_k^\vee(l) \circ g.
\end{align*}
Through these isomorphisms, we may identify both 
\begin{equation*}
   \Psi_f^{-1}(\Hom_L(A,A^\vee)) \quad \text{ and } \quad \Psi_{g}^{-1}(\Hom_L(A_k,A_k^\vee)) 
\end{equation*}
with fractional ideals in $L$. Equations~\eqref{eq:HomAA'red1} and~\eqref{eq:HomAA'red2} suggest that $f = \lambda$ and $g = \lambda_{k}$ are natural choices.
\item 
Let $A_0$ and $B_0$ be squarefree abelian varieties over a finite field $k$ with CM by an algebra~$L$ so that $\End^0(A_0) = L$ and $\End^0(B_0) = L$. Suppose that there exists an isogeny $f: A_0 \to B_0$. 
Since $f \circ \pi_{A_0} \circ f^{-1} = \pi_{B_0}$, 
the $L$-linear structures of $\Hom^0(A_0, A_0^{\vee}) \simeq L$ and $\Hom^0(B_0, B_0^{\vee}) \simeq L$ may be identified as well, via the map
\begin{align*}
    \Hom^0(B_0, B_0^{\vee}) & \to \Hom^0(A_0, A_0^{\vee}) \\
\phi & \mapsto f^{\vee} \circ \phi \circ f. \nonumber
\end{align*}
\item Consider the notation $\mathcal{H}$ defined in Subsection~\ref{ssec:unif}, with $\mathcal{H}(A)=I$ and $\mathcal{H}(A^{\vee}) = \overline{I}^t$. 
Equation~\eqref{eq:Cunifhom} implies that if we take the basis element $f$ in~4. to be the $\mathfrak{a}$-multiplication $\lambda: A \to A^{\vee}$, where $\mathfrak{a}=(I:\bar{I}^t)$, and where we assume that $\mathfrak{a}$ is invertible and that $\mathfrak{a}^{-1}I=\overline{I}^t$, then $\Psi_{\lambda}$ and $\mathcal{H}$ are inverse to each other.
In particular
\[
\Psi_{\lambda}^{-1}(\Hom_L(A,A^{\vee})) = (\overline{I}^t:I) 
\]
is an equality of fractional ideals in $L$.
\end{enumerate}
\end{remark}

Recall the definition of $R_w$ from Notation~\ref{not2}.
The following proposition is a strengthening of Lemma~\ref{lem:HomAA'red} (in the case that $A' = A^{\vee}$) and Remark~\ref{rem:Llinearity}.4, using the notation of Remark~\ref{rem:Llinearity}.

\begin{prop}\label{prop:redhomS}
Let $A/K$ be an abelian variety with good reduction and with CM by an algebra $L$, and let $A_k$ denote its reduction.
Then $i^{-1}(\End(A)) = S$ is an order in~$L$; suppose that~$S$ is Gorenstein and satisfies $S = \overline{S}$.
Suppose that $i_k^{-1}(\mathrm{End}(A_k))~=~S$, as well.
Then there exists an $\alpha \in S^*$ such that the reduction map $\Hom_L(A,A^{\vee}) \to \Hom_L(A_k,A_k^{\vee})$ is multiplication by $\alpha \in S^*$.
\end{prop}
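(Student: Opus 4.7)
My plan is to express both sides of the reduction map as fractional $S$-ideals in $L$, using the $\Psi$-identifications of Remark~\ref{rem:Llinearity}, and then to verify that the map corresponds to scalar multiplication by a unit of $S$.

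First I would fix an isogeny $\lambda : A \to A^\vee$ and let $\lambda_k$ denote its reduction. The isomorphisms $\Psi_\lambda$ and $\Psi_{\lambda_k}$ identify $\Hom^0_L(A,A^\vee)$ and $\Hom^0_L(A_k,A_k^\vee)$ with $L$, and turn $\Hom_L(A,A^\vee)$ and $\Hom_L(A_k,A_k^\vee)$ into fractional $S$-ideals $J, J_k \subseteq L$. With these choices, after $\otimes_{\Z} \Q$ the reduction map becomes the identity on $L$, so $J \subseteq J_k$.

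The heart of the proof is to show that $J$ and $J_k$ are isomorphic as $S$-modules. I would argue this locally at every prime $\mathfrak{p}$ of $S$: at primes above a rational prime $\ell \neq p$, reduction induces an $S$-equivariant isomorphism of $\ell$-adic Tate modules of $A$ and $A_k$, yielding $J \otimes_{S} S_\mathfrak{p} \cong J_k \otimes_{S} S_\mathfrak{p}$; at primes above $p$, the corresponding identification comes from the Dieudonn\'e module, in the spirit of the proof of Proposition~\ref{prop:CSdual}. The Gorenstein hypothesis allows these local isomorphisms to be assembled into a global $S$-module isomorphism $J \cong J_k$, and $S = \overline{S}$ guarantees that the CM-involution and trace-dual operations used in these identifications preserve the category of fractional $S$-ideals.

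Having $J \cong J_k$ as $S$-modules, i.e.\ $J_k = \beta J$ for some $\beta \in L^*$, I would rescale $\lambda_k$ by replacing it with $\lambda_k \circ i_k(\beta)$ so that the new identifications give $J_k = J$ as subsets of $L$. The reduction map is then an injective $S$-linear endomorphism of the fractional ideal $J$, hence multiplication by an element $\alpha \in S$; since the map is an isomorphism after $\otimes_{\Z} \Q$, $\alpha \in S^*$, as required. The main obstacle is the local comparison at $p$, where the Tate-module argument is replaced by a more delicate Dieudonn\'e-module computation, and where the Gorenstein and $S = \overline{S}$ hypotheses are essential.
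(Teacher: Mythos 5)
You take a genuinely different route from the paper, and your route has gaps in the two places where the paper's route is doing the real work.

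The paper's proof of Proposition~\ref{prop:redhomS} uses the $\mathfrak{a}$-multiplication machinery from Subsection~\ref{ssec:unif}: the Gorenstein hypothesis on $S=\overline S$ makes $\mathfrak{a}=(I:\overline I^t)$ invertible, $\lambda$ is chosen to be an $\mathfrak{a}$-multiplication $A\to A^\vee$, and Lemma~\ref{lem:HomAA'red} (citing Lang) uses that the reduction of an $\mathfrak{a}$-multiplication is again an $\mathfrak{a}$-multiplication. With these choices, $\Psi_\lambda^{-1}$ and $\Psi_{\lambda_k}^{-1}$ identify \emph{both} $\Hom_L(A,A^\vee)$ and $\Hom_L(A_k,A_k^\vee)$ with the \emph{same} fractional ideal $(\overline I^t:I)$, the reduction map is a bijection onto its target by Equations~\eqref{eq:HomAA'red1} and~\eqref{eq:HomAA'red2}, and the induced dotted arrow is an $S$-linear automorphism of $(\overline I^t:I)$, hence multiplication by $\alpha\in((\overline I^t:I):(\overline I^t:I))^*=S^*$. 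No local-to-global argument is needed.

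Your proposal replaces this with a prime-by-prime comparison, and the gaps are concrete. At primes over $\ell\neq p$, a Tate-module isomorphism by itself does not "yield $J\otimes S_\mathfrak{p}\cong J_k\otimes S_\mathfrak{p}$"; what you actually need is that the inclusion $J\hookrightarrow J_k$ (which is already known after $\otimes\Q$) is \emph{locally an equality}, i.e., that a rational $L$-linear homomorphism $A\to A^\vee$ is integral at $\ell$ if and only if its reduction is. This is a statement about detecting kernels of isogenies through $\ell$-power torsion and its behavior under good reduction, and it needs to be spelled out; asserting an abstract local isomorphism is strictly weaker and will not propagate to the conclusion you want. At $p$ your comparison is only gestured at; invoking "the spirit of Proposition~\ref{prop:CSdual}" is misplaced, since that proposition compares duals \emph{within} $\AV_w(p)$, whereas here you must compare the $p$-divisible group of $A$ over $K$ with its reduction over $k$, which is Serre--Tate deformation theory and uses that the lift is canonical. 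This $p$-adic step is exactly what the paper's $\mathfrak{a}$-multiplication argument sidesteps entirely.

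Finally, the closing inference "since the map is an isomorphism after $\otimes_{\Z}\Q$, $\alpha\in S^*$" is invalid: \emph{any} nonzero $\alpha\in S$ induces an isomorphism after $\otimes\Q$. To get $\alpha\in S^*$ you must show the reduction map is surjective at the integral level, equivalently $J=J_k$. After your rescaling you only control $\beta^{-1}\in S$ (from $J\subseteq J_k$), not $\beta^{-1}\in S^*$; the abstract isomorphism $J\cong J_k$ together with the inclusion $J\subseteq J_k$ does not force equality (think of $2\Z\subsetneq\Z$). The surjectivity is precisely the content of Lemma~\ref{lem:HomAA'red}. Note also that if your local argument were strengthened to local \emph{equality} at every prime (which is what the Tate-module and Serre--Tate comparisons should deliver), then $J=J_k$ already and the rescaling step becomes both unnecessary and somewhat misleading.
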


\begin{proof}
Say that $\mathcal{H}(A)=I$. Then $( \overline{I}^t: \overline{I}^t ) = \overline{S}$, which by assumption is equal to $S$. Furthermore, since $S = \overline{S}$ is assumed to be Gorenstein, it follows that $(I:\overline{I}^t)$ is invertible and that $(\overline{I}^t:I)I=\overline{I}^t$, see \cite[Proposition 2.11, Proposition 4.1, and Corollary 4.5]{MarICM18}.
Remark~\ref{rem:Llinearity}.6 then implies that 
\begin{equation}\label{eq:HPsi}
(\overline{I}^t:I) = \mathcal{H}(\Hom_L(A,A^{\vee})) = \Psi_{\lambda}^{-1}(\Hom_L(A,A^{\vee})).
\end{equation}
In addition, it follows from~\eqref{eq:HomAA'red1} and~\eqref{eq:HomAA'red2} in Lemma~\ref{lem:HomAA'red} that the reduction map
\begin{equation*}
\Hom_L(A,A^{\vee}) = \lambda \circ i\left((\overline{I}^t:I)\right) \hookrightarrow \Hom_L(A_k, A_k^{\vee}) = \lambda_{k} \circ i_k\left((\overline{I}^t:I)\right)
\end{equation*}
is an $L$-linear isomorphism, where $\lambda_{k}$ denotes the reduction of $\lambda$. 
That is,
\begin{equation}\label{eq:psipsi}
\Psi_{\lambda}^{-1}(\Hom_L(A,A^{\vee})) = \Psi_{\lambda_{k}}^{-1}(\Hom_L(A_k,A_k^{\vee})) = (\overline{I}^t:I).
\end{equation}
More precisely, we obtain a commutative diagram
\[
\xymatrix{
\Hom_L(A,A^\vee) \ar[d]^{\Psi_{\lambda}^{-1}} \ar[r]^{\sim} & \Hom_L(A_k,A^\vee_k)\ar[d]^{\Psi_{\lambda_{k}}^{-1}}\\
(\overline{I}^t:I) \ar@{.>}[r]^{\sim} & (\overline{I}^t:I)
}
\]
where the top horizontal map is the reduction map and the induced dotted arrow is an isomorphism of $(\overline{I}^t:I)$ onto itself.
Since
\[
\left((\overline{I}^t : I) : (\overline{I}^t : I) \right) = 
          \left( \overline{I}^t: I(\overline{I}^t : I)\right) = ( \overline{I}^t: \overline{I}^t ) = \overline{S} = S,
\]
this isomorphism is the multiplication-by-$\alpha$ map for some $\alpha \in S^*$. 
\end{proof}

\begin{remark} \label{rem:Gorenstein} Note that in Proposition~\ref{prop:redhomS}, the assumptions that $S$ is Gorenstein and satisfies $S = \overline{S}$ are used to show that $(I:\overline{I}^t)$ is invertible and $(\overline{I}^t:I)I=\overline{I}^t$, for any $S$-ideal $I$ such that $(\overline{I}^t: \overline{I}^t ) = S $. Hence, instead of requiring $S = \overline{S}$ to be Gorenstein, we could impose the slightly weaker conditions that $(I:\overline{I}^t)$ is invertible and $(\overline{I}^t:I)I=\overline{I}^t$. 
\end{remark}

\begin{remark}
Let $A/K$ be an abelian variety with good reduction and with CM by an algebra~$L$, and let~$A_k$ denote its reduction. Suppose that $S = i^{-1}(\End(A))$ is a Gorenstein order in $L$ which satisfies $S = \overline{S}$. By \cite[Corollary 6.2]{GorenLauter}, the condition $i^{-1}(\End(A)) = i_k^{-1}(\End(A_k))$ of Proposition~\ref{prop:redhomS} holds if the cardinality $\vert \mathcal{O}_L / S \vert$ is coprime to $p$. That is, in this case we have $i_k^{-1}(\End(A_k))~=~S$, as well.
\end{remark}

Throughout the remainder of this subsection, we specialize to prime fields $\mathbb{F}_p$ and let $h$ be a squarefree polynomial satisfying $(\dagger)$ (cf.~Notation~\ref{not2}). We now study the $L$-linear structures associated with the $L$-linear functor $\cG$, given in Theorem~\ref{thm:Mar} as an embedding into $L=\Q[x]/(h)=\Q[\pi]$ obtained from the functor $\cF_w$ of Theorem~\ref{thm:CS}.

Namely, for every abelian variety $A_0$ in the isogeny class $\AV_h(p)$ there exist $\lambda_{A_0} \in \cF_w(A_0)=\Hom(A_0,A_w)$ and an embedding $i_{A_0}:L\into \End^0(A_0)$ such that
\begin{equation}\label{eq:embFw}
    \cF_w(A_0)=\Hom(A_0,A_w) = \lambda_{A_0}\circ i_{A_0}(\cG(A_0))
\end{equation}
and
\begin{equation}\label{eq:pi_comp}
    i_{A_0}(\pi)=\pi_{A_0};
\end{equation}
cf.~Remark~\ref{rmk:leftSrightRw}.
It follows from the definition of $\cF_w$ in~\eqref{eq:cFw} that for every pair of abelian varieties $A_0$ and $A'_0$ in $\AV_h(p)$ we have
\begin{equation}\label{eq:FwHom}
\cF_w(\Hom(A_0,A'_0)) = \Hom_{R_w}(\Hom(A'_0,A_w), \Hom(A_0,A_w)).
\end{equation}
Identifying these with homomorphisms $A_0 \to A'_0$ by precomposition, we may view
\[
\cF_w(\Hom(A_0,A'_0)) \hookrightarrow \Hom(A_0,A'_0).
\]
By fully faithfulness of the equivalence $\cF_w$ this inclusion is a canonical bijection of sets.
In particular, it follows from~\eqref{eq:embFw}, \eqref{eq:pi_comp}, and \eqref{eq:FwHom} that
\[
\cG(\End(A_0)) = i_{A_0}^{-1}(\End(A_0)).
\]
Now suppose that $A_0 = A_k$ is the reduction of some $A/K$ satisfying the hypotheses of Proposition \ref{prop:redhomS}, and choose $A'_0 = A^{\vee}_k$. We prove the following lemma.

\begin{lemma}\label{lem:GhomPsi}
Let $A_k \in \AV_h(p)$ be the reduction of an abelian variety $A/K$ which satisfies the hypotheses of Proposition~\ref{prop:redhomS}.
For an appropriate choice of basis elements, the natural map
\begin{equation}\label{eq:nat_map}
    \begin{split}
        \Psi_{\lambda_{k}}^{-1}\left(\Hom_L(A_k,A_k^{\vee})\right) & \longrightarrow \cG\left(\Hom_L(A_k,A_k^\vee)\right)\\
        \Psi_{\lambda_{k}}^{-1}(f) & \longmapsto \cG(f).
    \end{split}
\end{equation}
is the identity map. 
That is, $\cG(f) = \Psi_{\lambda_{k}}^{-1}(f)$ for every $f:A_k\to A_k^\vee$.
\end{lemma}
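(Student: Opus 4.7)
The plan is to unpack both expressions as elements of $L$ and exploit the freedom in the construction of $\cG$ to choose basis elements that make them agree. Since $\Hom^0_L(A_k, A_k^\vee)$ is free of rank one over $L$, both $\Psi_{\lambda_k}^{-1}$ and $\cG$ are $L$-linear isomorphisms onto fractional ideals in $L$, so they differ by multiplication by a scalar $c \in L^*$; the task is to choose the basis elements so that $c = 1$. Working in the isogeny category, where $\lambda_k$ is invertible, the definition $\Psi_{\lambda_k}(l) = \lambda_k \circ i_k(l)$ gives immediately that $\Psi_{\lambda_k}^{-1}(f) = i_k^{-1}(\lambda_k^{-1} \circ f)$. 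For $\cG(f)$, I apply the contravariant functor $\cF_w$ and use the embedding~\eqref{eq:embFw}: for any $x \in \cG(A_k^\vee)$, by $L$-linearity of $f$,
\[
(\lambda_{A_k^\vee} \circ i_{A_k^\vee}(x)) \circ f = \lambda_{A_k^\vee} \circ f \circ i_{A_k}(x),
\]
and by definition of $\cG(f)$ this must equal $\lambda_{A_k} \circ i_{A_k}(\cG(f) \cdot x)$. Cancelling $i_{A_k}(x)$ in the isogeny category yields $\lambda_{A_k^\vee} \circ f = \lambda_{A_k} \circ i_{A_k}(\cG(f))$, hence $\cG(f) = i_{A_k}^{-1}(\lambda_{A_k}^{-1} \circ \lambda_{A_k^\vee} \circ f)$.

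By~\eqref{eq:pi_comp}, the embeddings $i_k$ and $i_{A_k}$ (both sending $\pi \mapsto \pi_{A_k}$) coincide, so the two formulas agree provided $\lambda_k = \lambda_{A_k^\vee}^{-1} \circ \lambda_{A_k}$, equivalently $\lambda_{A_k} = \lambda_{A_k^\vee} \circ \lambda_k$. To arrange this, I fix any basis element $\lambda_{A_k^\vee} \in \Hom(A_k^\vee, A_w)$ and define $\lambda_{A_k} := \lambda_{A_k^\vee} \circ \lambda_k$. Since $\lambda_k$ is an isogeny, this composition is a nonzero element of $\Hom(A_k, A_w)$ and so generates $\cF_w^0(A_k)$ as a one-dimensional $L$-module; hence it is an admissible basis element in the construction of~$\cG$. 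With this choice, the two computations above coincide.

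The main subtlety is confirming that the choice $\lambda_{A_k} := \lambda_{A_k^\vee} \circ \lambda_k$ is a legitimate input to the functorial construction of $\cG$. This is free: the embedding~\eqref{eq:embFw} determines the fractional ideal $\cG(A_k)$ only up to scaling by $L^*$, since any nonzero element of $\Hom(A_k, A_w)$ serves as a valid basis element, and replacing one such choice by another multiplies $\cG(A_k)$ by an element of $L^*$. Hence the identification amounts to selecting the specific representative of the isomorphism class of $\cG(A_k)$ implicit in the phrase ``appropriate choice of basis elements'' in the statement of the lemma.
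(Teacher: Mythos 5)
Your proof is correct and takes essentially the same approach as the paper's: both hinge on the choice $\lambda_{A_k} = \lambda_{A_k^\vee}\circ\lambda_k$, after which one verifies, by unwinding $\Psi_{\lambda_k}^{-1}$ and the embedding \eqref{eq:embFw}, that the two descriptions of $\Hom_L(A_k,A_k^\vee)$ in $L$ coincide on the nose. The paper traces through the composite $\big(\bar I^t:I\big)\xrightarrow{\Psi_{\lambda_k}}\Hom_L(A_k,A_k^\vee)\to\cF_w(\Hom)\to(\cG(A_k):\cG(A_k^\vee))$ and shows it is multiplication by $z$, whereas you compute $\Psi_{\lambda_k}^{-1}(f)$ and $\cG(f)$ in closed form and then compare; this is the same argument packaged slightly differently.
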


\begin{proof}
Choosing basis elements $\lambda_{A_k} \in \Hom(A_k,A_w)$ and $\lambda_{A_k^{\vee}} \in \Hom(A_k^{\vee},A_w)$, Equation~\eqref{eq:embFw} reads
\begin{equation}\label{eq:GPsi}
    \begin{split}
        \Hom(A_k,A_w) &= \lambda_{A_k} \circ i_k(\mathcal{G}(A_k)),\\
        \Hom(A_k^{\vee},A_w) &= \lambda_{A_k^{\vee}} \circ i_k^{\vee}(\mathcal{G}(A_k^{\vee})).
    \end{split}
\end{equation}
Now we have two descriptions of $\Hom_L(A_k,A_k^\vee)$ in terms of fractional ideals in~$L$, namely, $\cG(\Hom_L(A_k,A_k^\vee)) = (\cG(A_k) : \cG(A_k^{\vee}))$ and $\Psi_{\lambda_{k}}^{-1}\left(\Hom_L(A_k,A_k^{\vee})\right) = (\overline{I}^t:I)$; the latter equality is Equation~\eqref{eq:psipsi}.
(Recall from~Remark~\ref{rmk:Ghom} that the $L$-isomorphism class of $\cG(\Hom_L(A_k, A_k^{\vee}))$ does not depend on the choice of functor $\cG$.) 
If we choose
\begin{equation}\label{eq:lambdachoice}
 \lambda_{A_k} = \lambda_{A_k^{\vee}} \circ \lambda_k
\end{equation} then it follows from~\eqref{eq:GPsi} that the natural map \eqref{eq:nat_map} is actually the identity.
Indeed, the map factors through
\begin{alignat*}{5}
    (\bar{I}^t : I) & \xrightarrow{\Psi_{\lambda_{k}}} \Hom(A_k,A_k^\vee) & \longrightarrow \cF_w\left(\Hom(A_k,A^\vee_k)\right) & \longrightarrow \left(\cG(A_k):\cG(A_k^\vee)\right)
    \\ z & \longmapsto \ \lambda_k\circ i_k(z) & y & \longmapsto  \left(g \mapsto i_k^{-1}\circ\lambda_{A_k}^{-1}\circ y \circ  \lambda_{A_k^\vee} \circ i^{\vee}_k(g)\right)
    \\ & \hspace{2.68cm}x & \longmapsto \left(\cF_w(x): h\mapsto h\circ x\right) &
\end{alignat*}
Hence, the image of $z\in (\bar{I}^t : I)$ is
\begin{align*}
     g \mapsto 
     & i_k^{-1}\circ\lambda_{A_k}^{-1}\circ \cF_w(\lambda_k\circ i_k(z)) \circ  \lambda_{A_k^\vee} \circ i^{\vee}_k(g) 
     = i_k^{-1}\circ\lambda_{A_k}^{-1}\circ  \lambda_{A_k^\vee} \circ i^{\vee}_k(g) \circ \lambda_k\circ i_k(z) \\
     & =  i_k^{-1}\circ\lambda_{A_k}^{-1}\circ  \lambda_{A_k} \circ \lambda_k^{-1} \circ i^{\vee}_k(g) \circ \lambda_k\circ i_k(z)
     = i_k^{-1}\circ \lambda_k^{-1} \circ  \lambda_k \circ i_k(g) \circ i_k(z) \\
     & =  i_k^{-1}\circ i_k(g) \circ i_k(z) = gz,
\end{align*}
that is, multiplication by $z$, as required.
\end{proof}

The following corollary of Lemma~\ref{lem:GhomPsi} is a strengthening of Proposition~\ref{prop:CSdual}.

\begin{cor}\label{cor:JJbar}
Assume that the hypotheses of Lemma~\ref{lem:GhomPsi} hold.
Then there exists an invertible ideal $J$ with $[J] \in \mathrm{Pic}(R_w)$ such that 
\begin{equation*}
\mathcal{G}(B^{\vee}) \simeq \left( J \overline{J} \right) \overline{\mathcal{G}(B)}^t  
\end{equation*}
for any $B \in \AV_w(p)$.
\end{cor}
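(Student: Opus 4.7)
My plan is to combine Lemma~\ref{lem:GhomPsi} with Proposition~\ref{prop:CSdual} by computing the image under $\cG$ of $\Hom_L(A_k,A_k^{\vee})$ in two ways, for the specific reduction $A_k$ of the abelian variety $A$ furnished by the hypotheses of Lemma~\ref{lem:GhomPsi}.

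First, Lemma~\ref{lem:GhomPsi} provides, for an appropriate choice of basis elements, an equality of fractional ideals in $L$:
\[
(\cG(A_k) : \cG(A_k^{\vee})) \;=\; \cG\bigl(\Hom_L(A_k,A_k^{\vee})\bigr) \;=\; \Psi_{\lambda_k}^{-1}\bigl(\Hom_L(A_k,A_k^{\vee})\bigr) \;=\; (\overline{I}^t : I),
\]
where $I=\mathcal{H}(A)$. On the other hand, Proposition~\ref{prop:CSdual} supplies an invertible $R_w$-ideal $H$, independent of $A_k$, such that $\cG(A_k^{\vee})\simeq H\,\overline{\cG(A_k)}^t$. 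Writing $M=\cG(A_k)$, substituting the second expression into the first and passing to classes in $\Pic(R_w)$ yields
\[
[H] \;=\; \bigl[(M:\overline{M}^t)\bigr]\cdot \bigl[(\overline{I}^t:I)\bigr]^{-1}.
\]

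Next, I would rewrite the right-hand side in norm form. Under the hypotheses of Lemma~\ref{lem:GhomPsi} (which include those of Proposition~\ref{prop:redhomS}), $S=\overline{S}$ is Gorenstein and $(I:\overline{I}^t)$, $(M:\overline{M}^t)$ are invertible. Using the standard identities $(X:Y)=XY^{-1}$ and $Y^t=Y^{-1}R_w^t$ for invertible $Y$, the commutation $\overline{Y^t}=\overline{Y}^t$, and the bar-invariance $\overline{R_w^t}=R_w^t$ (which follows from $\overline{R_w}=R_w$), one checks that
\[
\bigl[(M:\overline{M}^t)\bigr] \;=\; [M\overline{M}]\cdot [R_w^{-t}], \qquad \bigl[(\overline{I}^t:I)\bigr] \;=\; [I\overline{I}]^{-1}\cdot [R_w^t].
\]
Substituting and using that the norm $X\mapsto X\overline{X}$ is multiplicative and that $R_w^{-t}=\overline{R_w^{-t}}$ gives
\[
[H] \;=\; \bigl[MI\cdot R_w^{-t}\bigr]\cdot \bigl[\overline{MI\cdot R_w^{-t}}\bigr],
\]
so setting $J:=MI\,R_w^{-t}$ yields $[H]=[J\overline{J}]$ in $\Pic(R_w)$, as required. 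The conclusion $\cG(B^{\vee})\simeq (J\overline{J})\,\overline{\cG(B)}^t$ for every $B\in\AV_w(p)$ then follows from Proposition~\ref{prop:CSdual} since $H$ was independent of the choice of variety.

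The hard part will be justifying the displayed ideal identities when $M$ and $I$ are only $S$-ideals, not necessarily invertible over $R_w$. I expect to resolve this either by localizing at each prime of $R_w$ (where invertibility holds automatically at primes away from the conductor $(R_w:\mathcal{O}_L)$, and where the Gorenstein hypothesis handles the remaining primes via duality for reflexive modules), or by replacing $M$ and $I$ by isomorphic ideals coprime to $(R_w:\mathcal{O}_L)$ as in the construction of $H$ at the end of the proof of Proposition~\ref{prop:CSdual}, before reassembling a global invertible $R_w$-ideal $J$ representing the desired class.
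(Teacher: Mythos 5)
Your proposal is correct and follows essentially the same route as the paper: both use Lemma~\ref{lem:GhomPsi} to equate $(\cG(A_k):\cG(A_k^\vee))$ with $(\overline{I}^t:I)$, substitute Proposition~\ref{prop:CSdual}'s expression $\cG(A_k^\vee)\simeq H\,\overline{\cG(A_k)}^t$, and solve for the class of $H$; your $J = M I\,R_w^{-t}$ coincides with the paper's $J = I\cdot\bigl(\cG(A_k)^t\bigr)^{-1}$ since $(M^t)^{-1}=MR_w^{-t}$. The invertibility subtlety you flag at the end is real but is handled implicitly in the paper (the hypotheses of Proposition~\ref{prop:redhomS} make the relevant colon ideals invertible), so your localization/coprime-representative remedy, while not strictly needed, is a sound way to make the Picard-group manipulations airtight.
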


\begin{proof}
By Proposition~\ref{prop:CSdual}, it suffices to show that the invertible ideal $H$ appearing in that proposition is of the form $H \simeq J \overline{J}$. Since $H$ is fixed for all varieties in $\AV_w(p)$, we determine it by considering the variety $A_k \in \AV_w(p)$ which is the reduction of $A/K$.
It follows from Equation~\eqref{eq:HPsi} and Lemma~\ref{lem:GhomPsi}, if we choose basis elements as in~\eqref{eq:lambdachoice}, that
\[
(\mathcal{G}(A_k): H\ \overline{\mathcal{G}(A_k)}^t) \simeq \mathcal{G}(\Hom(A_k,A_k^{\vee})) = \Psi^{-1}(\Hom(A_k,A_k^{\vee})) = (\overline{I}^t:I).
\]
Hence,
\[
 H\cdot \overline{\mathcal{G}(A_k)}^t\cdot \mathcal{G}(A_k)^t  \simeq \overline{I} I 
\]
and therefore $H \simeq J \overline{J}$ for $J= I\cdot  \left(\mathcal{G}(A_k)^t\right)^{-1}$.
\end{proof}

\begin{cor}\label{cor:Fwmod}
Assume that the hypotheses of Lemma~\ref{lem:GhomPsi} hold. Let $h$ be a Weil polynomial satisfying $(\dagger)$ (cf.~Notation~\ref{not2}), and let $J_1$ be any invertible $R_w$-ideal. Choose an abelian variety $A_w$ with minimal endomorphism ring $R_w$ which determines the functor $\cF_w$ as in \eqref{eq:cFw} and hence determines $\cG$.
Then there exists an invertible $R_w$-ideal $J_2$ such that the modifcation $\cG'$ of $\cG$, obtained by replacing $A_w$ with $A_w' = A_w \otimes_{R_w} J_2$ as in Lemma~\ref{lem:cFmod}, satisfies
\[
\cG'(A^{\vee}) \simeq \left(J_1 \overline{J_1}\right) \overline{\cG'(A)}^t
\]
for any $A \in \AV_h(p)$.
\end{cor}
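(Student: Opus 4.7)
The plan is to reduce the statement to the existence result of Corollary~\ref{cor:JJbar} and to track how the functor changes under modification by an invertible ideal. By Corollary~\ref{cor:JJbar}, there already exists an invertible $R_w$-ideal $J$ (depending on the fixed choice of $A_w$) such that $\mathcal{G}(A^{\vee}) \simeq (J\overline{J})\,\overline{\mathcal{G}(A)}^t$ for every $A \in \AV_h(p)$. The freedom that remains is the choice of $A_w$, and Lemma~\ref{lem:cGEnd} describes exactly how $\mathcal{G}$ transforms: if one replaces $A_w$ by $A_w' = A_w \otimes_{R_w} J_2$, then the resulting functor~$\mathcal{G}'$ satisfies $\mathcal{G}'(B) \simeq J_2\,\mathcal{G}(B)$ for every $B$, in particular for both $A$ and $A^{\vee}$.

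The key computational step is to compare the two sides of the desired formula after modification. Using the standard identity $(J_2 I)^t = J_2^{-1} I^t$ for invertible $J_2$ (which follows from the defining equality $I^t = \{a \in L : \Tr_{L/\Q}(aI) \subseteq \Z\}$) and the fact that complex conjugation commutes with the trace dual, one obtains
\[
\mathcal{G}'(A^{\vee}) \simeq J_2\,(J\overline{J})\,\overline{\mathcal{G}(A)}^t
\quad\text{and}\quad
(J_1 \overline{J_1})\,\overline{\mathcal{G}'(A)}^t \simeq (J_1 \overline{J_1})\,\overline{J_2}^{-1}\,\overline{\mathcal{G}(A)}^t.
\]
For these to agree it suffices to have $J_2 \overline{J_2} \simeq (J_1 J^{-1})\,\overline{(J_1 J^{-1})}$ in $\Pic(R_w)$, so the natural choice is $J_2 := J_1 J^{-1}$. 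A direct substitution then verifies that both sides equal $J_1 \overline{J}\,\overline{\mathcal{G}(A)}^t$, completing the argument.

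The only possible obstacle lies in justifying the manipulations of trace duals and conjugates at the level of ideal classes in $\Pic(R_w)$, which requires the invertibility of $J$, $J_1$, and $J_2$ in $R_w$ and the fact that $R_w = \overline{R_w}$ (already used implicitly in Corollary~\ref{cor:JJbar}); both were established in the proof of Proposition~\ref{prop:CSdual}. The uniformity over $A \in \AV_h(p)$ of the resulting isomorphism follows from the corresponding uniformity in Corollary~\ref{cor:JJbar}, because the ideal $J_2 = J_1 J^{-1}$ depends only on the given $J_1$ and on the ambient $J$, not on~$A$.
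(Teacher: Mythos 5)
Your proposal is correct and follows essentially the same route as the paper: invoke Corollary~\ref{cor:JJbar} to obtain the ideal $J$ (the paper calls it $J_3$), set $J_2 = J_1 J^{-1}$, and verify the identity via Lemma~\ref{lem:cGEnd} together with the standard rules $(J_2 I)^t \simeq J_2^{-1} I^t$ and compatibility of conjugation with trace duals. The only difference is presentational — you spell out the verification and justify the ideal-class manipulations a bit more explicitly than the paper does.
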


\begin{proof}
By Corollary~\ref{cor:JJbar}, we have
\[
\cG(A^{\vee}) \simeq \left(J_3 \overline{J_3}\right) \overline{\cG(A)}^t,
\]
for some invertible $R_w$-ideal $J_3$. Let $J_2 = J_1 J_3^{-1}$ and let $\cG'$ be the corresponding modified functor. Then by Lemma~\ref{lem:cGEnd} we have
\[
\cG'(A^{\vee}) \simeq J_2 \cG(A^{\vee}) \simeq J_2 \left(J_3 \overline{J_3}\right) \overline{\cG(A)}^t \simeq \left( J_2 \overline{J_2}\right) \left(J_3 \overline{J_3}\right) \overline{\cG'(A)}^t = \left(J_1 \overline{J_1}\right) \overline{\cG'(A)}^t,
\]
as required.
\end{proof}

The following proposition shows how the $L$-linear structures for $\cG$ behave with respect to duals of homomorphisms.

\begin{prop}\label{prop:dual}
Fix a map $\eta \in \Hom(A_w^\vee,A_w)$.
For every $A \in \AV_h(p)$, we choose embeddings $i_A, i_{A^{\vee}}$ and basis elements $\lambda_{A}, \lambda_{A^{\vee}}$ such that 
\[
\begin{split}
    & \cF_w(A) = \Hom(A,A_w) = \lambda_A\circ i_A(\cG(A)), \\
    & \cF_w(A^\vee) = \Hom(A^\vee,A_w) = \lambda_{A^\vee}\circ i_{A^\vee}(\cG(A^\vee)),\\
    & \lambda_{A^\vee} \circ \lambda_{A}^\vee = \eta.
\end{split}
\]
If $f:A\to B$ is a morphism between two abelian varieties in $\AV_h(p)$, then for the dual morphism $f^{\vee}: B^{\vee} \to A^{\vee}$ we obtain
\begin{equation*}
\mathcal{G}(f^{\vee}) = \bar{\mathcal{G}(f)}.
\end{equation*}
\end{prop}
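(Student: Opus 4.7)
The plan is to derive $\cG(f^\vee) = \overline{\cG(f)}$ from two instances of the defining equation for $\cG$---one for $f$ and one for $f^\vee$---and to match both sides using the common hypothesis $\lambda_{X^\vee}\circ\lambda_X^\vee = \eta$ for $X\in\{A,B\}$. Since $\cF_w = \Hom(-,A_w)$ sends a morphism to pre-composition with it, applying $\cF_w(f)$ to the distinguished generator $\lambda_B \in \Hom(B,A_w)$ (which under the identification of~\eqref{eq:embFw} corresponds to $1 \in \cG(B)$) yields
\begin{equation}\label{eq:Gf-plan}
\lambda_B \circ f = \lambda_A \circ i_A(\cG(f)) \quad \text{in } \Hom(A,A_w),
\end{equation}
and analogously $\cG(f^\vee)$ is characterized by
\begin{equation}\label{eq:Gfvee-plan}
\lambda_{A^\vee} \circ f^\vee = \lambda_{B^\vee} \circ i_{B^\vee}(\cG(f^\vee)) \quad \text{in } \Hom(B^\vee,A_w).
\end{equation}

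I would then dualize \eqref{eq:Gf-plan}, use the convention $i_A(l)^\vee = i_{A^\vee}(\overline{l})$ (which follows from Remark~\ref{rem:Llinearity}.2, since the $L$-structure on $A^\vee$ is the natural one making dualization an anti-isomorphism compatible with the canonical involution), and then post-compose with $\lambda_{A^\vee}$. Invoking the $L$-linearity $\lambda_{A^\vee}\circ i_{A^\vee}(l) = i_{A_w}(l) \circ \lambda_{A^\vee}$ for $l \in L$---a consequence of Remark~\ref{rmk:leftSrightRw}, where the common Frobenius makes the left- and right-$R_w$-actions on $\Hom(A^\vee,A_w)$ agree, and then extending to all of $L$ by $\Q$-linearity in $\Hom^0$---together with the hypothesis $\lambda_{A^\vee}\circ \lambda_A^\vee = \eta$, yields
\[
\lambda_{A^\vee} \circ f^\vee \circ \lambda_B^\vee = i_{A_w}(\overline{\cG(f)}) \circ \eta.
\]
Starting instead from~\eqref{eq:Gfvee-plan} and post-composing on the right with $\lambda_B^\vee$, a symmetric computation using the $L$-linearity of $\lambda_{B^\vee}$ and the hypothesis $\lambda_{B^\vee}\circ\lambda_B^\vee = \eta$ produces
\[
\lambda_{A^\vee} \circ f^\vee \circ \lambda_B^\vee = i_{A_w}(\cG(f^\vee)) \circ \eta.
\]

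Equating the two expressions and cancelling the isogeny $\eta$---which is nonzero because $\lambda_A$ and $\lambda_{A^\vee}$ are nonzero generators and any nonzero morphism in the squarefree isogeny class $\AV_h(p)$ is an isogeny---then using the injectivity of $i_{A_w}$ on $L$, delivers $\overline{\cG(f)} = \cG(f^\vee)$. The main subtlety to check carefully is the $L$-linearity claim for $\lambda_{A^\vee}$ and $\lambda_{B^\vee}$: that post-composition intertwines the $L$-actions $i_{X^\vee}$ on $X^\vee$ and $i_{A_w}$ on $A_w$. This rests on the Frobenius-compatibility of the various $L$-structures afforded by the Centeleghe-Stix setup recorded in Remark~\ref{rmk:leftSrightRw}, which ensures the single action of $\pi$---and hence, by $\Q$-linear extension, of all of $L$---is consistent on both sides of $\Hom^0(X^\vee, A_w)$.
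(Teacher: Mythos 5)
Your proposal is correct and follows essentially the same route as the paper's proof: you set up the same two defining equations $\lambda_B\circ f=\lambda_A\circ i_A(\cG(f))$ and $\lambda_{A^\vee}\circ f^\vee = \lambda_{B^\vee}\circ i_{B^\vee}(\cG(f^\vee))$, dualize the first using the convention $i_A(l)^\vee=i_{A^\vee}(\bar l)$ from Remark~\ref{rem:Llinearity}.2, and invoke the left/right compatibility of the $L$-action on $\Hom(-,A_w)$ from Remark~\ref{rmk:leftSrightRw}, exactly as the paper does. The only cosmetic difference is that you compute $\lambda_{A^\vee}\circ f^\vee\circ\lambda_B^\vee$ two ways and cancel the isogeny $\eta$ at the end, whereas the paper isolates formulas for $\cG(f^\vee)$ and $\overline{\cG(f)}$ and observes they agree precisely when $\lambda_{A^\vee}\circ\lambda_A^\vee=\lambda_{B^\vee}\circ\lambda_B^\vee=\eta$; the content and the ingredients are identical.
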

\begin{proof}
Recall that $\cF_w(f):\cF_w(B)\to\cF_w(A)$ sends $h \mapsto h \circ f$.
Since $\lambda_B$ is the basis element of~$\cF_w(B)$, we have that $\lambda_B = \lambda_B \circ i_B(\cG(\lambda_B))$, which implies that $\cG(\lambda_B) = i_B^{-1}(\mathrm{id}_B) = 1$.
Hence we have $i_A(\cG(\lambda_B))=\mathrm{id}_A$, because $i_A$ is a ring isomorphism.
In particular, we obtain
\[
\begin{split}
\cF_w(f)(\lambda_B) = \lambda_B \circ f & = \lambda_A\circ i_A(\cG(\lambda_B\circ f))\\
        & = \lambda_A\circ i_A(\cG(f)) \circ i_A(\cG(\lambda_B)) \\
        & = \lambda_A\circ i_A(\cG(f)).
\end{split}    
\]
This implies that (after tensoring with $\mathbb{Q}$) we have an expression
\begin{equation}\label{eq:Gf}
f = \lambda_B^{-1} \circ\lambda_A\circ i_A(\cG(f)).
\end{equation}
Starting from $\cF_w(f^\vee)$, by the same construction we obtain 
\begin{equation}\label{eq:Gfvee}
\begin{split}
f^{\vee} & = \lambda_{A^\vee}^{-1} \circ\lambda_{B^\vee}\circ i_{B^\vee}(\cG(f^\vee))\\
    & = \lambda_{A^\vee}^{-1} \circ i_{A_w}(\cG(f^\vee)) \circ\lambda_{B^\vee} \\
    & = i_{A^\vee}(\cG(f^\vee)) \circ \lambda_{A^\vee}^{-1} \circ \lambda_{B^\vee},
\end{split}    
\end{equation}
where the second and third equalities hold because of Remark \ref{rmk:leftSrightRw}.
Equations~\eqref{eq:Gf} and~\eqref{eq:Gfvee} yield
\begin{equation}\label{eq:GfGfvee}
\begin{split}
\cG(f) & = i_A^{-1} \circ \lambda_A^{-1} \circ \lambda_B \circ f;\\
\cG(f^\vee) & = i_{A^\vee}^{-1} \circ f^\vee \circ \lambda_{B^\vee}^{-1}\circ \lambda_{A^\vee}.
\end{split}    
\end{equation}
Observe that by Remark~\ref{rem:Llinearity}.2 we have
\begin{equation}\label{eq:Gfbar}
\begin{split}
\bar{\cG(f)} & = i_{A^\vee}^{-1} \circ \left( \lambda_A^{-1} \circ \lambda_B \circ f  \right)^\vee \\
    & = i_{A^\vee}^{-1} \circ f^\vee \circ \lambda_B^\vee \circ (\lambda_A^\vee)^{-1}.
\end{split}    
\end{equation}
Comparing~\eqref{eq:GfGfvee} and~\eqref{eq:Gfbar}, we see that $\bar{\cG(f)} = \cG(f^\vee)$ if and only if $ \lambda_{B^\vee}^{-1}\circ \lambda_{A^\vee} = \lambda_B^\vee \circ (\lambda_A^\vee)^{-1} $, or equivalently 
\[ \lambda_{A^\vee} \circ \lambda_A^{\vee} = \lambda_{B^\vee} \circ \lambda_B^\vee. \]
By our assumption both sides of this equality are equal to $\eta$.
\end{proof}

\begin{remark}\label{rem:choicelambda}
The upshot of the preceding results is as follows: 
\begin{enumerate}
    \item Corollaries~\ref{cor:JJbar} and~\ref{cor:Fwmod} show that we can modify the functor $\cG$ such that $\cG(A^{\vee}) \simeq \overline{\cG(A)}^t$. By modifying the basis elements if necessary, we can therefore moreover ensure that $\cG(A^{\vee}) = \overline{\cG(A)}^t$.
    \item The choice of basis elements $\lambda_{A_k} = \lambda_{A_k^{\vee}} \circ \lambda_k$ in Lemma~\ref{lem:GhomPsi} for a variety in the isogeny class which admits a canonical lifting, and  the choices of basis elements $\lambda_B \circ \lambda_B^{\vee} = \eta$ in Proposition~\ref{prop:dual} for any variety $B$ in the isogeny class can be made simultaneously. 
    
    That is, both $\Psi^{-1}_{\lambda_k}(\Hom_L(A_k,A_k^{\vee})) = \cG(\Hom_L(A_k,A_k^{\vee}))$ and $\cG(f^{\vee}) = \overline{\cG(f)}$ for any $f \in \Hom(A,B)$ hold.
\end{enumerate}
\textbf{For the remainder of the paper, we will assume to have made the choices of Remark~\ref{rem:choicelambda}.}
Choose an abelian variety $A/K$ with CM by an algebra $L$ and with (good) reduction to $A_k/k$, satisfying $i_k^{-1}(\End(A_k)) = i^{-1}(\End(A)) = S$ for a Gorenstein order $S$ such that $S = \overline{S}$ (as in Proposition~\ref{prop:redhomS}, cf.~Remark~\ref{rem:Gorenstein}). Then the following diagram summarizes the notation and choices made throughout this section, where $\mathrm{red}$ denotes the reduction map and where all arrows have been shown to be bijections: 
\[\xymatrix{
\Hom(A,A^\vee) \ar[d]_{\mathrm{red}} \ar@<2pt>[dr]^{\mathcal{H}} & \\
\Hom(A_k,A_k^\vee)\ar[d]^{\cG} & (\bar I^t : I) \ar@<2pt>[ul]^{\Psi_{\lambda}} \ar[d]_{\alpha} \\
(\cG(A_k) : \cG(A_k^\vee)) & (\bar I^t : I) \ar[ul]_{\Psi_{\lambda_k}} \ar@{=}[l]
}\]
\end{remark}

\section{Polarizations}\label{sec:pols}

In this section we prove the main result of this paper (Theorem~\ref{thm:main1}), which describes all polarizations of a given isogeny class in characteristic~$p$ when one variety in the isogeny class admits a CM lifting.

In Subsection~\ref{ssec:spreading}, we formally define polarizations and explain how to produce polarizations of varieties in a fixed isogeny class from one given polarization, reducing the problem to that of finding \emph{one} polarization in characteristic~$p$. The latter is carried out in Subsection~\ref{ssec:redpol}, where we exploit the connections to characteristic zero provided by CM-liftings and reduction discussed in Sections~\ref{sec:lifting} and~\ref{sec:cats}, together with the fact that polarizations in characteristic zero admit a very nice characterization (see Lemma~\ref{lem:Hpol}). 

\subsection{Spreading polarizations within isogeny classes}\label{ssec:spreading}

In this subsection we study isogenies between, and polarizations of, abelian varieties over an arbitrary field $K$. 

For an abelian variety $A$ over a field $K$, let $A^\vee$ be its dual, as before.
Since there exists a canonical isomorphism $A\simeq (A^{\vee})^{\vee}$, we will identify these below.

\begin{df}\label{def:symhom}
A homomorphism $\mu:A\to A^\vee$ is \emph{self-dual} if the dual homomorphism $\mu^{\vee}: A \to A^{\vee}$ satisfies $\mu=\mu^\vee$. 

Alternatively, for a line bundle $\cL$ on $A$, let $\varphi_{\cL} : A\to A^\vee$ be the homomorphism which on points is given by $x\mapsto [t^*_x\cL\otimes \cL^{-1}]$, where $t_x$ is the translation-by-$x$ map. 
Then $\mu$ is self-dual if there exist a finite field extension $K\subseteq K'$ and a line bundle $\cL$ on $A_{K'}$ such that $\mu_{K'}=\varphi_{\cL}$, cf.~\cite[Theorem 13.7]{polishchuk}.
\end{df}

\begin{df}\label{def:pol}
An isogeny $\mu:A\to A^\vee$ is a \emph{polarization} of $A$ if there exist a finite field extension $K \subseteq K'$ and an \emph{ample} line bundle $\cL$ on $A_{K'}$ such that
\begin{equation}\label{eq:defpol}
\mu_{K'}=\varphi_{\cL}. 
\end{equation}
In particular, polarizations are self-dual isogenies.
\end{df}

The following lemma will be used in the main theorem of this section (Theorem~\ref{thm:main1}).

\begin{lemma}\label{lem:isogsymm}
 Let $A$ and $B$ be abelian varieties over a field $K$ and let $f:A\to B$ and $\mu:B\to B^\vee$ be isogenies.
 Put $f^*\mu :=(f^\vee\circ\mu \circ f)$.
 Then $\mu$ is self-dual if and only if $f^*\mu$ is self-dual.
 Moreover, $\mu$ is a polarization of $B$ if and only if $f^*\mu$ is a polarization of $A$.
\end{lemma}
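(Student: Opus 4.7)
The plan is to combine the standard identity $f^\vee \circ \varphi_{\cL} \circ f = \varphi_{f^{*}\cL}$ with the good behavior of self-duality and of ampleness under isogenies. Throughout, I use the canonical identification $A = A^{\vee\vee}$, so that $(f^\vee)^\vee = f$.

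First I would handle self-duality. A direct dualization gives $(f^{*}\mu)^\vee = (f^\vee \circ \mu \circ f)^\vee = f^\vee \circ \mu^\vee \circ f = f^{*}(\mu^\vee)$. The forward direction is then immediate: if $\mu=\mu^\vee$ then $f^{*}\mu = f^{*}\mu^\vee = (f^{*}\mu)^\vee$. For the converse, $f^{*}\mu = (f^{*}\mu)^\vee$ reads $f^\vee\circ(\mu-\mu^\vee)\circ f = 0$. Since $f$ and $f^\vee$ are isogenies, they become isomorphisms in $\Hom^0$, so $\mu - \mu^\vee = 0$ in $\Hom^0(B,B^\vee)$; as $\Hom(B,B^\vee)$ is torsion-free and embeds into $\Hom^0(B,B^\vee)$, this gives $\mu=\mu^\vee$.

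For the polarization statement, the forward direction proceeds by pullback: if $\mu_{K'} = \varphi_{\cL}$ for some ample $\cL$ on $B_{K'}$ after a finite extension $K'/K$, then $(f^{*}\mu)_{K'} = f_{K'}^\vee \circ \varphi_{\cL} \circ f_{K'} = \varphi_{f_{K'}^{*}\cL}$, and $f_{K'}^{*}\cL$ is ample since $f_{K'}$ is a finite surjective morphism (pullback of an ample bundle along a finite surjective morphism is ample). For the reverse direction, assume $f^{*}\mu$ is a polarization, realized by an ample $\cM$ on $A_{K'}$. By the self-duality part $\mu$ is self-dual, so after enlarging $K'$ we may write $\mu_{K'}=\varphi_{\cL}$ for some line bundle $\cL$ on $B_{K'}$. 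Then $\varphi_{f_{K'}^{*}\cL}=\varphi_{\cM}$, meaning $f_{K'}^{*}\cL$ and $\cM$ differ by an element of $\Pic^0(A_{K'})$ and are in particular numerically equivalent; since ampleness depends only on the numerical class, $f_{K'}^{*}\cL$ is ample. Finally, descent of ampleness along a finite surjective morphism (e.g.\ EGA II, 6.6.3) yields that $\cL$ itself is ample, so $\mu$ is a polarization.

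The main obstacle is the descent step in the reverse direction: one must both represent the self-dual isogeny $\mu$ by an actual line bundle (possibly after enlarging the field so that $\Pic^0(B_{K'}) \to \Pic^0(B_{\overline{K}})$ captures enough sections), and invoke the standard but nontrivial fact that ampleness descends through finite surjective morphisms. The rest of the argument is a formal unwinding of the definitions together with the compatibility of $\varphi_{(-)}$ with pullback.
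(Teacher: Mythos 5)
Your proof is correct, and it fills in details where the paper largely defers to a citation. For the self-duality part your argument is the same ``direct computation using torsion-freeness'' the paper alludes to: from $(f^{*}\mu)^{\vee}=f^{*}(\mu^{\vee})$ you deduce $f^{\vee}\circ(\mu-\mu^{\vee})\circ f=0$, invert $f$ and $f^{\vee}$ in $\Hom^{0}$, and use that $\Hom(B,B^{\vee})\hookrightarrow\Hom^{0}(B,B^{\vee})$. For the forward polarization implication you also match the paper: pullback of ample under a finite surjection is ample. For the reverse implication, however, the paper (following the cited reference) indicates an argument via ``$f^{*}$ preserves effectiveness,'' i.e.\ the index/effectiveness criterion for ampleness of a non-degenerate line bundle on an abelian variety: $L$ is non-degenerate since $\mu$ is an isogeny, the index is invariant under isogeny pullback, and index $0$ is equivalent to ampleness. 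You instead take a more general algebro-geometric route: identify $f^{*}\cL$ with $\cM$ up to $\Pic^{0}$, conclude $f^{*}\cL$ is ample since ampleness is a numerical property, and then invoke descent of ampleness along finite surjective morphisms. That last fact is correct (e.g.\ by Nakai--Moishezon, pushing intersection numbers along a dominating subvariety), but your EGA~II citation looks slightly off --- the statement that $f^{*}L$ ample implies $L$ ample for $f$ finite surjective is usually cited as EGA III$_{1}$, 2.6.2 or Hartshorne Exercise III.5.7(d). The upshot is that your route trades the abelian-variety-specific index argument for a general ampleness-descent lemma; both are clean, and yours is arguably more self-contained since it does not require the vanishing theorems underlying the index computation.
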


\begin{proof}
The first statement follows by direct computation, making use of the fact that $\mathrm{End}(B)$ is torsion-free.
A proof for the second statement can be found in \cite[Propositions 11.8 and 11.25(ii)]{MvdGE}. It uses the first statement, as well as the fact that $f^*$ preserves ampleness (for the forward implication) and effectiveness (for the reserve implication).
\end{proof}

\subsection{Characterizing polarizations}\label{ssec:redpol}

In this subsection, we revert to Notation~\ref{not3}: we let $K$ be a $p$-adic field with residue field $k$ and let an abelian variety $A/K$ (with good reduction) have reduction $A_k = A \otimes k$. Let $h$ be a squarefree Weil $p$-polynomial without real roots. 

We start with two useful results from the literature that characterize polarizations.
\begin{lemma}\label{lem:redpol}
Let $A/K$ and $A_k/k$ be as above.
Let $\mu_K: A \to A^{\vee}$ be an isogeny and consider its reduction $\mu_k : A_k \to A_k^{\vee}$.
Then $\mu_K: A\to A^\vee$ is a polarization if and only if $\mu_k: A_k \to A^\vee_k$ is a polarization.
\end{lemma}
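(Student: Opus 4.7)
The plan is to use the good reduction hypothesis to extend the entire situation to an abelian scheme over $\mathcal{O}_K$, and then invoke the standard fact that polarizations of abelian schemes over a connected base can be detected on any single fiber.

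First I would use that $A/K$ has good reduction to obtain an abelian scheme $\mathcal{A}/\mathrm{Spec}(\mathcal{O}_K)$ with generic fiber $A$ and special fiber $A_k$; its dual $\mathcal{A}^\vee$ is then also an abelian scheme, with fibers $A^\vee$ and $A_k^\vee$. Since $\mathcal{A}^\vee$ is smooth and proper, it is the N{\'e}ron model of $A^\vee$, so the isogeny $\mu_K: A \to A^\vee$ extends uniquely to an $\mathcal{O}_K$-homomorphism $\mu: \mathcal{A} \to \mathcal{A}^\vee$. By uniqueness of the extension, the special fiber of $\mu$ is $\mu_k$, and $\mu$ remains an isogeny on every fiber (degrees being locally constant in flat families).

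Next I would invoke the standard result that a symmetric homomorphism $\mu: \mathcal{A} \to \mathcal{A}^\vee$ of abelian schemes over a connected Noetherian base $S$ is a polarization if and only if its restriction to any one geometric fiber is a polarization. One direction uses smoothness of the relative Picard scheme $\mathrm{Pic}^0_{\mathcal{A}/S}$ together with Grothendieck's formal existence theorem to extend or lift a witnessing ample line bundle across the closed fiber; the other uses that for a line bundle $\mathcal{L}$ with $\varphi_\mathcal{L}$ an isogeny, ampleness is equivalent to the positivity of the Euler characteristic $\chi(\mathcal{L})$ (via Mumford's formula $\chi(\mathcal{L})^2 = \deg \varphi_\mathcal{L}$), which is in turn locally constant in a flat family. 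Applied to $S = \mathrm{Spec}(\mathcal{O}_K)$, which is connected, this yields the desired equivalence between $\mu_K$ being a polarization of $A$ and $\mu_k$ being a polarization of $A_k$.

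The main obstacle is justifying the fiberwise characterization of polarizations of abelian schemes, especially in the direction which lifts an ample line bundle from the special fiber to the generic fiber. This combines deformation theory (smoothness of $\mathrm{Pic}^0_{\mathcal{A}/\mathcal{O}_K}$, allowing adjustment by elements of $\mathrm{Pic}^0$ that extend through $\mathcal{A}^\vee$) with algebraization of the resulting formal line bundle; once the extension is in hand, the Euler characteristic argument transports ampleness from one fiber to the other.
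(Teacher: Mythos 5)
The paper gives no direct argument for this lemma and simply cites \cite[p.~46, Lemma~2.1.1.1]{chaiconradoort14}; your sketch reconstructs the standard proof that underlies that reference, and its structure is correct: spread $\mu_K$ out to $\mu:\mathcal{A}\to\mathcal{A}^\vee$ over $\mathcal{O}_K$ using that $\mathcal{A}^\vee$ is the N\'eron model of $A^\vee$, and then appeal to the fact that for a symmetric homomorphism of abelian schemes over a connected Noetherian base the locus where the fiber is a polarization is open and closed. One remark that would both streamline the write-up and sidestep a subtlety: the formal-lifting-plus-algebraization step for lifting an ample line bundle across the closed fiber is avoidable, and potentially delicate (the obstruction to lifting $L_0$ itself a priori lies in $H^2(A_k,\mathcal{O}_{A_k})$, a $k$-vector space, so the argument that it is $2$-torsion and hence vanishes does not obviously work in characteristic $2$). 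Instead, form $\mathcal{M} := (\mathrm{id},\mu)^*\mathcal{P}$ on $\mathcal{A}$ directly, where $\mathcal{P}$ is the Poincar\'e bundle on $\mathcal{A}\times_{\mathcal{O}_K}\mathcal{A}^\vee$: this is already a line bundle on the whole abelian scheme, non-degenerate on each fiber with $\varphi_{\mathcal{M}_s}=2\mu_s$, so your Euler-characteristic/local-constancy argument shows that $\mathcal{M}_s$ is ample on one fiber if and only if on both, whence $2\mu_s$, and therefore $\mu_s$ (being a symmetric isogeny), is a polarization on one fiber if and only if on both. Finally, you should make explicit that the extension $\mu$ is symmetric over $\mathcal{O}_K$ and that $\mu_k$ remains an isogeny --- for example by extending a quasi-inverse $\nu$ with $\nu\circ\mu_K=[n]$ via the N\'eron property and specializing, using injectivity of $\mathrm{Hom}(\mathcal{A},\mathcal{A}^\vee)\to\mathrm{Hom}(A_k,A_k^\vee)$; these are routine but deserve a sentence.
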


\begin{proof}
See e.g.~\cite[p.~46 and Lemma~2.1.1.1]{chaiconradoort14}. 
\end{proof}

For the remainder of this subsection, we further assume that $A/K$ has CM by an algebra~$L$.

\begin{lemma}\label{lem:Hpol}
Let $A/K$ be as above with CM by an algebra $L$. Let $\mu_K: A\to A^\vee$ be an $L$-linear isogeny.
Put $\lambda=\mathcal{H}(\mu_K)\in~L$. Then $\mu_K$ is a polarization if and only if $\lambda$ is totally imaginary (that is, $\overline{\lambda} = -\lambda$)  and $\Phi$-positive (that is, $\Im(\vphi(\lambda))>0$ for every $\vphi\in\Phi$).
\end{lemma}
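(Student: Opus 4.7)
The plan is to reduce the statement to the classical characterization of polarizations of complex abelian varieties in terms of Riemann forms, which is available because $A$ has CM. First, fix an embedding $j: \overline{K} \hookrightarrow \mathbb{C}$. By Corollary~\ref{cor:allhoms}, the $L$-linear isogeny $\mu_K: A \to A^\vee$ corresponds bijectively to an $L$-linear isogeny $\mu_\mathbb{C}: A_\mathbb{C} \to A_\mathbb{C}^\vee$, and by Lemma~\ref{lem:redpol} together with Lemma~\ref{lem:KbartoC} (applied to the ample cone), being a polarization is preserved and reflected under base change. Moreover, the element $\lambda = \mathcal{H}(\mu_K) \in L$ is by definition the image of $\mu_\mathbb{C}$ under the bijection
\[
\Hom_L(A_\mathbb{C}, A_\mathbb{C}^\vee) \;\longleftrightarrow\; (\overline{I}^t : I),
\]
recorded in Subsection~\ref{ssec:unif}, where $I = \mathcal{H}(A)$ so that $A_\mathbb{C}(\mathbb{C}) \simeq \mathbb{C}^g/\Phi(I)$ and $A_\mathbb{C}^\vee(\mathbb{C}) \simeq \mathbb{C}^g/\Phi(\overline{I}^t)$.

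Next, I would use the classical correspondence between self-dual $L$-linear isogenies $A_\mathbb{C} \to A_\mathbb{C}^\vee$ and $\mathbb{Q}$-valued Riemann forms on the rational Betti lattice $I \otimes \mathbb{Q} = L$ that are compatible with the $L$-action in the sense that $E(x \cdot u, v) = E(u, \overline{x}\cdot v)$ for $x \in L$; see for instance \cite[Chapter~I, Section~2]{lang} or the discussion in \cite[II.9]{Mum08}. Any such form on $L$ is uniquely of the shape
\[
E_\lambda(u,v) = \Tr_{L/\mathbb{Q}}(\lambda\, u\, \overline{v}), \qquad u,v \in L,
\]
for a unique $\lambda \in L$, and the correspondence $\mu_\mathbb{C} \mapsto \lambda$ agrees with $\mathcal{H}$ (up to a fixed normalization chosen in Definition~\ref{def:H}, which one checks directly by comparing colon ideals). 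Under this bijection, the basic dictionary is: $E_\lambda$ is alternating ($E_\lambda(u,v) = -E_\lambda(v,u)$) if and only if $\overline{\lambda} = -\lambda$, and the associated Hermitian form $H_\lambda$ on $\mathbb{C}^g = \prod_{\varphi \in \Phi} \mathbb{C}$ decomposes as
\[
H_\lambda\bigl((z_\varphi)_\varphi, (w_\varphi)_\varphi\bigr) = \sum_{\varphi \in \Phi} 2\, \Im\bigl(\varphi(\lambda)\bigr)\, z_\varphi \overline{w_\varphi},
\]
which is positive definite precisely when $\Im(\varphi(\lambda)) > 0$ for each $\varphi \in \Phi$.

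To conclude, I would invoke the characterization (Appell--Humbert / Riemann): an isogeny $\mu_\mathbb{C}: A_\mathbb{C} \to A_\mathbb{C}^\vee$ is a polarization if and only if the associated real alternating form $E$ on the universal cover satisfies $E(iu, iv) = E(u,v)$ and the associated Hermitian form is positive definite. In our $L$-linear setting, the first of these is automatic (it follows from $L$-linearity together with $\overline{\lambda} = -\lambda$, since $L$ contains a square root of a negative rational via the CM-structure), and the Hermitian positivity is exactly $\Phi$-positivity by the computation above. Combined with the reductions of the first paragraph, this proves both directions.

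The only real obstacle is bookkeeping: one has to verify that the bijection $\Hom_L^0(A_\mathbb{C}, A_\mathbb{C}^\vee) \leftrightarrow L$ coming from complex uniformization (Equation~\eqref{eq:Cunifhom2}) really sends the self-dual isogeny $\mu_\mathbb{C}$ to the element $\lambda$ whose Riemann form is $E_\lambda(u,v) = \Tr_{L/\mathbb{Q}}(\lambda u \overline{v})$, i.e.\ that the two normalizations of $\mathcal{H}$ agree. This is a direct computation using that the dual of $\mathbb{C}^g/\Phi(I)$ is identified with $\mathbb{C}^g/\Phi(\overline{I}^t)$ via the trace pairing, which is built into Definition~\ref{def:It}; once this identification is pinned down, the equivalences stated above are formal.
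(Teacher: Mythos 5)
Your proof is correct and follows essentially the same route as the paper: base change to $\mathbb{C}$ using the $L$-linear bijection on $\Hom$ groups, and then apply the classical Riemann-form / Appell--Humbert characterization of polarizations of complex CM abelian varieties (the paper outsources the second step to Howe's Proposition~4.9, which carries out the sesquilinear-form computation you sketch with $\Tr_{L/\Q}(\lambda u\bar v)$). Two small points: your citation of Lemma~\ref{lem:redpol} is misplaced --- that lemma concerns reduction from $K$ to its residue field, not base change to $\mathbb{C}$; here one only needs Definition~\ref{def:pol} together with Lemmas~\ref{lem:KtoKbar} and~\ref{lem:KbartoC} and the compatibility of ampleness with field extension. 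Also, the justification that the Riemann relation $E(iu,iv)=E(u,v)$ is ``automatic because $L$ contains a square root of a negative rational'' is off --- multiplication by $i$ on $\mathbb{C}^g$ is not realized by an element of $L$ in general; the right reason is that for $\bar\lambda=-\lambda$ the $\mathbb{R}$-bilinear extension of $\Tr_{L/\Q}(\lambda u\bar v)$ to $L\otimes\mathbb{R}\simeq\prod_{\vphi\in\Phi}\mathbb{C}$ satisfies the relation by a direct coordinatewise check, as your displayed formula for $H_\lambda$ implicitly already shows.
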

\begin{proof}
An isogeny $\mu_K$ is a polarization if and only if there is a field extension $K \subseteq K'$ and an ample line bundle over $K'$ that induces the extension $\mu_{K'}$, see for instance \cite[p. 126]{arithgeom86}.
Let $\mu_{\C}$ be the image of $\mu_K$ under the isomorphism from Corollary~\ref{cor:allhoms}. Then, in particular, $\mu_{\C}$ is induced by an ample line bundle if and only if $\mu_{K'}$ is. 

Put $I=\mathcal{H}(A)$. We follow the proof of \cite[Proposition 4.9]{Howe95}, noting that this part of the proof holds for any complex abelian variety with CM by $L$, 
and get a sesquilinear form $S:I \times I \to L$ with $S(s,t)=\overline{\lambda t} s$ (see also the proof of \cite[Theorem 5.4]{MarAbVar18}). 

It is then shown that $\mu_{\C}$ is a polarization if and only if $S$ is skew-symmetric and $-S(t,t)$, and hence $-\bar \lambda=\lambda$, are $\Phi$-positive.
\end{proof}

In order to be able to characterize polarizations over the finite field in the main theorem, we will need to get the best possible control over the element $\alpha$ in Proposition \ref{prop:redhomS}, which realizes the reduction map on the level of fractional ideals.

\begin{prop}\label{prop:betteralpha}
Let $A/K$ be an abelian variety with CM by an algebra $L$ that reduces to $A_k$ with $S\simeq\End(A)\simeq\End(A_k)$ and let $\alpha\in S^*$, as in Proposition \ref{prop:redhomS}.
Then $\alpha = \overline{\alpha}$; that is, $\alpha$ is totally real.
\end{prop}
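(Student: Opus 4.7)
The plan is to exploit the compatibility of reduction with the dualization $\mu\mapsto \mu^\vee$, and then use the explicit description of this dualization on the fractional ideal $(\overline{I}^t:I)$ via $\Psi_\lambda^{-1}$ and $\Psi_{\lambda_k}^{-1}$.

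First, I would compute the dualization involution on $\Hom_L(A,A^\vee)$ explicitly. Using $i(\ell)^\vee=i^\vee(\overline{\ell})$ (Remark~\ref{rem:Llinearity}.2) and writing $\lambda^\vee=\lambda\circ i(\beta)$ for a unique $\beta\in L^*$ (necessarily satisfying $\beta\,\overline{\beta}=1$ since $\vee\vee=\mathrm{id}$), a direct computation gives
\[
\bigl(\lambda\circ i(\ell)\bigr)^\vee \;=\; i^\vee(\overline{\ell})\circ\lambda^\vee \;=\; \lambda\circ i(\overline{\ell}\,\beta),
\]
so through $\Psi_\lambda^{-1}$ the involution $\mu\mapsto\mu^\vee$ corresponds to $\ell\mapsto\overline{\ell}\,\beta$ on $(\overline{I}^t:I)$. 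The same derivation on the char-$p$ side yields the analogous formula with a twist element $\beta_k$ defined by $\lambda_k^\vee=\lambda_k\circ i_k(\beta_k)$.

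Next, I would verify that with the basis choices of Remark~\ref{rem:choicelambda}, the two twists agree: $\beta=\beta_k$. On the char-$p$ side, applying Proposition~\ref{prop:dual} to $f=\lambda_k$ together with Lemma~\ref{lem:GhomPsi} (which identifies $\cG$ with $\Psi_{\lambda_k}^{-1}$ on $\Hom_L(A_k,A_k^\vee)$) pins down $\beta_k$; on the char-$0$ side, $\beta$ is determined by the description of $A^\vee$ through the trace dual (Equation~\eqref{eq:Cunifvee} and Remark~\ref{rem:H}). The normalizations in Remark~\ref{rem:choicelambda} are set up precisely so that these two values match.

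Finally, I would invoke the fact that reduction of abelian schemes commutes with formation of the dual, so $\mathrm{red}(\mu^\vee)=\mathrm{red}(\mu)^\vee$ for every $\mu\in\Hom_L(A,A^\vee)$. Translating through the formulae above and using that reduction acts as multiplication by $\alpha$ (Proposition~\ref{prop:redhomS}) gives
\[
\alpha\cdot\overline{\ell}\cdot\beta \;=\; \overline{\alpha\,\ell}\cdot\beta_k \;=\; \overline{\alpha}\cdot\overline{\ell}\cdot\beta_k
\]
for every $\ell\in(\overline{I}^t:I)$. Combining with $\beta=\beta_k$ cancels the common factor, giving $\alpha\,\overline{\ell}=\overline{\alpha}\,\overline{\ell}$; since $(\overline{I}^t:I)$ has full $\Q$-rank in $L$, this forces $\alpha=\overline{\alpha}$.

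The hard part will be the matching $\beta=\beta_k$. It is really a bookkeeping statement that the two functorial incarnations of duality — the complex-analytic one underlying $\mathcal{H}$ and the Centeleghe--Stix one underlying $\cG$ (via Corollary~\ref{cor:JJbar}) — have been normalized compatibly. All other ingredients (the semi-linear formula for dualization, and the commutation of reduction with duality) are quite formal.
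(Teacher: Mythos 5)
Your argument has the right overall shape and the final algebra does give $\alpha=\overline\alpha$ once $\beta=\beta_k$ is known, but you have misidentified where the mathematical content lies: the claim $\beta=\beta_k$ is not a normalization bookkeeping check, and this is the entire substance of the proof. The subtlety is that the self-duality $\mu=\mu^\vee$ of an isogeny $A\to A^\vee$ passes through \emph{two different biduality identifications} on the two sides of the reduction square: the canonical $A\simeq (A^\vee)^\vee$ for abelian varieties, and the biduality for $p$-divisible groups / Dieudonn\'e modules that underlies the Centeleghe--Stix functor. These two identifications differ by a sign. Concretely, if you apply Proposition~\ref{prop:dual} (which gives $\cG(f^\vee)=\overline{\cG(f)}$ with $f^\vee$ regarded as a map out of $(A^\vee)^\vee$) together with Lemma~\ref{lem:GhomPsi} in the naive way, you find $\beta_k=1$. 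On the other hand, the complex-analytic side forces $\beta=-1$ (polarizations are totally imaginary, Lemma~\ref{lem:Hpol}). These would be inconsistent, and your ``cancel $\beta$ against $\beta_k$'' step would fail. The discrepancy is resolved precisely by the fact, invoked in the paper via~\cite[Theorem~1.4.3.4]{chaiconradoort14}, that self-duality $\mu_k=\mu_k^\vee$ for abelian varieties translates under the Dieudonn\'e functor to $T_p(\mu_k)=-T_p(\mu_k^\vee)$, so that $\cG(\mu_k)=-\cG(\mu_k^\vee)=-\overline{\cG(\mu_k)}$; that minus sign is what makes $\beta_k=-1=\beta$.

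Once you see this, the paper's actual argument is the efficient way to organize the same idea: take any polarization $\mu$ of $A$ (one always exists), observe that $\mathcal{H}(\mu)$ is totally imaginary by Lemma~\ref{lem:Hpol}, that $\mu_k$ is still a polarization by Lemma~\ref{lem:redpol} hence self-dual, and that the Dieudonn\'e-side identity $\cG(\mu_k)=-\overline{\cG(\mu_k)}$ (with the sign accounted for as above) combined with $\cG(\mu_k)=\alpha\,\mathcal{H}(\mu)$ gives $\alpha=\overline\alpha$ at once. Your formalization in terms of the twist elements $\beta,\beta_k$ is harmless, but to make it a proof you would have to actually determine $\beta$ and $\beta_k$, at which point you land on essentially the same computation as the paper; deferring that step to ``bookkeeping'' is where the gap lies.
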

\begin{proof}
Let $\mu:A\to A^\vee$ be a polarization of $A$ (of any degree).
By Lemma \ref{lem:Hpol} we have that 
$\mathcal{H}(\mu)= - \overline{\mathcal{H}(\mu)}$.
So by Proposition \ref{prop:redhomS} we obtain
\begin{equation}\label{eq:betteralpha:1}
    \cG(\mu_k)=\alpha\mathcal{H}(\mu) = -\alpha\overline{\mathcal{H}(\mu)}.
\end{equation}
By Lemma \ref{lem:redpol}, the reduction $\mu_k$ of $\mu$ is a polarization of $A_k$, and in particular it is self-dual, i.e. $\mu_k=\mu_k^{\vee}$ (via double duality for abelian varieties). From the proof of Proposition~\ref{prop:CSdual} we see that its induced action $T_p(\mu_k)$ on $T_p(A)$ is multiplication by $\cG(\mu_k)$. By \cite[Theorem 1.4.3.4]{chaiconradoort14}, using the categorical equivalence between $p$-divisible groups and Dieudonn\'e modules, self-duality of $\mu_k$ is equivalent to the statement that $T_p(\mu_k)=-T_p(\mu_k^{\vee})$ via double duality for $p$-divisible groups or Dieudonn\'e modules, which in this case is the identity. It follows that $\cG(\mu_k)=-\cG(\mu_k^\vee)$. By Proposition \ref{prop:dual}, we have
$\cG(\mu_k^\vee) = \overline{\cG(\mu_k)}$, which gives us
\begin{equation}\label{eq:betteralpha:2}
\cG(\mu_k)=-\cG(\mu_k^\vee)=-\overline{\cG(\mu_k)}=-\overline{\alpha \mathcal{H}(\mu)}.
\end{equation}
Comparing \eqref{eq:betteralpha:1} and \eqref{eq:betteralpha:2}, we deduce that $\alpha = \overline{\alpha}$.
\end{proof}

Using the characterizations in the previous two lemmas, we now proceed to the main result. To state it, we need one additional definition.

\begin{df}\label{def:S}
Let $\Phi$ be a CM-type for $L$. Define $\mathcal{S}_{\Phi}$ to be the set of orders $S$ in~$L$ which are Gorenstein and satisfy $\overline{S} = S$, and for which there exists an abelian variety $A_0 \in \AV_h(p)$ with CM-type $\Phi$ and $\End(A_0) = S$ that admits a canonical lifting $A$ to a $p$-adic field $K$.
\end{df}

\begin{prop}\label{prop:CCO_effective}
Assume that $\cO_L\in \mathcal{S}_{\Phi}$.
Let $S\subseteq \cO_L$ be an order such that the cardinality $N=\vert \cO_L/S \vert$ is coprime to $p$.
Then there exists an abelian variety $B_0 \in \AV_h(p)$, with $\End(B_0)=S$, that admits a canonical lifting to a $p$-adic field $K$.
In particular, if $S$ is also Gorenstein and satisfies $S=\overline{S}$ then $S\in \mathcal{S}_{\Phi}$.
\end{prop}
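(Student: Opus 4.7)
The plan is to exploit the antiequivalence $\cG$ of Theorem~\ref{thm:Mar} to build an abelian variety $B_0$ with $\End(B_0)=S$ that is separably isogenous to $A_0$, and then to transfer the canonical lifting from $A_0$ to $B_0$ using Proposition~\ref{prop:sepisoglifting}. Concretely, the hypothesis $\cO_L\in\mathcal{S}_\Phi$ provides an abelian variety $A_0\in\AV_h(p)$ with $\End(A_0)=\cO_L$, CM-type $\Phi$, and a canonical lifting to a $p$-adic field. By Lemma~\ref{lem:cGEnd} we may modify $\cG$ by an invertible ideal so that $\cG(A_0)=\cO_L$. Since $R_w\subseteq S\subseteq \cO_L$, the order $S$ is a fractional $R_w$-ideal with multiplicator ring $(S:S)=S$, so by Theorem~\ref{thm:Mar} and Remark~\ref{rem:Endideal} there is an abelian variety $B_0:=\cG^{-1}(S)\in\AV_h(p)$ with $\End(B_0)=S$.

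The identity element $1\in(\cO_L:S)=(\cG(A_0):\cG(B_0))$ corresponds, under the bijection of Remark~\ref{rmk:Ghom}, to an isogeny $f\colon A_0\to B_0$ whose induced map on $\cG$ is the inclusion $S\hookrightarrow \cO_L$; its degree therefore equals $[\cO_L:S]=N$, which is coprime to $p$, so $f$ is separable. The canonical lifting of $A_0$ lifts all of $\End(A_0)=\cO_L$, and because $h$ is squarefree, $\End^0(A_0)=L$ is commutative, so $\End_L(A_0)=\End(A_0)$ also lifts. Applying Proposition~\ref{prop:sepisoglifting} to $f$ then yields a lifting of $B_0$ over the same $p$-adic domain under which $\End_L(B_0)$ lifts; squarefreeness of $h$ again gives $\End_L(B_0)=\End(B_0)=S$, so this lifting is canonical. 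The final assertion ($S\in\mathcal{S}_\Phi$ when $S$ is Gorenstein with $S=\overline{S}$) is then immediate from Definition~\ref{def:S}, since $B_0$ together with its canonical lifting exhibits $S$ as a member of $\mathcal{S}_\Phi$.

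The argument is mostly orchestration of earlier results; the one point to double-check is that the CM-type $\Phi$ is preserved by the lifting produced in Proposition~\ref{prop:sepisoglifting}. This follows from the explicit construction there: the separable isogeny $f$ induces an isomorphism on the local parts of the Dieudonn\'e modules (Lemma~\ref{lemma:Tploc_isom}), so the local part of $B_0[p^\infty]$ is lifted together with the $L$-action inherited from the canonical lifting of $A_0$, while the \'etale part lifts uniquely. Consequently $B_0$ and its lifting inherit the CM-type $\Phi$ from $A_0$, as required.
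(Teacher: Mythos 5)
Your proposal is correct, and it reaches the conclusion by a genuinely different route than the paper's own proof. The paper keeps the functor $\cG$ fixed, picks auxiliary varieties $A'_0$ and $B_0$ with $\cG(A'_0)=\cO_L$ and $\cG(B_0)=S$, invokes the generalization of Waterhouse's Theorem~5.1 to produce a separable isogeny $f_1\colon A_0\to A'_0$, checks via \cite[Proposition~29.2]{CentelegheStix15} that $\cG^{-1}(N)\colon B_0\to A'_0$ has degree $N^{2g-1}$ coprime to $p$, and then uses Waterhouse's Theorem~5.2 (separable isogeny is an equivalence relation) to reverse the latter and compose. You instead normalize the functor itself via Lemma~\ref{lem:cGEnd} so that $\cG(A_0)=\cO_L$, which collapses the chain $A_0\to A'_0\to B_0$ into the single isogeny of degree $N$ given by $1\in(\cO_L:S)$. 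This is tidier and avoids citing Waterhouse explicitly, but it hides the same content: the assertion that $\cG$ can be modified to achieve $\cG(A_0)=\cO_L$ is not literally what Lemma~\ref{lem:cGEnd} says — that lemma only tells you that modifying by an invertible $R_w$-ideal $J$ replaces $\cG(A_0)$ with $J\cG(A_0)$ — and to conclude that a suitable $J$ exists you need the (standard, but worth stating) surjectivity of $\Pic(R_w)\to\Pic(\cO_L)$, together with a rescaling to turn an isomorphism $J\cG(A_0)\simeq\cO_L$ into an equality. You should also note that the degree identity $\deg f = \#\bigl(\cG(A_0)/\cG(f)\cG(B_0)\bigr)$ for a general isogeny is exactly what the paper attributes to \cite[Proposition~29.2]{CentelegheStix15}; the version recorded in Theorem~\ref{thm:main1} is stated only for maps to the dual. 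Your closing paragraph on the CM-type being preserved through Proposition~\ref{prop:sepisoglifting} is a useful explicit remark that the paper's proof leaves implicit.
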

\begin{proof}
Since $\cO_L\in \mathcal{S}_{\Phi}$, there exists an abelian variety $A_0$ admitting a CM-lifting with $\End(A_0)=\cO_L$. Put $\cG(A_0)=I$.
Let $A'_0$ and $B_0$ be such that $\cG(A'_0)=\cO_L$ and $\cG(B_0)=S$.
Since the argument of \cite[Theorem 5.1]{Wat69} generalizes from the simple to the squarefree case, there is a separable isogeny $f_1:A_0\to A'_0$.
By assumption, $N$ is coprime to $p$ and $N\cO_L\subseteq S$, that is, $N\in (S:\cO_L)$.
It follows from \cite[Proposition 29.2]{CentelegheStix15} that $f_2:=\cG^{-1}(N): B_0\to A'_0$ is a separable isogeny.
By \cite[Theorem 5.2]{Wat69}, separable isogeny is an equivalence relation, so there exists an isogeny $f'_2: A'_0\to B_0$. In particular, $f'_2\circ f_1:A_0\to B_0$ is also a separable isogeny.
By Proposition \ref{prop:sepisoglifting}, we can then lift $B_0$ with its action by $\End(B_0)=S$ to~$K$.
\end{proof}

\begin{cor}\label{cor:Snonempty}\ 
\begin{enumerate}
    \item \label{cor:Snonempty:ord_almord} If $\AV_h(p)$ is an isogeny class of ordinary or almost-ordinary abelian varieties (in odd characteristic), then $R_w \in \mathcal{S}_{\Phi}$.
    \item If $(L,\Phi)$ satisfies the generalized residual reflex condition of Definition~\ref{def:RRC}, then we have $\mathcal{O}_L \in~\mathcal{S}_{\Phi}$.
    \item If $(L,\Phi)$ satisfies the generalized residual reflex condition of Definition~\ref{def:RRC}, then every overorder $S$ of $R_w$ such that $S$ is Gorenstein, $S=\overline{S}$ and $\vert \cO_L/ S \vert$ is coprime to $p$ is in $\mathcal{S}_{\Phi}$.
\end{enumerate}
\end{cor}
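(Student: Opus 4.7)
The strategy is to invoke the three main lifting results of the preceding subsections in turn: Propositions~\ref{prop:ord_lift} and~\ref{prop:almord_lift} for part~(1), Corollary~\ref{cor:CMlift} for part~(2), and Proposition~\ref{prop:CCO_effective} combined with part~(2) for part~(3). Throughout, recall that since $h$ is a Weil polynomial satisfying $(\dagger)$, in particular squarefree, the endomorphism ring of every abelian variety in $\AV_h(p)$ is commutative.

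For~(1), by \cite[Proposition~21]{CentelegheStix15} we may choose an abelian variety $A_w\in \AV_h(p)$ whose endomorphism ring is the minimal order $R_w$. In the ordinary case, Proposition~\ref{prop:ord_lift} produces a canonical lifting of $A_w$ to $W(\F_p)$; in the almost-ordinary case (in odd characteristic), Proposition~\ref{prop:almord_lift} produces a canonical lifting of $A_w$ to a ramified quadratic extension of $W(\F_p)$. Both targets are $p$-adic fields, as noted in Remark~\ref{rem:domain}. It remains to verify that $R_w$ is Gorenstein and satisfies $R_w=\overline{R_w}$; both facts are recorded and used at the beginning of the proof of Proposition~\ref{prop:CSdual}, the first by appeal to \cite[Proof of Proposition~9.4]{Howe95} and the second because $R_w$ is generated by $\pi$ and $p/\pi=\overline{\pi}$.

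For~(2), Corollary~\ref{cor:CMlift} provides an abelian variety $A'_0\in\AV_h(p)$ isogenous to any given starting variety, with $\mathcal{O}_L\subseteq\End(A'_0)$, and a CM-lifting $A'/E'$ with $\mathcal{O}_L\subseteq\End(A')$. Since both endomorphism rings are orders of $L$, the two containments force $\End(A'_0)=\End(A')=\mathcal{O}_L$, so the lifting is canonical. The maximal order $\mathcal{O}_L$ is a Dedekind ring, hence Gorenstein, and it is obviously stable under the canonical involution of $L$; so $\mathcal{O}_L\in\mathcal{S}_{\Phi}$.

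For~(3), part~(2) verifies the hypothesis $\mathcal{O}_L\in\mathcal{S}_{\Phi}$ of Proposition~\ref{prop:CCO_effective}. That proposition then yields, for any order $S\supseteq R_w$ with $\vert\mathcal{O}_L/S\vert$ coprime to $p$, an abelian variety $B_0\in\AV_h(p)$ with $\End(B_0)=S$ admitting a canonical lifting to a $p$-adic field. Combined with the standing assumption in~(3) that $S$ is Gorenstein and $S=\overline{S}$, this is exactly the content of $S\in\mathcal{S}_{\Phi}$. The only conceptual point worth checking, which is the main (though mild) obstacle, is the compatibility of CM-types: the lifting produced has CM-type $\Phi$ because the lifting in~(2) was constructed from a variety with CM-type $\Phi$ and Proposition~\ref{prop:sepisoglifting} (used inside Proposition~\ref{prop:CCO_effective}) preserves the CM-structure by $L$.
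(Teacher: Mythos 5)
Your proof is correct and follows essentially the same approach as the paper: part (1) invokes Propositions~\ref{prop:ord_lift} and~\ref{prop:almord_lift} for a variety with endomorphism ring $R_w$ and verifies the Gorenstein and conjugate-stability conditions, part (2) applies Corollary~\ref{cor:CMlift} together with maximality of $\mathcal{O}_L$, and part (3) combines part (2) with Proposition~\ref{prop:CCO_effective}. The only cosmetic difference is your citation for the fact that $R_w$ is Gorenstein and stable under conjugation (you cite Howe's Proposition~9.4 and give a direct argument for $\overline{R_w}=R_w$, while the paper cites \cite[Theorem~11]{CentelegheStix15}); your closing remark about CM-type compatibility in part (3) is reasonable but already absorbed into the statement of Proposition~\ref{prop:CCO_effective}.
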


\begin{proof}\
\begin{enumerate}
    \item Choose $A_0 \in \AV_h(p)$ with $\mathrm{End}(A_0) = R_w$. By Propositions~\ref{prop:ord_lift} and \ref{prop:almord_lift}, this variety admits a canonical lifting $A_K$, so $\mathrm{End}(A_0) = \mathrm{End}(A_K)$. Moreover, $R_w$ is Gorenstein and satisfies $\overline{R}_w = R_w$ by \cite[Theorem 11]{CentelegheStix15}. 
    \item It follows from Corollary~\ref{cor:CMlift} that there exists an $A_0 \in \AV_h(p)$ such that $\mathcal{O}_L \subseteq \mathrm{End}(A_0)$, which has a CM-lifting $A_K$ satisfying $\mathcal{O}_L \subseteq \mathrm{End}(A_K)$. Hence, $\mathcal{O}_L = \mathrm{End}(A_0) = \mathrm{End}(A_K)$ by maximality. In addition, $\mathcal{O}_L$ is Gorenstein and satisfies $\overline{\mathcal{O}}_L = \mathcal{O}_L$.
    \item This follows directly by Part 2. and Proposition \ref{prop:CCO_effective}.
\end{enumerate}
\end{proof}

\begin{thm}\label{thm:main1}
For any $S \in \mathcal{S}_{\Phi}$, there exists a totally real unit $\alpha \in S^*$ (provided by~Proposition~\ref{prop:betteralpha}) such that for any abelian variety $B_0 \in \AV_h(p)$, and any isogeny $\mu: B_0 \to B_0^{\vee}$, the following are equivalent:
\begin{enumerate}
\item The isogeny $\mu$ is a polarization of $B_0$;
\item The element $\alpha^{-1} \cG(\mu) \in L$ is totally imaginary and $\Phi$-positive.
\end{enumerate}
Moreover, we have $\deg \mu = \# \left( \cG(B_0) / \cG(\mu) \cG(B_0^\vee) \right)$.
\end{thm}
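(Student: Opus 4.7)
The plan is to use the chosen variety in $\mathcal{S}_\Phi$ to transport the classical characterization of polarizations in characteristic zero to characteristic $p$, and then to spread the conclusion across the whole isogeny class via Lemma~\ref{lem:isogsymm}. First I would fix, by definition of $\mathcal{S}_\Phi$, a variety $A_0 \in \AV_h(p)$ with $\End(A_0) = S$ admitting a canonical lifting $A/K$ with CM-type $\Phi$. Since $S$ is Gorenstein with $S = \overline{S}$, Proposition~\ref{prop:redhomS} produces a unit $\alpha \in S^*$ realizing the reduction map $\Hom_L(A, A^\vee) \xrightarrow{\sim} \Hom_L(A_0, A_0^\vee)$ as multiplication by $\alpha$, and Proposition~\ref{prop:betteralpha} will guarantee that this $\alpha$ is totally real. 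Combined with Lemma~\ref{lem:GhomPsi} and the choices of Remark~\ref{rem:choicelambda}, this gives the key identity $\cG(\mu_0) = \alpha \cdot \mathcal{H}(\mu)$ for every $\mu \in \Hom_L(A, A^\vee)$ with reduction $\mu_0$.

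I would first establish the theorem in the special case $B_0 = A_0$. Any isogeny $\mu_0 : A_0 \to A_0^\vee$ is automatically $L$-linear (because $\End^0(A_0) = L$ is commutative and we have fixed compatible $L$-structures on $A_0$ and $A_0^\vee$), hence lifts uniquely to $\mu \in \Hom_L(A, A^\vee)$. By Lemma~\ref{lem:redpol}, $\mu_0$ is a polarization iff $\mu$ is, and by Lemma~\ref{lem:Hpol}, $\mu$ is a polarization iff $\mathcal{H}(\mu) = \alpha^{-1}\cG(\mu_0)$ is totally imaginary and $\Phi$-positive, which settles this case.

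Next I would spread to a general $B_0 \in \AV_h(p)$. I would choose any isogeny $f: A_0 \to B_0$ and, for $\mu : B_0 \to B_0^\vee$, form $f^*\mu := f^\vee \circ \mu \circ f : A_0 \to A_0^\vee$; Lemma~\ref{lem:isogsymm} then says $\mu$ is a polarization iff $f^*\mu$ is. Applying the contravariant $L$-linear functor $\cG$ together with the duality formula $\cG(f^\vee) = \overline{\cG(f)}$ from Proposition~\ref{prop:dual} yields
\[
\cG(f^*\mu) = \cG(f)\cdot \overline{\cG(f)}\cdot \cG(\mu) = \gamma\cdot \cG(\mu),
\]
where $\gamma := \cG(f)\overline{\cG(f)}$. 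For any embedding $\varphi : L \hookrightarrow \C$ one has $\varphi(\gamma) = |\varphi(\cG(f))|^2 > 0$, so $\gamma$ is totally positive and totally real. Since $\alpha$ is also totally real, multiplication by $\gamma$ preserves both being totally imaginary and being $\Phi$-positive, so $\alpha^{-1}\cG(f^*\mu) = \gamma\cdot\alpha^{-1}\cG(\mu)$ has these properties iff $\alpha^{-1}\cG(\mu)$ does. Together with the case $B_0 = A_0$, this gives the equivalence of (1) and (2).

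Finally, the degree formula $\deg\mu = \#(\cG(B_0)/\cG(\mu)\cG(B_0^\vee))$ will follow from the standard property that $\cG$ (being essentially the Centeleghe-Stix functor $\cF_w$ with a chosen embedding into $L$) sends an isogeny $\mu$ to the injection $\cG(B_0^\vee)\hookrightarrow \cG(B_0)$, $x\mapsto \cG(\mu)x$, whose cokernel has cardinality $\deg\mu$. The hard part will be the spreading step: one must carefully track how contravariance of $\cG$, the chosen $L$-linear structures on the various $\Hom$-groups, and the duality $\cG(f^\vee) = \overline{\cG(f)}$ interact to give the identity $\cG(f^*\mu) = \gamma\cG(\mu)$ with $\gamma$ totally positive real. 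Once that identity is in place, invariance of ``totally imaginary and $\Phi$-positive'' under multiplication by $\gamma$ is immediate, and the rest reduces to the reference case already handled.
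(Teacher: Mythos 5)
Your proof is correct and follows essentially the same route as the paper: canonically lift the reference variety $A_0$ with $\End(A_0)=S$, use Lemmas~\ref{lem:redpol} and~\ref{lem:Hpol} plus the relation $\cG(\mu_0)=\alpha\,\mathcal{H}(\mu)$ from Proposition~\ref{prop:redhomS} and Remark~\ref{rem:choicelambda} to settle the case $B_0=A_0$, and then spread to arbitrary $B_0$ via Lemma~\ref{lem:isogsymm}, Proposition~\ref{prop:dual}, and the total positivity of $\cG(f)\overline{\cG(f)}$. The only cosmetic difference is that you separate the ``base case'' $B_0=A_0$ from the ``spreading'' step, whereas the paper presents it in one pass through a commutative diagram; the ingredients and logical structure are identical, and the degree formula is likewise a citation of \cite[Proposition 29.2]{CentelegheStix15}.
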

\begin{proof}
Let $A_0 \in \AV_h(p)$ admitting a canonical lifting $A/K$ with $\End(A_k)=S$.
Fix an isogeny $f: A_0 \to B_0$.
The previous results yield the following commutative diagram, where $\mathrm{red}$ denotes the reduction map:
\[\xymatrix{
                                 & \Hom(A,A^\vee) \ar[d]_{\mathrm{red}} \ar[dr]^{\mathcal{H}} & \\
 \Hom(B_0,B_0^\vee) \ar[r]^{f^*}\ar[d]^{\cG} & \Hom(A_0,A_0^\vee)\ar[d]^{\cG} & (\bar I^t : I) \ar[d]_{\alpha} \\
 (\cG(B_0) : \cG(B_0^\vee))\ar[r]^{\cG(f^*)} & (\cG(A_0) : \cG(A_0^\vee)) & (\bar I^t : I)
 \ar@{=}[l]
}\]

Let $\mu:B_0\to B_0^\vee$ be an isogeny. 
By Lemma~\ref{lem:isogsymm}, the isogeny $\mu$ is a polarization of $B_0$ if and only if $f^*\mu$ is a polarization of $A_0$, which by Lemma \ref{lem:redpol} happens if and only if $\mathrm{red}^{-1}(f^*\mu)$ is a polarization of $A$. By Lemma \ref{lem:Hpol}, this is the case if and only if $$\mathcal{H}\left(\mathrm{red}^{-1}(f^*\mu)\right)=\Psi_\lambda^{-1}(\mathrm{red}^{-1}(f^*\mu))$$ 
is totally imaginary and $\Phi$-positive.
By commutativity of the diagram we see that
\[ \mathcal{H}\left(\mathrm{red}^{-1}(f^*\mu)\right) =  
\frac{1}{\alpha}\left(\cG(f^*\mu)\right). \]
By Proposition \ref{prop:dual} we have that $\cG(f^*\mu) = \cG(f)\overline{\cG(f)}\cG(\mu)$, which implies that $\alpha^{-1}(f^*\cG(\mu))$ is totally imaginary and $\Phi$-positive if and only if $\alpha^{-1}\cG(\mu)$ is so.
The degree statement follows from \cite[Proposition 29.2]{CentelegheStix15}.
\end{proof}

\begin{prop}\label{prop:isompol}
Let $B_0 \in \AV_h(p)$; let $T \subseteq L$ denote the embedded order $\mathrm{End}(B_0)$.
Let $\mu, \mu'$ be two polarizations of $B_0$. 
Then $(B_0,\mu)$ and $(B_0,\mu')$ are isomorphic (as polarized abelian varieties) if and only if there exists $v \in T^*$ such that $\cG(\mu) = v\overline{v}\cG(\mu')$.
In particular, $\mathrm{Aut}((B_0, \mu)) = \mathrm{Tors}(T^*)$.
\end{prop}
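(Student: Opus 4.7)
The strategy is to translate the definition of polarized-isomorphism through the antiequivalence $\cG$ into a statement about elements of $L$, and then invoke Kronecker's theorem for the final assertion about automorphism groups.

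First, I would unwind the definition: an isomorphism of polarized abelian varieties $(B_0, \mu) \xrightarrow{\sim} (B_0, \mu')$ is an automorphism $f \in \mathrm{Aut}(B_0)$ such that $f^{\ast} \mu' = \mu$, i.e.\ $f^{\vee}\circ \mu'\circ f = \mu$, using the notation of Lemma~\ref{lem:isogsymm}. By Remark~\ref{rem:Endideal}, the antiequivalence $\cG$ identifies $\mathrm{Aut}(B_0)$ with $T^{\ast}$, and I will write $v := \cG(f) \in T^{\ast}$.

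Next, I would apply $\cG$ to the relation $\mu = f^{\vee}\circ \mu'\circ f$. Since $\cG$ is contravariant on morphisms between abelian varieties but its image sits inside the commutative algebra $L$, the composition relation becomes multiplication: $\cG(\mu) = \cG(f)\cdot \cG(f^{\vee})\cdot \cG(\mu')$. Proposition~\ref{prop:dual} gives $\cG(f^{\vee}) = \overline{\cG(f)}$, so the relation reads $\cG(\mu) = v\,\overline{v}\,\cG(\mu')$. Conversely, given any $v \in T^{\ast}$ satisfying this identity, the antiequivalence produces a unique $f \in \mathrm{Aut}(B_0)$ with $\cG(f) = v$, and reversing the computation shows $f^{\vee}\circ \mu'\circ f = \mu$. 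This proves the first assertion.

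For the automorphism statement, set $\mu = \mu'$: since $\cG(\mu)$ is a nonzero element of $L$, we may cancel and obtain
\[
\mathrm{Aut}((B_0,\mu)) = \{\, v \in T^{\ast} : v\overline{v} = 1 \,\}.
\]
To identify this with $\mathrm{Tors}(T^{\ast})$, I use that $L$ is a CM-algebra, so its canonical involution $\overline{\cdot}$ coincides with complex conjugation under every embedding $\varphi : L \hookrightarrow \C$. Thus $v\overline{v} = 1$ is equivalent to $|\varphi(v)| = 1$ for every $\varphi$. Since $v$ is an algebraic integer, Kronecker's theorem forces $v$ to be a root of unity, i.e.\ a torsion element of $T^{\ast}$. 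The reverse inclusion is immediate: any root of unity satisfies $v\overline{v} = |v|^2 = 1$.

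\textbf{Main obstacle.} There is no deep technical difficulty; the only points requiring attention are (i) correctly tracking that $\cG$ is contravariant while its target is commutative, so the order of the factors does not affect the final identity, and (ii) confirming that the CM-involution on the product $L = \prod L_i$ of CM-fields matches complex conjugation in every embedding, which is automatic because each factor $L_i$ is a CM-field with canonical involution induced by complex conjugation.
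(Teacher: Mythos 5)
Your proof is correct and follows essentially the same route as the paper: both translate the polarized-isomorphism relation $\mu = f^{\vee}\circ\mu'\circ f$ through $\cG$, use Proposition~\ref{prop:dual} to get $\cG(f^{\vee}) = \overline{\cG(f)}$, and then characterize the automorphism group via the unit-circle condition. The only cosmetic difference is that the paper cites~\cite[Chapter~I, Proposition~7.1]{Neukirch99} where you invoke Kronecker's theorem directly, but these are the same result.
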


\begin{proof}
The existence of an isomorphism $f: (B_0,\mu) \to (B_0,\mu')$ is equivalent to the commutativity of the following diagram:
\[
\begin{tikzcd}[swap,bend angle=45]
 B_0 \rar{\mu} \dar{f} & B_0^{\vee} \\ 
 B_0 \rar{\mu'} & B_0^{\vee} \uar{f^{\vee}}
\end{tikzcd}
\]
If $\cG(f) = v \in T$ , then $\cG(f^{\vee}) = \overline{v} \in T$ by Proposition~\ref{prop:dual}, proving the first statement.

When $\cG(\mu) = \cG(\mu')$, the diagram shows that $v \overline{v} = 1$.
Since $L$ is a CM-algebra, this is the case if and only if $v \in \mathrm{Tors}(T^*)$:
We use that $\mathrm{Tors}(T^*) = \mathrm{Tors}(L^*) \cap T^*$ and study $\mathrm{Tors}(L^*)$. 
By~\cite[Chapter I, Proposition 7.1]{Neukirch99}, we have 
\[
\mathrm{Tors}(L^*) = \{ v \in \mathcal{O}_L^* : \vert \phi(v) \vert = 1 \text{ for all } \phi \in \Hom(L,\mathbb{C}) \}.
\]
Now $\vert \phi(v) \vert = 1$ if and only if $1 = \vert \phi(v) \vert^2 = \phi(v) \overline{\phi(v)} = \phi(v) \phi(\overline{v}) = \phi(v \overline{v})$; for the third equality we use that $L$ is a CM-algebra.
And $\phi(v \overline{v}) = 1$ for all $\phi$ is equivalent to $v \overline{v} = 1$.
\end{proof}

\section{Algorithmic implementations}\label{sec:algo}
In this section we describe algorithms we developed to apply the theoretical results obtained in the previous sections.
More precisely, in Subsection \ref{ssec:RRCcomp} we explain how to verify that in a suitable isogeny class there exists an abelian variety admitting a canonical lifting and, given this, in Subsection \ref{ssec:polcomp} we describe how to effectively compute the principally polarized abelian varieties. Finally, in Subsection \ref{ssec:examples} we give some illustrative examples.

The implementation of the algorithms described and the code to reproduce the examples can be found at \url{https://github.com/stmar89/PolsAbVarFpCanLift}.
As mentioned in the introduction, we computed the principal polarizations for all squarefree isog-eny classes of abelian varieties of dimensions two and three over the finite fields $\F_3$, $\F_5$ and $\F_7$, and of dimension four over $\F_2$ and $\F_3$. These computations show that the generalized residual reflex condition produces a lifting in the vast majority of cases and that in most cases we can ignore the contribution given by $\alpha$.
Tables summarizing these results can be found in the file
\href{https://github.com/stmar89/PolsAbVarFpCanLift/blob/main/additional_examples.pdf}{{\tt additional\_examples.pdf}} on GitHub.

\subsection{The generalized residual reflex condition}\label{ssec:RRCcomp}
Recall from Definition~\ref{def:padicCMtype}.\ref{def:padicCMtype:cmtype} that a CM-type $\Phi$ for us can be $\C$-valued or $\overline{\Q}_p$-valued. Since these fields are abstractly isomorphic, one can theoretically pass from one to the other; for algorithmic implementations however, making an identification $\overline{\Q}_p \simeq \C$ is not straightforward.

Note that Section~\ref{sec:lifting} requires the usage of $p$-adic CM-types, since the generalized residual reflex condition of Definition~\ref{def:RRC} 
is a $p$-adic condition, while in Section~\ref{sec:pols} we need to work with complex CM-types in order to evaluate the conditions in Theorem~\ref{thm:main1}.2.

In this section, we will explain how to implement an algorithm that determines, given a squarefree characteristic polynomial of Frobenius $h$ and any $\C$-valued CM-type $\Phi$ for $L = \Q[x]/(h)$, an associated $\bar{\Q}_p$-valued CM-type on which we can check whether the generalized residual reflex condition is satisfied.

First we construct splitting fields $M/\Q$ and $N/\Q_p$ for $h$ and fix embeddings $\varphi_0\colon M \hookrightarrow \mathbb{C}$ and $N \hookrightarrow \overline{\mathbb{Q}}_p$.
Observe that there exists an embedding $j': M \to N$, 
which extends to an isomorphism $j: \mathbb{C} \to \overline{\mathbb{Q}}_p$.
Factor the polynomial $h=h_1\cdot \ldots \cdot h_t$ into irreducible factors and write $(L,\Phi)=(L_1\times\ldots\times L_t,\Phi_1\times\ldots\times \Phi_t)$, where $L_i=\Q[x]/(h_i) = \Q(\pi_i)$ and $\Phi_i$ is the induced CM-type on $L_i$.
Denote by $\Pi_i = \{\pi_{i,1},\ldots,\pi_{i,{2g_i}}\}$ the roots of $h_i$ in $M$, where $2g_i = \deg(h_i)$.
The embeddings $j'$ and $\varphi_0$ allow us to identify $\Phi_i$ with a subset $\Pi_{\Phi_i}$ of $\Pi_i$, say, after possibly relabelling, $\Pi_{\Phi_i} = \{\pi_{i,1},\ldots,\pi_{i,g_i} \}$.
It follows from Definition~\ref{def:padicCMtype} that the reflex field associated to $\Phi_i$ is equal to
\begin{equation*}
E_i=\Q\left( \pi_{i,1}^n + \ldots + \pi_{i,g_i}^n : 0\leq n \leq g_i-1 \right),
\end{equation*}
and the compositum $E=E_1\cdot\ldots \cdot E_t$ equals
\begin{equation}\label{eq:E}
E=\Q\left( \pi_{i,1}^{n_i} + \ldots + \pi_{i,g_i}^{n_i} :0\leq {n_i} \leq g_i-1,\  0\leq i \leq t \right) \subseteq M. 
\end{equation}
Note that $j'$ induces a $p$-adic valuation $\nu$ on $E$.
The image $j'(E)$ does not contain $\mathbb{Q}_p$, so we consider its completion $\widehat{j'(E)}$ 
with respect to $\nu$.
By construction, the residue field of $E$ at $\nu$ is isomorphic to that of ${\widehat{j'(E)}}$. 
The maps fit together in the following commutative diagram:
\begin{equation}\label{eq:diagCQp}
\begin{tikzcd}[swap,bend angle=45]
\mathbb{C} \ar[rr,"j"] & & \overline{\mathbb{Q}}_p \\
M \uar{}{\varphi_0} \ar[rr,"j'"] & &  N \uar{} \\
E \uar{} \rar{} & j'(E) \rar{} & {\widehat{j'(E)}} \uar{} \\
\mathbb{Q} \uar{} \ar[rr] & & \mathbb{Q}_p \uar{}
\end{tikzcd}
\end{equation}

To check whether the residue field of $E$ at $\nu$ is a subfield of $\mathbb{F}_q$, as needed for Condition~\ref{def:RRC_item_refl} of Definition~\ref{def:RRC}, we can work inside $\mathbb{C}$ (i.e., using the left vertical maps in Diagram~\eqref{eq:diagCQp}). This would require the following expensive computations:
\begin{enumerate}
    \item An explicit factorization of $p\in \mathbb{Q}$ in $M$ in order to compute the prime ideal $\mathfrak{p} \subseteq M$ corresponding to $\nu$;
    \item A presentation of $E$ as a subfield of $M$, as in \eqref{eq:E};
    \item An integral basis of $E$ in order to calculate $\mathcal{O}_{E}/(\mathfrak{p}\cap E)$.
 \end{enumerate}
The upshot of Diagram~\eqref{eq:diagCQp} however is that, given the generators of $E$ as in~\eqref{eq:E}, we can instead work inside $\overline{\mathbb{Q}}_p$ (i.e., using the right vertical maps in Diagram~\eqref{eq:diagCQp}).
Computations 1.-3. become faster in $p$-adic fields.

To summarize, using this $p$-adic approach, our algorithm for checking Condition \ref{def:RRC_item_refl}~of Definition~\ref{def:RRC} works as follows:
\begin{enumerate}
\item Compute a presentation $M = \mathbb{Q}[x]/(f)$;
\item Construct the embeddings $j': M \to N$ and $\varphi_0: M \to \mathbb{C}$ by computing a root of $f$ in $N$ and $\mathbb{C}$, respectively; 
\item \label{alg:rrc_refl_3} Construct ${\widehat{j'(E)}}$ as the extension of $\mathbb{Q}_p$ generated by the images of the generators of $E$ under $j'$;
\item \label{alg:rrc_refl_4} Compute the residue field of ${\widehat{j'(E)}}$;
\item \label{alg:rrc_refl_5} Check whether this residue field is contained in $\mathbb{F}_q$.
\end{enumerate}
Observe that if the residue field of $N$ is contained in $\mathbb{F}_q$, then the same will be true for the residue field of any subfield of $N$. 
In particular, in this situation, we can skip Steps~\ref{alg:rrc_refl_3},~\ref{alg:rrc_refl_4}, and~\ref{alg:rrc_refl_5} of the algorithm above.

Next, we check Condition \ref{def:RRC_item_st} of Definition~\ref{def:RRC}.
Recall that we need to have that the Shimura-Taniyama formula
\begin{equation}\label{eq:ST2}
\dfrac{\ord_\nu(\pi)}{\ord_\nu(q)}=\dfrac{\#\set{ \vphi \in \Phi \text{ s.t.~} \vphi \text{ induces } \nu }}{[L_\nu:\Q_p]}
\end{equation} 
holds for every place $\nu$ of $L$ above $p$.
There exists a bijection between the primes $\mathfrak{p}$ of $L$ above $p$ and the irreducible factors of $h$ over $\mathbb{Q}_p$, cf.~\cite[II.8.2]{Neukirch99}.
Explicitely, we associate to every prime $\mathfrak{p}$ of L above $p$ the minimal polynomial $h_{\mathfrak{p}}$ over $\mathbb{Q}_p$ of the image of $\pi$ in the completion $L_{\mathfrak{p}}$.
Denote by $\Pi$ the union of the sets $\Pi_i$, and by $\Pi_{\Phi}$ the union of the sets $\Pi_{\Phi_i}$.
Checking whether~\eqref{eq:ST2} holds for all $\nu$ above $p$ is now equivalent to checking whether
\begin{equation}\label{eq:ST3}
\dfrac{\ord_{\mathfrak{p}}(\pi)}{\ord_{\mathfrak{p}}(q)}=\dfrac{\#\set{ \pi_i \in \Pi_{\Phi} \text{ s.t.~} h_{\mathfrak{p}}(j'(\pi_i)) = 0 }}{[L_{\mathfrak{p}} :\Q_p]},
\end{equation}
holds for primes $\mathfrak{p}$ of $L$ above $p$.
Note that 
\[
[L_{\mathfrak{p}} :\Q_p] = \#\set{ \pi_i \in \Pi \text{ s.t.~} h_{\mathfrak{p}}(j'(\pi_i)) = 0 } = \deg(h_{\mathfrak{p}}).
\]

Hence, our algorithm for checking Condition \ref{def:RRC_item_st}~of~Definition~\ref{def:RRC} checks whether Equation~\eqref{eq:ST3} is satisfied for every prime $\mathfrak{p}$ of $L$ above $p$.

\begin{remark}\label{rmk:CMtotimg}
A $\C$-valued CM-type $\Phi$ for $L = \mathbb{Q}[\pi]$ is determined by a totally imaginary element $b\in L$.
Indeed, using $\varphi_0$ as in Diagram \eqref{eq:diagCQp}, we can build a bijection between homomorphisms $\vphi_i\colon L \to \mathbb{C}$ and the $2g$ roots of $h$ in $M$ through $\vphi_i(\pi) = \pi_{i}$. 
Then a CM-type $\Phi$ determines a totally imaginary element $b=\sum_k b_k \pi^k$ with $b_k\in \Q$ via the following rule:
\begin{align*}
\vphi_i \in \Phi & \iff  \Im(\vphi_i(b))>0 \\ 
& \iff \sum_{k=0}^{2g-1}b_k\Im(\vphi_i(\pi^k))>0 \\
& \iff \sum_{k=0}^{2g-1}b_k\Im(\pi_i^k)>0.
\end{align*}
This element $b$ is unique up to a totally positive element of $L$.
Conversely, every totally imaginary element determines a unique CM-type of $L$.
\end{remark}

\subsection{Explicit computations of principal polarizations}\label{ssec:polcomp}
We will use the results of the previous sections to compute all principal polarizations of an arbitrary abelian variety in a fixed isogeny class determined by a squarefree Weil polynomial $h$. 
Set $L = \mathbb{Q}[x]/(h)$ and denote by $L_\R$ the unique totally real subalgebra of $L$. Moreover let $L^{+}$ denote the of totally positive elements of $L_\R$.
Let $b$ be a totally imaginary element of~$L$ and let $\Phi_b$ be the associated $\C$-valued CM-type of $L$, as in Remark \ref{rmk:CMtotimg}.
Suppose that the set~$\mathcal{S}_{\Phi_b}$, cf.~Definition~\ref{def:S}, is non-empty.
Fix $S \in \mathcal{S}_{\Phi_b}$ and let $\alpha \in S^*$ be the totally real element appearing in Proposition~\ref{prop:betteralpha}.
Define $S^*_\R=S^*\cap L_\R$ to be the subgroup of $S^*$ consisting of totally real elements.
It follows from Remark \ref{rmk:CMtotimg} that for $u\in S^*_\R$, the association 
\[(u,\Phi_b)\mapsto \Phi_{ub}\]
induces a free action of the finite group
\[ G_S := \frac{S^*_\R}{S^* \cap L^+} \]
on the set of CM-types of $L$.
Denote the orbit of $\Phi_b$ by
\[ \Phi_b\cdot G_S=\set{ \Phi_{ub} : u \in S^*_\R } .\]

Now let $B_0$ be an arbitrary abelian variety in $\AV_h(p)$; put $\cG(B_0)=I$ and $T=(I:I)$.
Denote by $\mathcal{T}$ a transversal of the finite group $T^*/\langle v\overline{v} : v \in T^* \rangle$.
A necessary condition for $B_0$ being principally polarized is the existence of an isomorphism $B_0\simeq B_0^\vee$, or equivalently, the existence of an element $i_0\in L^*$ such that $i_0 \overline{I}^t = I$.
Assume now that such an $i_0$ exists. 
For an arbitrary $\C$-valued CM-type $\Phi$ of $L$, we define
\begin{equation}\label{eq:Palpha}
\Palpha{\Phi}{I}:=\{ i_0  u  : u \in \mathcal{T} \text{ such that } \alpha^{-1} i_0 u \text{ is totally imaginary and } \Phi\text{-positive } \}
\end{equation}
and
\begin{equation}\label{eq:Pone}
\Pone{\Phi}{I}:=\{ i_0  u  : u \in \mathcal{T} \text{ such that } i_0 u \text{ is totally imaginary and } \Phi\text{-positive} \}.
\end{equation}
If $B_0$ is not isomorphic to $B_0^\vee$, that is, if $i_0$ as above does not exist, then we formally set $\Palpha{\Phi}{I}=\Pone{\Phi}{I}=\emptyset$, for all $\Phi$.
Observe that if we replace $\mathcal{T}$ with another transversal in \eqref{eq:Palpha} (resp.~\eqref{eq:Pone}), we will obtain a set which is in bijection with $\Palpha{\Phi}{I}$ (resp.~$\Pone{\Phi}{I}$).
The same holds true if we replace the element $i_0$ with another element $i'_0$ satisfying $i'_0 \overline{I}^t = I$, since we would then have $i_0=ui'_0$ for some $u\in T^*$.

\begin{prop}\label{prop:comppol}
For any choice of a transversal $\mathcal{T}$,
the set~$\Palpha{\Phi}{I}$ is the image under $\cG$ of a complete set of representatives of principal polarizations of $B_0$ up to isomorphism.
Moreover, if $\Palpha{\Phi}{I}$ is non-empty, then it contains precisely $\vert T^*_\R/\langle v\overline{v} : v \in T^* \rangle \vert $ elements, where $T^*_\R = T^* \cap L_\R$.
\end{prop}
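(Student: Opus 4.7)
The plan is to use $\cG$ to translate the data of a principal polarization of $B_0$ into a statement about elements of $L$, apply Theorem~\ref{thm:main1} and Proposition~\ref{prop:isompol} to pin down the polarization and isomorphism conditions, and then count.

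First, I would interpret principality. By the degree formula in Theorem~\ref{thm:main1}, an isogeny $\mu:B_0\to B_0^\vee$ has degree one iff $\cG(\mu)\,\overline{I}^t = I$ (using $\cG(B_0^\vee)=\overline{I}^t$ from Remark~\ref{rem:choicelambda}). If no $i_0\in L^*$ with $i_0\overline{I}^t = I$ exists, then $B_0\not\simeq B_0^\vee$ and there are no principal polarizations, consistently with the convention $\Palpha{\Phi}{I}=\emptyset$. Otherwise, since $(\overline{I}^t:\overline{I}^t)=T$, the solutions of $\lambda\,\overline{I}^t = I$ form the coset $i_0 T^*$, so $\cG$ identifies isogenies $B_0\to B_0^\vee$ of degree one with $\{i_0 u : u\in T^*\}$.

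Next, I would apply Theorem~\ref{thm:main1}(2) to restrict to polarizations, namely those $u\in T^*$ for which $\alpha^{-1}i_0 u$ is totally imaginary and $\Phi$-positive. Proposition~\ref{prop:isompol} then says $(B_0,\mu)\simeq (B_0,\mu')$ iff $\cG(\mu)=v\overline{v}\,\cG(\mu')$ for some $v\in T^*$, i.e.\ iff the corresponding $u$'s agree modulo $N:=\langle v\overline{v}:v\in T^*\rangle$. Before combining these, I would check that the two conditions on $\alpha^{-1}i_0 u$ depend only on the class $[u]\in T^*/N$: any $v\overline{v}$ is totally real (equal to its own conjugate) and totally positive ($\vphi(v\overline{v})=|\vphi(v)|^2>0$ under every $\vphi:L\to\C$), hence multiplying by it preserves both ``totally imaginary'' and ``$\Phi$-positive''. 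Taking $\mathcal{T}$ to be a transversal of $T^*/N$ then exhibits $\Palpha{\Phi}{I}$ as a complete set of representatives for the principal polarizations of $B_0$ up to isomorphism.

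For the cardinality, assume $\Palpha{\Phi}{I}\neq\emptyset$ and fix an admissible $u_0$. Writing $w=u/u_0$ for any other $u\in T^*$, and using $\overline{\alpha}=\alpha$ from Proposition~\ref{prop:betteralpha}, a direct check shows that $\alpha^{-1}i_0 u$ is totally imaginary iff $\overline{w}=w$, i.e.\ $w\in T^*_\R$; and, given this, that $\Phi$-positivity reduces to $\vphi(w)>0$ for every $\vphi\in\Phi$. Since $\{\vphi|_{L_\R}:\vphi\in\Phi\}$ sweeps out every real embedding of $L_\R$, this forces $w$ to be totally positive. Thus the admissible $u$'s form the single coset $u_0\cdot(T^*_\R\cap L^+)$, and the number of classes modulo $N$ (which is contained in $T^*_\R\cap L^+$) is $|(T^*_\R\cap L^+)/N|$. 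The last step I expect to require care is matching this with $|T^*_\R/N|$: I would compare the image of the sign map $T^*_\R\to\{\pm 1\}^{r}$ (where $r$ is the number of real places of $L_\R$) with its restriction to $N$, exploiting the structure of units in the CM-algebra $L$ to verify that the two quotients have the same cardinality. This sign-analysis is the main technical step of the count.
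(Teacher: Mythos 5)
Your argument for the first part of the proposition is correct and follows essentially the paper's route: the degree formula in Theorem~\ref{thm:main1} together with $\cG(B_0^\vee)=\overline{I}^t$ identifies degree-one isogenies with the coset $i_0T^*$ (and your auxiliary observation that $(\overline{I}^t:\overline{I}^t)=T$ once $i_0$ exists, via $\overline{I}^t=i_0^{-1}I$, fills in a step the paper leaves implicit); Theorem~\ref{thm:main1}(2) imposes the polarization condition, Proposition~\ref{prop:isompol} gives the isomorphism relation, and both conditions descend to $T^*/N$ with $N=\langle v\overline v:v\in T^*\rangle$ because each $v\overline v$ is totally real and totally positive. The paper states the converse direction by explicitly producing $u_0\in\mathcal{T}$ and $v$ with $u_0=uv\overline v$; the content is the same.

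The gap is in your final step. You correctly show that the admissible $u$'s form a single coset of $T^*_\R\cap L^+$, so that $\vert\Palpha{\Phi}{I}\vert=\vert(T^*_\R\cap L^+)/N\vert$. The ``sign analysis'' you then propose to match this with $\vert T^*_\R/N\vert$ cannot succeed. Since every $v\overline v$ is totally positive, $N\subseteq T^*_\R\cap L^+$, so
\[
\vert T^*_\R/N\vert=[T^*_\R:T^*_\R\cap L^+]\cdot\vert(T^*_\R\cap L^+)/N\vert,
\]
and the index $[T^*_\R:T^*_\R\cap L^+]$ is at least $2$ because $-1\in T^*_\R$ is never totally positive and never lies in $N$. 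The sign map $T^*_\R\to\{\pm1\}^r$ that you propose would precisely \emph{exhibit} this discrepancy rather than close it. (Concretely, for $T=\Z[i]$ one has $T^*_\R=\{\pm1\}$, $N=\{1\}$, so $\vert T^*_\R/N\vert=2$, yet an elliptic curve with CM by $\Z[i]$ has a single principal polarization up to polarized isomorphism, matching $\vert(T^*_\R\cap L^+)/N\vert=1$.) Note that the paper's own justification of the cardinality is the one-line remark that $\lambda/\lambda'\in T^*_\R$, which only gives an \emph{injection} of $\Palpha{\Phi}{I}$ into $T^*_\R/N$; your computation actually identifies the image as $(T^*_\R\cap L^+)/N$, which is the correct count and the place to stop. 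You should simply drop the proposed ``matching'' step.
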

\begin{proof}
By Theorem \ref{thm:main1} and Proposition~\ref{prop:isompol}, the elements of $\Palpha{\Phi}{I}$ are images under $\cG$ of pairwise non-isomorphic principal polarization of $B_0$.
Conversely, let $\mu$ be a principal polarization of $B_0$ and put $\lambda = \cG(\mu)$.
Then $\lambda i_0^{-1} = u$ is in $T^*$.
There exist a unique element $u_0 \in \mathcal{T}$ and an element $v \in T^*$ such that $u_0=uv\overline{v}$.
Now $\lambda'=v\overline{v}\lambda = i_0 u_0 \in \Palpha{\Phi}{I}$ represents a principal polarization of $B_0$, which is isomorphic to $\lambda$ by Proposition~\ref{prop:isompol}.
The last statement follows from the fact that if $\lambda$ and $\lambda'$ are in $\Palpha{\Phi}{I}$ then $\lambda/\lambda' \in T^*_\R$.
\end{proof}

The obstruction that prevents Proposition \ref{prop:comppol} from being effective is that the set $\Palpha{\Phi}{I}$ depends on the element $\alpha$, which, a priori, can be any totally real element of $S^*$.
In the rest of this section we will explain when and how we can compute a set in bijection with $\Palpha{\Phi}{I}$ without any further knowledge about $\alpha$.

\begin{lemma}\label{lem:PtoP'alpha}
Let $\Phi_b$ be a CM-type defined by a totally imaginary element $b\in L$. 
Then
\[ \Palpha{\Phi_b}{I} = \Pone{\Phi_{\alpha b}}{I}. \]
\end{lemma}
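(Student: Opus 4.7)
The plan is to compare the two sets element by element, since both are parametrized by the same transversal $\mathcal{T}$ and the same prefactor $i_0$. So it suffices to show that for a fixed $u \in \mathcal{T}$, the condition
\[ \alpha^{-1} i_0 u \text{ is totally imaginary and } \Phi_b\text{-positive} \]
is equivalent to
\[ i_0 u \text{ is totally imaginary and } \Phi_{\alpha b}\text{-positive}. \]
Setting $y = i_0 u$ reduces the lemma to purely algebraic statement about $y$, and the key input I will use is that $\alpha \in S^*$ is \emph{totally real} by Proposition~\ref{prop:betteralpha}.

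For the ``totally imaginary'' half, since $\alpha^{-1}$ is totally real it commutes with the involution: $\overline{\alpha^{-1} y} = \alpha^{-1} \overline{y}$. Hence $\alpha^{-1} y = -\overline{\alpha^{-1} y}$ if and only if $y = -\overline{y}$, so the totally imaginary conditions match. This is essentially immediate.

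The substantive step is to match the positivity conditions. For each embedding $\varphi$ of $L$ into $\mathbb{C}$, the number $\varphi(\alpha)$ is a nonzero real; write $\epsilon_\varphi := \mathrm{sign}(\varphi(\alpha)) \in \{\pm 1\}$. Because $b$ is totally imaginary, $\Im(\varphi(\alpha b)) = \varphi(\alpha)\Im(\varphi(b))$, so passing from $\Phi_b$ to $\Phi_{\alpha b}$ on a conjugate pair $\{\varphi, \overline{\varphi}\}$ keeps the same representative when $\epsilon_\varphi = +1$ and swaps the representative when $\epsilon_\varphi = -1$ (note $\epsilon_\varphi = \epsilon_{\overline{\varphi}}$, again since $\alpha$ is totally real). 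Similarly, $\Im(\varphi(\alpha^{-1} y)) = \varphi(\alpha)^{-1} \Im(\varphi(y))$, so $\Phi_b$-positivity of $\alpha^{-1} y$ translates into the requirement $\epsilon_\varphi \Im(\varphi(y)) > 0$ for every $\varphi \in \Phi_b$. Splitting $\Phi_b$ according to the sign $\epsilon_\varphi$, this requirement is exactly $\Im(\varphi'(y)) > 0$ for every $\varphi' \in \Phi_{\alpha b}$, which is $\Phi_{\alpha b}$-positivity of $y$.

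The only subtle point, and the main bookkeeping obstacle, is keeping track of how $\Phi_b$ and $\Phi_{\alpha b}$ interact with complex conjugation on each conjugate pair; once one observes that they coincide on pairs where $\epsilon_\varphi = +1$ and swap on pairs where $\epsilon_\varphi = -1$, the equivalence of the two positivity conditions follows by a direct case analysis as above. Combining with the totally imaginary part, $\Palpha{\Phi_b}{I}$ and $\Pone{\Phi_{\alpha b}}{I}$ consist of the same elements $i_0 u$ for the same subset of $u \in \mathcal{T}$.
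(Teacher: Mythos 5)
Your proof is correct. The only substantive difference from the paper's argument is in how the positivity conditions are compared. The paper invokes the algebraic characterization from Remark~\ref{rmk:CMtotimg} --- for totally imaginary $y$, ``$y$ is $\Phi_b$-positive'' is equivalent to ``$y/b \in L^+$'' --- so the entire positivity step becomes the one-line identity $\alpha^{-1} i_0 u / b = i_0 u / (\alpha b)$, and the equality of the two sets is immediate. You instead unwind the definition embedding by embedding: you introduce signs $\epsilon_\varphi = \mathrm{sign}(\varphi(\alpha))$, use that $\alpha$ totally real implies $\epsilon_\varphi = \epsilon_{\bar\varphi}$ and $\Im(\varphi(\alpha^{-1}y)) = \varphi(\alpha)^{-1}\Im(\varphi(y))$, and carry out a case analysis on whether $\Phi_b$ and $\Phi_{\alpha b}$ keep or swap the representative in each conjugate pair. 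This is a more elementary, hands-on version of the same mechanism; it is longer and requires a bit of bookkeeping, but it also makes transparent exactly which embeddings get swapped and why. The paper's route is cleaner because it factors that bookkeeping through the totally-positive cone $L^+$, which is used repeatedly in Subsection~\ref{ssec:polcomp} anyway; for a self-contained verification, yours is the more explicit choice and is correct as written.
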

\begin{proof}
Let $i_0u \in \Palpha{\Phi_b}{I}$.
Since the element $\alpha$ is totally real, the element $\alpha^{-1} i_0 u$ is totally imaginary if and only if $i_0 u$ is. Moreover,
\[ \alpha^{-1} i_0 u \text{ is } \Phi_b\text{-positive } \iff \frac{\alpha^{-1} i_0u}{b}\in L^+ \iff  i_0 u \text{ is } \Phi_{\alpha b}\text{-positive }, \]
which concludes the proof.
\end{proof}

\begin{lemma} \label{lem:sameoutputs}
Let $\Phi_b$ be a CM-type of $L$ and let $c\in S^*_\R \cap T^*_\R$.
Put $d:=cb$, so that 
$\Phi_d\in \Phi_b G_S$.
Then there exists a bijection
\[ \Pone{\Phi_{b}}{I} \longleftrightarrow \Pone{\Phi_{d}}{I}. \]
\end{lemma}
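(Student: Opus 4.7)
The plan is to construct an explicit bijection via multiplication by $c$, adjusted to land back in the chosen transversal $\mathcal{T}$.

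First, I would rephrase the defining condition of $\Pone{\Phi_b}{I}$. Since $b$ is totally imaginary, an element $\lambda \in L$ is totally imaginary and $\Phi_b$-positive if and only if $\lambda/b \in L^+$; analogously, $\Phi_d$-positivity becomes the condition $\lambda/(cb)\in L^+$. So the desired bijection has to convert ``$(i_0 u)/b \in L^+$'' into the analogous statement with $cb$ in the denominator.

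The natural candidate is ``multiply by $c$''. Two properties make this work: since $c \in T^*_\R$ is totally real, multiplication by $c$ preserves total imaginarity; and since $c \in T^*$, for any $u \in T^*$ the product $cu$ is again a unit of $T$. I would then define the map
\begin{equation*}
\Pone{\Phi_b}{I} \longrightarrow \Pone{\Phi_d}{I}, \qquad i_0 u \longmapsto i_0 u',
\end{equation*}
where $u' \in \mathcal{T}$ is the unique representative of the class of $cu$ in $T^*/\langle v\bar{v} : v \in T^*\rangle$. The inverse map is given by the analogous construction with $c^{-1} \in T^*_\R$ in place of $c$.

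The main point to verify --- less a genuine obstacle than a routine check --- is that replacing $cu$ by its transversal representative $u'$ preserves the positivity condition. Writing $cu = u' \cdot v\bar{v}$ for some $v \in T^*$, one computes
\begin{equation*}
\frac{i_0 u'}{cb} \;=\; \frac{c \cdot i_0 u}{(v\bar{v}) \cdot cb} \;=\; \frac{i_0 u/b}{v\bar{v}},
\end{equation*}
which lies in $L^+$ because $i_0 u/b \in L^+$ by hypothesis and the ``norm'' $v\bar{v}$ is totally positive (it is fixed by the CM-involution and each archimedean embedding sends it to $|\varphi(v)|^2 > 0$). Being totally imaginary is likewise preserved because both $c$ and $v\bar{v}$ are fixed by the CM-involution. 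Therefore $i_0 u' \in \Pone{\Phi_d}{I}$, and the constructions with $c$ and $c^{-1}$ are manifestly mutually inverse, since applying them in succession replaces $u$ by an element of the form $u/(v\bar{v}\cdot w\bar{w})$, which lies in the same class as $u$ and hence coincides with $u$ by uniqueness of transversal representatives.
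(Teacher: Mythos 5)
Your proof is correct and follows essentially the same route as the paper: both arguments exploit that $c$ is a totally real unit of $T$, so multiplication by $c$ preserves total imaginarity and converts $\Phi_b$-positivity into $\Phi_d$-positivity, and both then absorb the discrepancy between $c\mathcal{T}$ and $\mathcal{T}$ by using that these are two transversals of the same quotient. The paper phrases the last step abstractly (appealing to its earlier remark that changing the transversal changes $\Pone{\Phi}{I}$ only up to bijection), while you make the bijection $u \mapsto u'$ explicit and check directly that dividing by the totally positive norm $v\bar v$ is harmless; this is a cosmetic rather than substantive difference.
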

\begin{proof}
Pick $i_0u \in \Pone{\Phi_{b}}{I}$. Observe that
\[ i_0 u \text{ is } \Phi_b\text{-positive } \iff \frac{i_0u}{b}=\frac{i_0uc}{d}\in L^+ \iff  i_0 uc \text{ is } \Phi_{d}\text{-positive }. \]
Hence, we have
\begin{equation}
\label{lem:sameoutputs:eq:2}
 \Pone{\Phi_{d}}{I} = \set{ i_0u : u \in c\mathcal{T} \text{ such that } i_0u \text{ is totally imaginary and } \Phi_d\text{-positive} }.     
\end{equation}
Since $c\in T^*$ by assumption, the set $c\mathcal{T}$ is another transversal of $T^*/\langle v \overline{v} : v \in T^* \rangle$.
Hence, we have a bijection 
\[ \set{ i_0u : u \in c\mathcal{T} \text{ such that } i_0u \text{ is totally imaginary and } \Phi_d\text{-positive} } \longleftrightarrow \Pone{\Phi_{b}}{I}, \]
which together with Equation \eqref{lem:sameoutputs:eq:2} yields the desired bijection. 
\end{proof}

\begin{thm}
\label{thm:eff}
Let~$\Phi_b$ be a CM-type for~$L$ for which~$\mathcal{S}_{\Phi_b}$ (cf.~Definition \ref{def:S}) is non-empty.
Let~$S$ be an order in~$\mathcal{S}_{\Phi_b}$ with group of totally real units~$S^*_\R$.
Let~$\alpha\in S^*_\R$ be the element representing the reduction map, cf.~Proposition \ref{prop:betteralpha}.
Let~$L^+$ be the set of totally positive elements of $L$.
Write~$G_S$ for the quotient~$S^*_\R/(S^* \cap L^+)$ which acts on the set of CM-types for $L$ and let~$\Phi_b\cdot G_S$ be the orbit of $\Phi_b$. Let $I$ be a fractional ideal with multiplicator ring $T=(I:I)$.
Denote a transversal of the quotient $T^*/\langle v \overline{v} : v \in T^* \rangle$ by $\mathcal{T}$.
Finally, for a CM-type $\Phi$ for $L$, let~$\Palpha{\Phi}{I}$ and~$\Pone{\Phi}{I}$ be as in Equations~\eqref{eq:Palpha} and~\eqref{eq:Pone}.

Consider the following statements:
\begin{enumerate}
    \item \label{thm:eff:1} There is an inclusion $S^*_\R \subseteq T^*_\R$.
    \item \label{thm:eff:2} For every $\xi \in S^*_\R$ we have $\xi L^+ \cap \mathcal{T} \neq \emptyset$.
    \item \label{thm:eff:4} There exists a bijection $\Palpha{\Phi_b}{I} \leftrightarrow \Pone{\Phi_b}{I}$.
    \item \label{thm:eff:5} For every $\Phi_d \in \Phi_b G_S$ there exists a bijection $\Pone{\Phi_b}{I} \leftrightarrow \Pone{\Phi_d}{I}$.
    \item \label{thm:eff:6} For every $\Phi_d \in \Phi_b G_S$ there exists a bijection $\Palpha{\Phi_b}{I} \leftrightarrow \Pone{\Phi_d}{I}$.
\end{enumerate}
We have the following implications:
\[
\xymatrix{
\ref{thm:eff:1} \ar@{=>}[r]\ar@{=>}[dr] & \ref{thm:eff:2} \ar@{=>}[r] & \ref{thm:eff:4} \\
    & \ref{thm:eff:5} \ar@{=>}[r] & \ref{thm:eff:6} \ar@{=>}[u]
}
\]
\end{thm}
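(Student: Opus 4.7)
The plan is to establish the five implications shown in the diagram: $(\ref{thm:eff:1}) \Rightarrow (\ref{thm:eff:2})$, $(\ref{thm:eff:1}) \Rightarrow (\ref{thm:eff:5})$, $(\ref{thm:eff:2}) \Rightarrow (\ref{thm:eff:4})$, $(\ref{thm:eff:5}) \Rightarrow (\ref{thm:eff:6})$, and $(\ref{thm:eff:6}) \Rightarrow (\ref{thm:eff:4})$. I will use throughout the identification $\Palpha{\Phi_b}{I} = \Pone{\Phi_{\alpha b}}{I}$ from Lemma \ref{lem:PtoP'alpha}, together with the fact that, whenever the sets in question are nonempty, one has $T = \overline{T}$ (so the group $\langle v\bar v : v \in T^* \rangle$ makes sense).

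Two implications are essentially formal. For $(\ref{thm:eff:5}) \Rightarrow (\ref{thm:eff:6})$: since $\alpha \in S_\R^*$, the type $\Phi_{\alpha b}$ lies in the orbit $\Phi_b \cdot G_S$, so applying $(\ref{thm:eff:5})$ with $\Phi_d = \Phi_{\alpha b}$ gives $\Pone{\Phi_b}{I} \leftrightarrow \Pone{\Phi_{\alpha b}}{I} = \Palpha{\Phi_b}{I}$; composing with another instance of $(\ref{thm:eff:5})$ for arbitrary $\Phi_d$ in the orbit yields $(\ref{thm:eff:6})$. For $(\ref{thm:eff:6}) \Rightarrow (\ref{thm:eff:4})$: trivially $\Phi_b = \Phi_{1 \cdot b}$ belongs to its own orbit, so specializing $(\ref{thm:eff:6})$ to $\Phi_d = \Phi_b$ gives $(\ref{thm:eff:4})$ directly.

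For $(\ref{thm:eff:1}) \Rightarrow (\ref{thm:eff:5})$, given $\Phi_d = \Phi_{cb} \in \Phi_b \cdot G_S$ with $c \in S_\R^*$, the assumption $S_\R^* \subseteq T_\R^*$ places $c$ in $S_\R^* \cap T_\R^*$, and Lemma \ref{lem:sameoutputs} supplies the bijection. For $(\ref{thm:eff:1}) \Rightarrow (\ref{thm:eff:2})$: given $\xi \in S_\R^*$, hypothesis $(\ref{thm:eff:1})$ places $\xi$ in $T^*$, so there exist $u_0 \in \mathcal{T}$ and $v \in T^*$ with $\xi = u_0 v\bar{v}$. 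Since $L$ is a CM-algebra, $\varphi(v\bar v) = |\varphi(v)|^2 > 0$ for every complex embedding $\varphi$, hence $v\bar v \in L^+$ and so is $(v\bar v)^{-1}$. Therefore $u_0 = \xi (v\bar v)^{-1}$ lies in $\xi L^+ \cap \mathcal{T}$.

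The main obstacle is $(\ref{thm:eff:2}) \Rightarrow (\ref{thm:eff:4})$. Specializing the hypothesis to $\xi = \alpha \in S_\R^*$ yields $u_\alpha \in \mathcal{T}$ and $p \in L^+$ with $u_\alpha = \alpha p$; as a product of two totally real elements $u_\alpha$ lies in $T^* \cap L_\R = T_\R^*$, but a priori not in $S_\R^*$, which is precisely why Lemma \ref{lem:sameoutputs} cannot be cited as stated. Since $\alpha$ and $u_\alpha$ differ by the totally positive element $p$, they induce the same CM-type, so $\Phi_{\alpha b} = \Phi_{u_\alpha b}$, and Lemma \ref{lem:PtoP'alpha} rewrites the goal as producing a bijection $\Pone{\Phi_{u_\alpha b}}{I} \leftrightarrow \Pone{\Phi_b}{I}$. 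Inspection of the proof of Lemma \ref{lem:sameoutputs} shows that the hypothesis $c \in S_\R^*$ is used only to place $\Phi_{cb}$ in the orbit $\Phi_b \cdot G_S$: the bijection itself requires only (i) $c \in T^*$, so that $c \mathcal{T}$ is again a transversal of $T^*/\langle v\bar v : v \in T^* \rangle$, and (ii) $c$ totally real, so that multiplication by $c$ preserves ``totally imaginary'' and the algebraic identity $i_0 u / b = i_0 u c / (cb)$ transports $\Phi$-positivity. Applying this mild extension of Lemma \ref{lem:sameoutputs} with $c = u_\alpha \in T_\R^*$ produces the desired bijection and completes the proof.
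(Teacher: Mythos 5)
Your proof is correct and takes essentially the same route as the paper: the formal implications and the applications of Lemmas~\ref{lem:PtoP'alpha} and~\ref{lem:sameoutputs} match exactly, and your expansion of the ``clear'' implication $(\ref{thm:eff:1}) \Rightarrow (\ref{thm:eff:2})$ is right. The only cosmetic difference is in $(\ref{thm:eff:2}) \Rightarrow (\ref{thm:eff:4})$, where the paper specializes $(\ref{thm:eff:2})$ at $\xi = \alpha^{-1}$ and shifts the transversal by the resulting $u' \in \mathcal{T}$ directly, while you specialize at $\xi = \alpha$ and route the same computation through the (correct) observation that the proof of Lemma~\ref{lem:sameoutputs} uses only $c \in T^*_\R$.
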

\begin{proof}
The implications (\ref{thm:eff:1} $\Rightarrow$ \ref{thm:eff:2}) and  (\ref{thm:eff:6} $\Rightarrow$ \ref{thm:eff:4}) are clear.
For (\ref{thm:eff:2}$\Rightarrow$ \ref{thm:eff:4}), observe that $\left( \alpha^{-1} \cdot L^{+} \right) \cap \mathcal{T} \neq \emptyset$ since $\alpha \in S^*_\R$.
Hence there exist $u' \in \mathcal{T}$ and $r \in L^+$ such that $\alpha^{-1} = u'/r$. Pick $i_0u\in \Palpha{\Phi_b}{I}$.
Then $\alpha^{-1}i_0u$ is totally imaginary and $\Phi_b$-positive, which is equivalent to saying that $i_0uu'$ is so. Since $u' \mathcal{T}$ is another transversal of $T^*/\langle v \overline{v} : v \in T^* \rangle$, we get the required bijection.
For the implication (\ref{thm:eff:1} $\Rightarrow$ \ref{thm:eff:5}), let $\Phi_d\in \Phi_b G_S$. That is, there exists $c\in S^*_\R$ such that $d=cb$.
By assumption we also have that $c \in T^*_\R$, so we conclude by Lemma \ref{lem:sameoutputs}.
Finally, in order to prove (\ref{thm:eff:5} $\Rightarrow$ \ref{thm:eff:6}), observe that by definition $\Phi_{\alpha b}\in \Phi_bG_S$. That is, by assumption $\Pone{\Phi_d}{I}$ is in bijection with $\Pone{\Phi_{\alpha b}}{I}$ for any $\Phi_d \in \Phi_b G_S$. By Lemma~\ref{lem:PtoP'alpha} we have $\Pone{\Phi_{\alpha b}}{I} = \Palpha{\Phi_b}{I}$.
\end{proof}

\begin{remark}
To check whether the intersection $\xi L^{+} \cap \mathcal{T}$ in Theorem \ref{thm:eff}.\ref{thm:eff:2} is non-empty for every $\xi \in S_\R^*$, it is sufficient to check this for any generator $\xi$ of~$S_\R^*$.
This is a finite process; a straightforward implementation searches, for each generator $\xi$ of $S_\R^*$, whether there exists an element $u\in \mathcal{T}$ such that $u/\xi$ is totally positive.
\end{remark}

\begin{remark} \label{rmk:comp_pols}
Conditions \ref{thm:eff:1}, \ref{thm:eff:2}, and \ref{thm:eff:5} of Theorem~\ref{thm:eff} can be checked algorithmically on a computer.
If they hold, they respectively imply Conditions~\ref{thm:eff:4} and~\ref{thm:eff:6}, which do not involve the element $\alpha$.
In this way, Theorem \ref{thm:eff} yields concrete conditions under which we can compute the set $\Palpha{\Phi_b}{I}$ in terms of sets not involving the element $\alpha$. This yields an effective version of Proposition \ref{prop:comppol}.
\end{remark}

\begin{remark}\label{rem:polisogclass}
If Condition \ref{thm:eff:1} resp.~Condition \ref{thm:eff:2} of Theorem~\ref{thm:eff} holds when $T=R_w$, then it holds for every order in $L$. That is, we can compute the principal polarizations for the whole isogeny class. This is the case when the isogeny class is ordinary or almost ordinary (in odd characteristic), since in these cases we can take $S=R_w$ by Corollary \ref{cor:Snonempty}.\ref{cor:Snonempty:ord_almord}.
\end{remark}

Theorem \ref{thm:eff} gives us conditions under which we can determine the principal polarizations of a single abelian variety.
Even if these conditions are not met, we might still be able to get information about the total number of isomorphism classes of principally polarized abelian varieties in the isogeny class  for a fixed endomorphism ring.
\begin{thm}\label{thm:sameoutput}
Assume that there are $r$ isomorphism classes of abelian varieties in $\AV_h(p)$ with endomorphism ring $T$, represented by $B_{0,1},\ldots,B_{0,r}$, say. Put $I_i=\mathcal{G}(B_{0,i})$ for $i=1,\ldots,r$. Consider the following statements:
\begin{enumerate}
    \item \label{thm:sameoutput:1} There exists a non-negative integer $N$ such that for every $\Phi'$ in $\Phi_bG_S$ we have
\[
    \vert \Pone{\Phi'}{I_1} \vert + \ldots + \vert \Pone{\Phi'}{I_r} \vert = N.
\]
    \item \label{thm:sameoutput:2} There exist non-negative integers $N_1,\ldots,N_r$ such that for every
     $\Phi'$ in $\Phi_bG_S$ the sequence
\[
    \left( \vert \Pone{\Phi'}{I_1} \vert , \ldots , \vert \Pone{\Phi'}{I_r} \vert \right)
\]
is a permutation of $(N_1,\ldots,N_r)$.
\end{enumerate}
Then Statement~\ref{thm:sameoutput:1} holds if and only if Statement~\ref{thm:sameoutput:2} holds.
If this is the case, then there exists a permutation $\sigma$ of $\{1,\ldots,r\}$ such that for each $i = 1, \ldots, r$ the abelian variety $B_{0,i}$ has $C_{\sigma(i)}$ non-isomorphic principal polarizations.
Moreover, for each $i = 1, \ldots, r$ the integer $N_i$ is either $0$ or 
$\vert T_\R^*/\langle v\bar v : v \in T^*\rangle \vert $.
\end{thm}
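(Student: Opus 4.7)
The plan is to first establish the equivalence of Statements \ref{thm:sameoutput:1} and \ref{thm:sameoutput:2}, and then deduce the assertions about the actual polarization counts and the dichotomy for the $N_i$. The direction \ref{thm:sameoutput:2} $\Rightarrow$ \ref{thm:sameoutput:1} is immediate with $N = N_1 + \cdots + N_r$, so all the content lies in the converse and in the final two claims.

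The key input for the converse is a structural dichotomy: for every CM-type $\Phi'$ and every index $i$, the cardinality $\left|\Pone{\Phi'}{I_i}\right|$ is either $0$ or the fixed constant $m := \left|T^*_\R/\langle v\overline{v} : v \in T^*\rangle\right|$. This is the conclusion of Proposition \ref{prop:comppol}, whose cardinality count rests on the fact that the ratio of any two elements of the set is a totally real unit in $T^*$; that observation is insensitive to whether the scaling element in the definition is the canonical reduction scalar or simply $1$, so the dichotomy transfers from $\Palpha{\Phi_b}{I_i}$ to $\Pone{\Phi'}{I_i}$. Crucially, $m$ is the \emph{same} integer for every index $i$ because the varieties $B_{0,1},\ldots,B_{0,r}$ all share the endomorphism ring $T$. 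Granted this dichotomy, if $\sum_i \left|\Pone{\Phi'}{I_i}\right| = N$ for every $\Phi' \in \Phi_b G_S$, then the count of indices for which the summand equals $m$ is the constant $N/m$, independent of $\Phi'$, so the multiset $\{ \left|\Pone{\Phi'}{I_i}\right| \}_{i=1}^r$ is the same for every $\Phi'$ in the orbit. Defining $(N_1, \ldots, N_r)$ by evaluating these cardinalities at a single $\Phi'$ in the orbit yields a sequence of which every other one in the orbit is a permutation; in particular each $N_i$ lies in $\{0, m\}$.

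To identify the $N_i$ with genuine polarization counts, the plan is to invoke Proposition \ref{prop:comppol} with the canonical reduction scalar $\alpha \in S^*_\R$ of Proposition \ref{prop:betteralpha}: this gives that $B_{0,i}$ has exactly $\left|\Palpha{\Phi_b}{I_i}\right|$ non-isomorphic principal polarizations. Using Lemma \ref{lem:PtoP'alpha} this rewrites as $\left|\Pone{\Phi_{\alpha b}}{I_i}\right|$, and since $\alpha \in S^*_\R$ the CM-type $\Phi_{\alpha b}$ lies in the orbit $\Phi_b G_S$. Applying the just-established permutation property of Statement \ref{thm:sameoutput:2} at $\Phi' = \Phi_{\alpha b}$ then yields the desired permutation $\sigma$ of $\{1,\ldots,r\}$ with $\left|\Pone{\Phi_{\alpha b}}{I_i}\right| = N_{\sigma(i)}$.

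The main obstacle I anticipate is the verification that Proposition \ref{prop:comppol}'s cardinality count is genuinely insensitive to whether the scaling element is the reduction scalar $\alpha$ or simply $1$, so that the dichotomy applies to the abstract sets $\Pone{\Phi'}{I_i}$ (which carry geometric meaning only when the scalar matches the reduction element of some lifting), and that the resulting constant $m$ is uniform across all $i$ and all $\Phi'$ in the orbit. Once this bookkeeping is secured, the rest of the argument is a clean pigeonhole count for \ref{thm:sameoutput:1} $\Rightarrow$ \ref{thm:sameoutput:2} together with a direct application of Lemma \ref{lem:PtoP'alpha}.
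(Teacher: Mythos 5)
Your proposal is correct and follows essentially the same route as the paper's own proof. Both arguments rest on the same two pillars: (i) the dichotomy that $\vert\Pone{\Phi'}{I_i}\vert$ is always either $0$ or $N_0 = \vert T^*_\R/\langle v\bar v : v\in T^*\rangle\vert$ for every $\Phi'$ and every $i$ (which the paper waves through with ``by definition,'' while you correctly flag that one must check the ratio-lies-in-$T^*_\R$ argument from the proof of Proposition~\ref{prop:comppol} is unaffected by replacing $\alpha$ with $1$), and (ii) identification of the actual polarization count of $B_{0,i}$ with $\vert\Pone{\Phi_{\alpha b}}{I_i}\vert$ via Proposition~\ref{prop:comppol} combined with Lemma~\ref{lem:PtoP'alpha}, together with the observation that $\Phi_{\alpha b}\in\Phi_bG_S$. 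The pigeonhole step deducing Statement~\ref{thm:sameoutput:2} from Statement~\ref{thm:sameoutput:1} is organized slightly differently --- the paper first pins down $r_0$ using $\Phi_{\alpha b}$ and then back-substitutes, whereas you observe directly that constancy of the sum forces constancy of the multiset --- but the logic is the same and your version is, if anything, a bit cleaner.
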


\begin{proof}
Observe that Statement~\ref{thm:sameoutput:2} trivially implies Statement~\ref{thm:sameoutput:1}.
For the converse, note that by definition, for any~$\Phi'$, the set $\Pone{\Phi'}{I_i}$ has cardinality equal to either $0$ or $N_0:=\vert T_\R^*/\langle v\bar v : v \in T^*\rangle \vert $.
By Proposition \ref{prop:comppol} and Lemma \ref{lem:PtoP'alpha}, there are exactly $r_0$ abelian varieties among the $B_{0,i}$ admitting exactly $N_0$ non-isomorphic principal polarizations and $r-r_0$ which are not principally polarized.
So if Statement~\ref{thm:sameoutput:1} holds, then $N=r_0N_0$, and hence Statement~\ref{thm:sameoutput:2} holds for the sequence on integers defined by 
\[
N_i = \begin{cases}
N_0 & \text{ for } i = 1, \ldots, r_0, \\
0 & \text{ for } i = r_0 + 1, \ldots, r.
\end{cases}
\]
\end{proof}

\subsection{Examples}\label{ssec:examples}
In the following example we compute the principal polarizations of all abelian varieties in a fixed non-simple almost-ordinary isogeny class.
This illustrates the use of Theorem~\ref{thm:OS} (rather than Proposition~\ref{prop:almord_lift} together with Theorem~\ref{thm:main1}, since the finite field in the example is not a prime field.). 

\begin{example} \label{ex:nonord2}
Consider the Weil polynomial $h=(x^2 + 9)(x^4 - 6x^3 + 19x^2 - 54x + 81)$. It defines an isogeny class $\AV_h(9)$ 
of almost-ordinary threefolds over $\F_9$, which split up to isogeny into a supersingular elliptic curve and an ordinary surface.
See the LMFDB \cite{lmfdb} label \href{https://www.lmfdb.org/Variety/Abelian/Fq/3/9/ag_bc_aee}{{3.9.ag\_bc\_aee}}.
One verifies that the endomorphism algebra~$L$ of the isogeny class is commutative.
We compute that there are two overorders of~$R_w$ which are maximal at the supersingular part of $L$: the maximal order $\cO_L$ and an order $S$ of index $4$ in $\cO_L$.
The algebra $L$ has an inert supersingular part,
hence $\AV_h(9)$ is partitioned into two full subcategories $\AV_{h,\Phi_1}(9)$ and $\AV_{h,\Phi_2}(9)$, where $\Phi_1$ and $\Phi_2$ are the two CM-types of $L$ satisfying the Shimura-Taniyama formula, cf.~Definition \ref{def:RRC}.\ref{def:RRC_item_st}.
By \cite[Proposition 3.3]{OswalShankar19}, cf.~Theorem \ref{thm:OS} for the non simple case, for each $i=1,2$, there is a bijection between the isomorphism classes in $\AV_{h,\Phi_i}(9)$ and the isomorphism classes of fractional $S$-ideals.
Since $S$ is Gorenstein, we see that such ideal classes are in 
\[ \Pic(S) \sqcup \Pic(\cO_L) \simeq \Z/2\Z \sqcup \{1\}.\]
Therefore, each subcategory $\AV_{h,\Phi_i}(9)$ contains three isomorphism classes of abelian varieties; two with endomorphism ring $S$ and one with endomorphism ring $\cO_L$.
We compute that for each $\Phi_i$, only one of the abelian varieties with endomorphism ring $S$ is principally polarizable, with two non-isomorphic principal polarizations, while the abelian variety with maximal endomorphism ring admits a unique principal polarization (up to polarized isomorphisms).
\end{example}

The following example shows the need for a generalized residual reflex condition, by showing that it is possible for all the simple factors of a non-simple variety to satisfy the generalized residual reflex condition of Definition~\ref{def:RRC} while the variety itself does not, as claimed in Remark~\ref{rem:genRRConsimplefactors}. 

\begin{example}\label{ex:noCCOprod}
Consider the Weil $3$-polynomials 
\[ 
h_1=x^2+3 \qquad \text{ and } \qquad h_2=x^4+9
\]
and the isogeny class $\AV_{h_1h_2}(3)$ of abelian threefolds over $\F_3$,
see the LMFDB \cite{lmfdb} label \href{https://www.lmfdb.org/Variety/Abelian/Fq/3/3/a_d_a}{{3.3.a\_d\_a}}.
Using that both number fields $K_1=\Q[x]/(h_1)$ and $K_2=\Q[x]/(h_2)$ are Galois with a unique prime above $3$, 
one verifies that all CM-types of $\AV_{h_1}(3)$ and $\AV_{h_2}(3)$ satisfy the Shimura-Taniyama formula, cf.~Condition~\ref{def:RRC_item_st} of Definition~\ref{def:RRC}.
Moreover, since $K_1$ is a quadratic number field with a unique prime above $3$ with residue class field $\F_3$, both CM-types of  $\AV_{h_1}(3)$ also satisfy Condition~\ref{def:RRC_item_refl} of Definition~\ref{def:RRC}.
For $K_2$ one computes that only two out of the four possible CM-types satisfy Condition~\ref{def:RRC_item_refl} of Definition~\ref{def:RRC}.
In particular, by Theorem \ref{thm:CCOlift}, in both isogeny classes  $\AV_{h_1}(3)$ and $\AV_{h_2}(3)$ there exists an abelian variety that admits a canonical lifting to characteristic zero.
On the other hand, our algorithm shows that all CM-types of $\AV_{h_1h_2}(3)$ satisfy Condition~\ref{def:RRC_item_st} of Definition \ref{def:RRC} but no CM-type satisfies Condition \ref{def:RRC_item_refl} of Definition \ref{def:RRC}.
\end{example}

The following examples illustrate in detail how to apply the results from Section~\ref{ssec:polcomp}. In Example \ref{ex:detailedlowprank} we fully determine the number of principally polarized abelian varieties, 
showing that in fact we can ignore the contribution given by $\alpha$ for this isogeny class.

\begin{example}
\label{ex:detailedlowprank}
Consider the isogeny class $\AV_h(3)$ defined by $x^6 - x^5 + 6x^4 - 6x^3 + 18x^2 - 9x + 27$,
see the LMFDB \cite{lmfdb} label \href{https://www.lmfdb.org/Variety/Abelian/Fq/3/3/ab_g_ag}{{3.3.ab\_g\_ag}}.
It has $p$-rank~$1$ and it factors up to isogeny into a supersingular elliptic curve and an almost-ordinary surface.
Our computation shows that there exists a CM-type $\Phi$ of the endomorphism algebra $L=\Q[x]/h$ satisfying the generalized residue reflex condition, cf. Definition \ref{def:RRC}.

The order $R_w$ has index $18$ in the maximal order $\cO_L$. 
We found that for each divisor $d$ of~$18$ there exists an unique order $S_d$ of index $d$ in $\cO_L$. In particular, by Corollary \ref{cor:Snonempty}, we can deduce that $\mathcal{S}_\Phi$ contains $\cO_L$ and $S_2$. 
By applying Theorem~\ref{thm:eff}.\ref{thm:eff:2} we compute the principal polarizations of the isomorphism classes with endomorphism ring $S_2$ and $S_1=\cO_L$, and find that each is the endomorphism ring of two non-isomorphic abelian varieties admitting a unique principal polarization (up to polarized isomorphism).

Since Theorem \ref{thm:eff}.\ref{thm:eff:2} fails for the other overorders, we attempt to use Theorem \ref{thm:eff}.\ref{thm:eff:4}.
It turns out that the orbit $\Phi G_{S_2}$ consists of all eight CM-types of $L$.
Our computation returns that for every $\Phi'\in \Phi G_{S_2}$ and for every fractional ideal~$I$ with $(I:I)=S_6$ or $(I:I)=S_3$ the set $\Pone{\Phi'}{I}$ is empty.
We conclude that there is no principally polarized abelian variety with endomorphism ring $S_6$ or $S_3$.

Next, we consider $S_{18}$. Up to isomorphism, there are eight fractional ideals $I_1,\ldots, I_8$ with $(I_i:I_i)=S_{18}$.
We observe that for any $\Phi'\in \Phi G_{S_2}$ there are precisely four classes $I_i$ such that the set $\Pone{\Phi'}{I_i}$ is empty and four for which it contains one single element.
In other words, for any $\Phi'\in \Phi G_{S_2}$, the integer tuple
\begin{equation}
\label{eq:output_seq}
    \left( \vert \Pone{\Phi'}{I_1} \vert, \ldots, \vert \Pone{\Phi'}{I_8} \vert \right)
\end{equation}
is a permutation of 
\[ (1,1,1,1,0,0,0,0). \]

This means that we cannot apply Theorem \ref{thm:eff}.\ref{thm:eff:4} to compute the principal polarizations of a single $I_i$. 
But by Theorem \ref{thm:sameoutput} we can conclude that $4$ of the $8$ isomorphism classes with endomorphism ring $S_{18}=R_w$ admit a unique principal polarization (up to polarized isomorphism).
Applying the same reasoning to the order $S_9$ we can also deduce that $4$ of the $8$ isomorphism classes with endomorphism ring $S_{18}=R_w$ admit a unique principal polarization (up to polarized isomorphism).
\end{example}

Contrary to Example~\ref{ex:detailedlowprank}, in Example~\ref{ex:x8+16} below, we will see that we cannot ignore the contribution given by $\alpha$ for all endomorphism rings. Indeed, we find a finite set of possibilities for some endomorphism rings.

\begin{example}
\label{ex:x8+16}
Consider the isogeny class $\AV_h(2)$ defined by $h=x^8+16$,
see the LMFDB~\cite{lmfdb} label \href{https://www.lmfdb.org/Variety/Abelian/Fq/4/2/a_a_a_a}{{4.2.a\_a\_a\_a}}. 
The number field $L=\Q[x]/(h)=\Q(F)$ is Galois, and it is easy to verify that the residue field of $\cO_L$ at the unique prime above $2$ is $\F_2$. In particular we deduce that all CM-types satisfy the residual reflex condition.
In Table \ref{tab:x8_16} we summarize the output of the computation of the principal polarizations. 
The order $R_w=\Z[F,V]$ has $19$ overorders.
Each overorder $T$ is generated by $1$ and the conductor $(T:\cO_L)$, which is always a principal $\cO_L$ ideal.
Moreover, the index of each $T$ in $\cO_L$ is a power of $2$. 
Hence, we deduce that the set $\mathcal{S}$ contains only $\cO_L$. Notice that $15$ overorders $T$ satisfy $T = \overline{T}$.
Among those, only $3$ overorders satisfy Theorem \ref{thm:eff}.\ref{thm:eff:2}, and for $7$ others we can compute the number of isomorphism classes of principally polarized abelian varieties using Theorem \ref{thm:sameoutput}.
For the remaining $5$ overorders we have two possibilities. 
For example, there are $4$ (unpolarized) isomorphism classes of abelian varieties with endomorphism ring $R_w$: either all of them are not principally polarizable, or two of them admit $4$ non-isomorphic principal polarizations each while the other two are not principally polarizable.

This shows that $\alpha$ is a real obstruction in the following sense. We can construct a lifting using any of the CM-types (since they all satisfy the residual reflex condition). However, if we could put $\alpha=1$ for all these different CM-types, then we would have contradictory answers for the number of principally polarized abelian varieties in this isogeny class. 

\begin{table}[ht]
    \centering
    \small
    \bgroup
    \def\arraystretch{1.2}
    \begin{tabular}{|c|c|c|c|c|}\hline
order $T$ & $[\cO_L:T]$ & $T = \bar T$ & \ref{thm:eff}.\ref{thm:eff:2}$(T)$ & number of ppav \\ \hline
$ \cO_L $ & 1 & true & true & 1\\ \hline
$ \Z + \left(1+\frac 1 8 F^6\right)\cO_L $ & 2 & true & true & 1 \\ \hline
$ \Z + \frac 1 4F^5\cO_L $ & 4 & true & false & 0 \\\hline
$ \Z + \left(1+\frac 1 4F^4\right)\cO_L $& 4 & true & true &  1 \\\hline
$ \Z + \left(1-\frac 1 2F+\frac 1 4F^3- \frac 1 8F^5+\frac 1 8F^6+ \frac{1}{16}F^7\right)\cO_L $ & 4 & true & false & 0+0 \\ \hline
$ \Z + \left(\frac 1 2F^2+\frac 1 8F^6\right)\cO_L $ & 8 & true & false & 0+2+0+0 \\ \hline
$ \Z + \left(F-\frac 1 2F^3\right)\cO_L $  & 8 & false & -- & 0\\ \hline
$ \Z + \left(1+\frac 1 2F^2-\frac 1 4F^4-\frac 1 8F^6\right)\cO_L $  & 8 & false & -- & 0\\ \hline
$ \Z +  \frac 1 2F^4 \cO_L $  & 16 & true & false & 0\\ \hline
$ \Z + 2\cO_L $  & 16 & true & false & 2 or 0\\\hline
$ \Z + \left(F-\frac 1 2F^3\right)\cO_L $ & 16 & true & false & 0+0+0+0\\ \hline
$ \Z + \left(2+\frac 1 4F^6\right)\cO_L $ & 32 & true & false & 0 or 2\\\hline
$ \Z + \left(F-\frac 1 2F^3+\frac 1 4F^5-\frac 1 8F^7\right)\cO_L $ & 32 & true & false & 1+1 or 0+0\\ \hline
$ \Z +  \frac 1 2F^4 \cO_L $ & 32 & true & false & 0+2+2+0 or 0+0+0+0\\ \hline
$ \Z + \left(2-\frac 1 2F^4\right)\cO_L $ & 64 & false & -- & 0\\ \hline
$ \Z + \left(F-\frac 1 2F^3+\frac 1 4F^5+\frac 1 8F^7\right)\cO_L $ & 64 & true & false & 0+0+0+0\\ \hline
$ \Z + \frac 1 4F^7 \cO_L $ & 64 & false & -- & 0\\ \hline
$ \Z + F^3\cO_L $ & 128 & true & false & 0+0+0+0\\ \hline
$ R_w $ & 256 & true & false & 0+0+4+4 or 0+0+0+0\\ \hline
    \end{tabular}
    \egroup
    \caption{ For a conjugate stable order $T$, \ref{thm:eff}.\ref{thm:eff:2}$(T)$ is true if and only if  Theorem \ref{thm:eff}.\ref{thm:eff:2} holds for the order $T$. It is left blank if $T$ does not satisfy $T = \overline{T}$. The number of isomorphism classes of principally polarized abelian varieties with each endomorphism ring is given as an unordered sum where each summand is a distinct (unpolarized) isomorphism class. Note that in certain cases we have two possibilities.
    }
    \label{tab:x8_16}
\end{table}
\end{example}

\bibliographystyle{amsplain}
\renewcommand{\bibname}{References}
\bibliography{references}
\end{document}